\documentclass[12pt]{amsart}
\usepackage{palatino}
\usepackage{amscd,amssymb}
\usepackage{float}
\usepackage{graphicx}

\usepackage{amsmath,amssymb,amsthm,amsfonts}
\usepackage{mathrsfs}
\usepackage[margin=1in]{geometry}
\usepackage{xcolor}
\usepackage{hyperref}

\newcommand{\query}[1]{%
  \ifmmode
    \text{\footnotesize\textcolor{red}{[?: #1]}}%
  \else
    {\footnotesize\textcolor{red}{[?: #1]}}%
  \fi
}

\newtheorem{theorem}{Theorem} 
\newtheorem{proposition}[equation]  {Proposition}
\newtheorem{lemma}[equation]  {Lemma}
\newtheorem{corollary}[equation]  {Corollary}
\theoremstyle{definition}
\newtheorem{definition}[equation]  {Definition}

\newtheorem{question}[equation]  {Question}

\newcommand{\Mbar}{\overline{M}}
\newcommand{\Nbar}{\overline{N}}
\newcommand{\Pbar}{\overline{P}}
\newcommand{\Qbar}{\overline{Q}}

\newcommand{\C}{\mathbb{C}}
\newcommand{\R}{\mathbb{R}}
\newcommand{\Z}{\mathbb{Z}}
\newcommand{\sla}{\mathfrak{sl}}
\newcommand{\cL}{\mathcal{L}}
\usepackage{float}
\usepackage{graphicx}

\usepackage[cmtip,matrix,arrow]{xy}

\usepackage{graphicx}
\numberwithin{equation}{section}

\newtheorem {Question}[equation]      {Question}

 \newtheorem{zremark}[equation] {Remark}
\newenvironment{Remark} {\par\footnotesize\zremark}{~\\}{\endzremark}
\newcommand{\pr} {\smallskip\noindent{\bf Proof\,\,}}

\makeatletter
\newenvironment{reptheorem}[1]
{\def\@thmtitle{Theorem~\ref{#1}}%
\trivlist\item[\hskip\labelsep\bfseries\@thmtitle.]\itshape}
{\endtrivlist}
\makeatother

\makeatletter
\newenvironment{repprop}[1]
{\def\@thmtitle{Proposition~\ref{#1}}%
\trivlist\item[\hskip\labelsep\bfseries\@thmtitle.]\itshape}
{\endtrivlist}
\makeatother

\makeatletter
\newenvironment{repcorollary}[1]
{\def\@thmtitle{Corollary~\ref{#1}}%
\trivlist\item[\hskip\labelsep\bfseries\@thmtitle.]\itshape}
{\endtrivlist}
\makeatother
 
\newcommand     {\mute}[2] {}
\newcommand     {\printname}[1] {}

\newcommand{\labell}[1] {\label{#1}\printname{#1}}

\def \hc {\hat{c}}

\def    \ker    {\operatorname{ker}}

\def    \to     {\longrightarrow}

\def    \C      {{\mathbb C}}
\def    \R      {{\mathbb R}}

\def    \Z      {{\mathbb Z}}

\def    \cP    {{\mathcal P}}

\def    \pr     {\operatorname{pr}}

\title[The Dual Category]{The Dual DG Category to Weinstein's symplectic "category" and applications to Geometric Quantization}
\author{Jonathan Weitsman}
\thanks{ }
\address{Department of Mathematics, Northeastern University, Boston, MA 02115}
\email{j.weitsman@neu.edu}
\thanks{\today}

\begin{document}
\begin{abstract}  In Weinstein's symplectic "category", objects are symplectic manifolds and morphisms are canonical relations, which may not be composable; it is therefore only morally a category.  We construct a true differential graded category which is morally dual to Weinstein's "category".   The objects in this category are prequantum systems, associated to integral symplectic manifolds.  The morphisms are a complex of differential forms twisted by a prequantum line bundle.  We also consider the cohomology category, which turns out to be an (ordinary) linear category.  In each case, Weinstein's morphisms are associated with currents dual to the forms.  For the differential graded category, these are isotropic, or if preferred Lagrangian, currents, twisted by a section of the prequantum line bundle; in the case of cohomology, these currents are supported on {\em integral} Lagrangian submanifolds, equipped with a global covariant constant section of the prequantum line bundle.  We then apply our methods to quantization, and show that for Kahler manifolds, a quantization in this category is related to holomorphic quantization.  Along the way we show that in the case of prequantum systems, the Lefschetz action of ${\mathfrak {sl}}(2,\R)$ on differential forms, studied by Brylinski, Mathieu, Guillemin, and Tseng-Yau in the symplectic case, is promoted to an action of the superalgebra ${\mathfrak {osp}}(1|2)$ on twisted differential forms, with ${\mathfrak {sl}}(2,\R)$ as the even subalgebra.   A first application of this superalgebra action is the vanishing theorem which shows the cohomology category is supported in middle dimension.

\end{abstract}

\maketitle

\tableofcontents
\section{Introduction}\label{sec:intro}

In~\cite{Weinstein}  A.~Weinstein proposed that
the symplectic geometry of quantization should be related to a symplectic category,
whose objects are symplectic manifolds $(M,\omega)$ and whose morphisms are canonical
Lagrangian relations; in other words, if $(M,\omega)$ and $(M',\omega')$ are symplectic
manifolds, a morphism $\Lambda\colon M'\Rightarrow M$ is a Lagrangian submanifold
\[
  \Lambda \subset M\times \overline{M'},
\]
where, for a symplectic manifold $M$ given by a pair $(M,\omega),$ we write $\overline{M}$ for the pair $(M,-\omega).$

Composition in this category is morally given by composition of relations. The trouble is
that such compositions yield submanifolds only given some conditions on the intersections of
the relations to be composed; so we obtain only what Guillemin and Sternberg \cite{GS} call a ``category,'' not a true
category. Nevertheless this point of view has proved very productive; see for example
the book~\cite{GS}. As an example of the intuition behind Weinstein's "symplectic creed", 
a ``point'' in this category is a morphism $\mathrm{pt}\Rightarrow M$ (where
$\mathrm{pt}$ is the symplectic manifold consisting of a single point); that is, a Lagrangian
submanifold of $M.$  This is in agreement with quantum mechanical intuition and the uncertainty principle, which
says that if the position of a particle in three dimensions (given classically by a point $q \in \R^3$) is known, its momentum (given classically
by a point $p \in T^*\R^3|_q$ in the fibre  of the cotangent bundle $T^*\R^3$ lying above $q$) cannot be localized at all; so the
quantum mechanical state corresponding to a particle located at $q \in \R^3$ is supported on the entire Lagrangian submanifold 
$T^*\R^3|_q \subset T^*\R^3$ of the symplectic manifold $T^*\R^3$ describing the phase space of the system.\footnote{Our ordering convention is with the "input" space on the right, so that a morphism $N \Rightarrow M$ is a Lagrangian submanifold of $M \times \Nbar,$ and composition is right-to-left, as for maps.  Thus a morphism ${\rm pt} \Rightarrow M$ is a Lagrangian submanifold of $M.$  The book \cite{GS} uses the opposite convention.} 

We note that Weinstein's "category" is {\em involutive}: a morphism 
$\Lambda\colon N \Rightarrow M$ given by a Lagrangian submanifold
$\Lambda \subset M \times \overline{N}$ gives an
adjoint morphism $\Lambda^T\colon M\Rightarrow N$, given by the same Lagrangian
submanifold $\Lambda$ considered as a submanifold of $N \times \overline{M}.$

The transversality troubles of Weinstein's "category" are analogous to the type of difficulty 
encountered if one attempts to define the intersection ring of a compact oriented manifold
using intersections of compact submanifolds, as in~\cite{GP}. This gives geometric
intuition about the nature of the ring, but involves many complications in perturbing submanifolds
to intersect transversally, and then showing that the resulting intersection in homology is independent of the choices made in the 
perturbation.  While these difficulties can be overcome, a simpler method of defining the
intersection ring is to use the dual theory of cohomology, where the ring structure is natural, along with Poincar\'e duality.
This avoids all the transversality problems, and shows they are irrelevant.  The price is the loss
of the geometric intuition associated with intersection theory.

In this paper we apply a similar approach to the Weinstein "category", in the case of
symplectic manifolds arising
from prequantum systems. We construct a differential graded category, whose objects are prequantum systems, and whose morphisms are given by a complex of differential forms, twisted by sections of the prequantum line bundle.  We also consider the cohomology category of this differential graded category.
In these terms morphisms in Weinstein's category appear as currents arising from Lagrangian, in fact, isotropic, submanifolds, twisted by a section of the prequantum line bundle; for the cohomology, we obtain currents arising from {\em integral} Lagrangian submanifolds, equipped with global covariant constant sections of the prequantum line bundle.  These are the submanifolds that appear naturally in quantization.
These categories provide a natural home where Lagrangian, and in fact isotropic, submanifolds
can live, even if they are topologically trivial.  We would hope this could have applications to the
study of Lagrangian submanifolds.   One angle to study would be a type of Blattner-Kostant-Sternberg pairing for integral Lagrangian currents, which our methods show (for the dual category) has cohomological content. This may give an approach to studying Lagrangian intersections --- perhaps, for example, proving nondisplaceability theorems for integral Lagrangian submanifolds of compact prequantum systems.  We study a different application, to Geometric Quantization in the Kahler case, and show that our categories give a bridge between the holomorphic world and that of Lagrangian submanifolds.

\subsection{The dual differential graded category}

Let $(M^{2m},\omega)$ be a compact connected symplectic manifold of dimension $2m,$ equipped with a hermitian line
bundle $\cL$ with unitary connection $\nabla$ of curvature $\omega$. We write $\langle\cdot,\cdot\rangle_\cL$ for the hermitian structure
on $\cL$ and denote by $c$ the resulting contraction $c: \Omega^*(M,\cL) \times \Omega^*(M,\cL) \to \Omega^*(M).$   We denote by $\hc : \Omega^*(M,\cL^*) \otimes \Omega^*(M,\cL) \to \Omega^*(M)$
the natural contraction.

A quadruple
$(M,\omega,\cL,\nabla)$ is called a \emph{prequantum system}; note that the class $[\omega] \in H^2(M,\R)$ is necessarily
integral.\footnote{By this we mean that $[\omega] \in H^2(M,2\pi\Z)$ so that $c_1(\cL) = [\omega/2\pi].$}  We consider the complex $\Omega^*(M,\cL)$ of $\cL$-valued differential forms. The
connection $\nabla$ induces an operator
\[
  \nabla \colon \Omega^*(M,\cL) \longrightarrow \Omega^{*+1}(M,\cL)
\]
with square
\[
  \nabla^2 =- i \omega\wedge. 
\]

We write

$$L := \omega \wedge$$

\noindent for the operator on $\Omega^*(M,\cL)$ given by wedge product with the symplectic form.

Consider now the subspace
\[
  \Gamma^*(M,\cL) \subset \Omega^*(M,\cL)
\]
given by
\[
  \Gamma^*(M,\cL) = \ker L.
\]
This complex is zero in degrees less than $m$ (and also of course greater than $2m$); see Proposition \ref{prop:1.1}. On
$\Gamma^*(M,\cL)$, $\nabla^2=0$, so the pair
\[
  \bigl(\Gamma^*(M,\cL),\,\nabla\bigr)
\]
is a complex, with cohomology
\[
  D^*(M,\cL) = H^*\bigl(\Gamma^*(M,\cL),\nabla\bigr).
\]

Our dual differential graded category will be built from the complex $\Gamma^*.$  We also consider the cohomology category associated to $D^*.$  We will prove the following vanishing theorem:

\begin{reptheorem}{Lagrangian}
The cohomology $D^*(M,\cL)$ is concentrated in middle dimension $m.$ \end{reptheorem}

To see the relation of $\Gamma^*$ and $D^*$ to the Weinstein "category",  we remark that a compact oriented submanifold $C\subset M$ and a section $s\in\Gamma(\cL|_C)$ give a de Rham current $\xi_{(C,s)}$ dual
 to $\Omega^*(M,\cL)$
whose value on an element 
$\eta \in \Omega^*(M,\cL)$ is given by

\[
 \xi_{(C,s)}(\eta)=  \int_C c (s, \eta) 
\]
for all $\eta\in\Omega^*(M,\cL).$  Such a current is in the kernel of the dual of $L$ if and only if 
\[
  \int_C \omega\wedge c (s, \eta)= 0
\]
for all $\eta\in\Omega^*(M,\cL);$
in other words, if $C$ is isotropic, and, in dimension $m$,
Lagrangian, on the support of $s.$ Furthermore we will show:

\begin{repprop}{prop:1.2}
  Suppose $C \subset M$ is connected.  A nonzero current $\xi_{(C,s)}$ is closed (with respect to the dual of $\nabla$) if and only if
\begin{itemize}
  \item $C$ is isotropic,
  \item $\partial C = \emptyset$,
  \item $\nabla s = 0$.
\end{itemize}
\end{repprop}

Submanifolds satisfying these conditions are called \emph{integral isotropic submanifolds}, and play an important
role in geometric quantization, where they arise as Bohr--Sommerfeld leaves of Lagrangian
foliations, and in that case give  a basis for the quantization of $M$ in the corresponding real polarization.  The relation of these currents to the differential graded category $\Gamma^*$ and its cohomology $D^*$ is the main intuition behind this paper.\footnote{A linear category with finite dimensional morphism spaces gives rise to a structure called a {\em co-category} obtained from the same object set and the dual morphism spaces.  Since the categories $\Gamma^*, D^*$ have infinite dimensional morphism spaces, the (linear) Weinstein "category" is only morally a co-category.}

The advantage of the dual differential graded category is that composition of elements of $\Gamma^*(M,\cL)$ is
well defined, with no transversality condition.  Given a prequantum system $(M,\omega,\cL,\nabla), $ write $\Mbar$ for the prequantum system $(M,-\omega,\cL^*,\nabla^+),$ where $\nabla^+$ is the connection on the dual bundle $\cL^*$ arising from the connection $\nabla$ and the hermitian structure on $\cL.$ Suppose we are given three prequantum systems arising
from symplectic manifolds $M, N, P,$ with prequantum line bundles $\cL_M, \cL_N, \cL_P,$ and
elements\footnote{We abuse notation by considering the bundles $\cL_M, \cL_N, \cL_P$ as bundles on products of the spaces $M, N, P$ via pullback.  We will continue to do this to avoid encumbering the notation.  We hope it is clear from the context which pullbacks are intended.}
\begin{align*}
  \sigma &\in \Gamma^*(M\times\Nbar,\, \cL_M\otimes \cL_N^*),\\
  \tau   &\in \Gamma^*(N\times\Pbar,\, \cL_N\otimes \cL_P^*).
\end{align*}
We then define the composition of $\sigma$ and $\tau$ 
\[
  \sigma\circ\tau \in \Omega^*(M\times\Pbar,\, \cL_M\otimes \cL_P^*)
\]
as
\[
  \sigma\circ\tau \;=\; k_*\,\hc_N(i^*\sigma\wedge j^*\tau),
\]
where
\[
  i\colon M\times N\times P \to M\times N,\qquad
  j\colon M\times N\times P \to N\times P
\]
are the projections, $k_*$ is fibre integration along the map

\[
  k \colon M\times N\times P \to M\times P,
\]
\noindent and we write $\hc_N$ for the contraction
\[
  \cL_M\otimes \cL_N^*\otimes \cL_N\otimes \cL_P^* \longrightarrow \cL_M\otimes \cL_P^*
\]
\noindent arising from the natural contraction $\hc_N : \cL_N^*\otimes \cL_N \to \C, $ and similarly for differential forms.

To show that the composition of two elements of $\Gamma^*$ actually yields an element of $\Gamma^*$, we prove the following results about
the operation $\circ.$

\begin{reptheorem}{leibniz}
Suppose we are given three prequantum systems arising
from symplectic manifolds $M, N, P,$ with prequantum line bundles $\cL_M, \cL_N, \cL_P.$
Let $\sigma\in\Gamma^*(M\times\Nbar,\, \cL_M\otimes \cL_N^*)$ and
$\zeta\in\Gamma^*(N\times\Pbar,\, \cL_N\otimes \cL_P^*)$. Then\footnote{In this paper we write $|\sigma|$ for the degree of a differential form $\sigma$ (with values in any sheaf).}
\[
  \nabla(\sigma\circ\zeta) \;=\; \nabla\sigma\circ\zeta \;+\; (-1)^{|\sigma|}\sigma\circ\nabla\zeta.
\]
\end{reptheorem}
As a corollary we have

\begin{repcorollary}{Lcomp}
Suppose we are given three prequantum systems arising
from symplectic manifolds $M, N, P,$ with prequantum line bundles $\cL_M, \cL_N, \cL_P.$
Let $\sigma\in\Gamma^*(M\times\Nbar,\, \cL_M\otimes \cL_N^*)$ and
$\zeta\in\Gamma^*(N\times\Pbar,\, \cL_N\otimes \cL_P^*)$. Then
\[
 L(\sigma\circ\zeta) \;=\; L\sigma\circ\zeta \;+ \sigma\circ L \zeta.
\]
\end{repcorollary}

Thus

\begin{repcorollary}{compos} 

Suppose we are given three prequantum systems arising
from symplectic manifolds $M, N, P,$ with prequantum line bundles $\cL_M, \cL_N, \cL_P,$ and
elements
\begin{align*}
  \sigma &\in \Gamma^*(M\times\Nbar,\, \cL_M\otimes \cL_N^*),\\
  \tau   &\in \Gamma^*(N\times\Pbar,\, \cL_N\otimes \cL_P^*).
\end{align*}

Then the composition 

\[
  \sigma\circ\tau \in \Omega^*(M\times\Pbar,\, \cL_M\otimes \cL_P^*)
\]

lies in $ \Gamma^*(M\times\Pbar,\, \cL_M\otimes \cL_P^*).$\end{repcorollary}

The following result shows the composition map $\circ$ is associative:

\begin{repprop}{assoc}
Suppose we are given four prequantum systems corresponding to symplectic manifolds $M, N, P,Q, $ with prequantum line bundles $\cL_M, \cL_N, \cL_P,\cL_Q.$  Then for any
\(\sigma \in \Gamma^*(M\times\Nbar,\, \cL_M\otimes \cL_N^*)\),
\(\tau   \in \Gamma^*(N\times\Pbar,\, \cL_N\otimes \cL_P^*)\), and
\(\rho   \in \Gamma^*(P\times\Qbar,\, \cL_P\otimes \cL_Q^*)\);
we have
\[
  (\sigma\circ\tau)\circ\rho \;=\; \sigma\circ(\tau\circ\rho).
\]
\end{repprop}

Unlike the Weinstein "category", where the diagonal Lagrangian in $M \times \Mbar$ is the identity morphism, our category does not have such a morphism.  To get around this, simply adjoin to the morphisms arising from $\Gamma^*$ an identity morphism $1_M$ for every symplectic manifold; in fact, one may as well adjoin a morphism $S_M$ for every symplectomorphism $S : M \to M$ preserving the hermitian line bundle $\cL$ and the connection 
$\nabla.$\footnote{Such symplectomorphisms are sometimes called {\em quantomorphisms}.}

We also have a form of involutivity:  For any two  prequantum systems arising
from symplectic manifolds $M, N $ with prequantum line bundles $\cL_M, \cL_N, $\ and given $\sigma\in\Gamma^*(M \times\Nbar,\, \cL_M\otimes \cL_N^*)$, we write
\[
  \sigma^T \in \Gamma^*(N\times\Mbar,\, \cL_N\otimes \cL_M^*),
\]
for the form arising from $\sigma$ via the map $M \times N \to N \times M$ exchanging $M$ and $N$ and the hermitian structures on $\cL_M$ and $\cL_N.$  It is then clear that $(\sigma^T)^T = \sigma$ for any  $\sigma\in\Gamma^*(M \times\Nbar,\, \cL_M\otimes \cL_N^*).$  This involutivity is graded-contravariant with respect to the operation $\circ:$  If $ \sigma \in \Gamma^*(M\times\Nbar,\, \cL_M\otimes \cL_N^*),
\tau   \in \Gamma^*(N\times\Pbar,\, \cL_N\otimes \cL_P^*)$ as above, then

$$(\sigma \circ \tau)^T = (-1)^{|\sigma||\tau|} \tau^T \circ \sigma^T.$$

We summarize these results in the following

\begin{reptheorem}{gammacat}
There exists a $\Z_2$-graded, involutive differential graded category $\Gamma^*$ whose objects are prequantum systems $(M,\omega,\cL,\nabla)$, and whose
morphisms $N\Rightarrow M$ between two such systems $(M,\omega_M,\cL_M,\nabla_M)$ and $(N,\omega_N,\cL_N,\nabla_N)$ are given by the complex
\[
  \Gamma^*(N,M) := \Gamma^*(M\times\Nbar,\, \cL_M\otimes \cL_N^*)
\]
with the composition law~$\circ.$  
\end{reptheorem}

\begin{Remark} The fact that $\Gamma^*$ is only a $\Z_2$-graded category is due to the fact that the composition operation $\circ$ is not of degree zero:   If $ \sigma \in \Gamma^a(M\times\Nbar,\, \cL_M\otimes \cL_N^*),
\tau   \in \Gamma^b(N\times\Pbar,\, \cL_N\otimes \cL_P^*),$ then $\sigma \circ \tau \in \Gamma^{a+b -2n} (M\times\Pbar,\, \cL_M\otimes \cL_P^*).$ It is possible to regrade the complex $\Gamma^*$ and add a sign to the composition operation $\circ$ to obtain a $\Z$-graded differential graded category.  See Theorem \ref{gammacat} [Second version] of Section \ref{prom}.
\end{Remark}
Note that if ${\rm dim~} M = 2m , $ ${\rm dim} ~N = 2n,$ and ${\rm dim}~ P = 2p, $ compositions of elements in $\Gamma^{m + n} (M,N)$ with elements
in $\Gamma^{n+p}(N,P)$ give elements of $\Gamma^{m + p}(M,P)$; so that the category
$\Gamma^*$ contains a subcategory with morphisms
\[
  \Gamma_{Lag}(N,M) := \Gamma^{m+n} (M^{2m} \times\Nbar^{2n},\, \cL_M\otimes \cL_N^*)
\]
corresponding to (duals of) Lagrangian submanifolds.   The involution of the differential graded category, which is graded contravariant, is contravariant in the linear sense if the middle dimensional forms are even, that is, if the dimension of the manifold is a multiple of four.

\begin{Remark}\labell{meta} It is natural to ask whether this anomaly in the involution is related to the metaplectic correction that arises in geometric quantization.\end{Remark}

\begin{reptheorem}{laggammacat}
There exists a linear category $\Gamma_{Lag}$ whose objects are prequantum systems $(M^{2m},\omega,\cL,\nabla)$, and whose
morphisms $N^{2n}\Rightarrow  M^{2m} $ between two such systems $(M^{2m},\omega_M,\cL_M,\nabla_M)$ and $(N^{2n},\omega_N,\cL_N,\nabla_N)$ are given by the vector space
$$\Gamma_{Lag} (N^{2n},M^{2m}): =\Gamma^{m+n}(M\times\Nbar,\, \cL_M\otimes \cL_N^*)
$$
with the composition law~$\circ.$  Restricted to manifolds whose dimension is a multiple of $4,$ this category is involutive.
\end{reptheorem}

Theorem \ref{leibniz} shows that this structure descends to the cohomology $D^*;$ recall that by Theorem \ref{Lagrangian}, this is supported in middle dimension. 

\begin{reptheorem}{dcat}
There exists a linear category $D^*$ whose objects are prequantum systems $(M^{2m},\omega,\cL,\nabla)$, and whose
morphisms $N\Rightarrow M$ between two such systems $(M^{2m},\omega_M,\cL_M,\nabla_M)$ and $(N^{2n},\omega_N,\cL_N,\nabla_N)$ are given by the cohomology
\[
   D_{Lag}(N,M): = D^{m+n}(M\times\Nbar,\, \cL_M\otimes \cL_N^*)
\]
with the composition law~$\circ.$  Restricted to manifolds whose dimension is a multiple of $4,$ this category is involutive.
\end{reptheorem}

In Section~\ref{reltow} we will show that morally, the composition law in these
dual categories has a close resemblance, using the currents $\xi_{(C,s)}$, to Weinstein's composition
law. This justifies the use of the term "dual category".

As a first application of our methods, we use our construction to study Geometric Quantization, and to
investigate invariance of polarization.

\subsection{Application to geometric quantization}

Geometric quantization would assign to a prequantum system $(M,\omega,\cL,\nabla)$ a vector
space $Q(M)$ with ``good'' properties, not exactly specified but understood in many
examples. In these examples the quantization $Q(M)$ is obtained from the space of sections $\Gamma(\cL)$
by ``cutting out'' a subspace, which for $M$ compact is expected to be finite dimensional.
To do so, the quadruple $(M,\omega,\cL,\nabla)$ is supplemented by a polarization; two examples
are given by a Kahler structure on $(M,\omega)$ and a holomorphic structure on $(\cL,\nabla)$,
or, alternatively, by a Lagrangian foliation given by a map
\[
  \pi\colon M^{2m} \to B^m
\]
for some $B$.  In the Kahler case, we take $Q(M) = H^0(M,\cL),$ the space of holomorphic sections of $\cL;$ in the Lagrangian case,
$Q(M)$ consists of distributional sections supported on the integral leaves $\pi^{-1}(b)$
of the foliation, called in this context Bohr-Sommerfeld leaves, corresponding to points $b\in B$ lying in a finite subset $B_{BS}\subset B$, and generated by the covariant constant section of
$\cL|_{\pi^{-1}(b)}$. Remarkably, in many cases the quantization turns out to be independent
of the polarization, although the relation between Lagrangian foliations and complex
structures is obscure, to say the least.

We have seen that our $D^m(M^{2m},\cL)$ is dual to integral Lagrangian submanifolds. The
following theorem relates this space to Kahler quantization.

\begin{reptheorem}{kahlergq}
 
Let $(M,\omega)$ be a compact Kahler manifold, and let $(\mathcal{L},\nabla)$ be a
hermitian holomorphic line bundle with Chern connection of curvature $\omega.$ Let $k$ be sufficiently large
and write $\nabla_k$ for the connection on $\cL^k$ induced by $\nabla.$  Define
\[
  Q(M,\cL^k ) \;=\; \ker(\nabla_k)\cap \ker(\nabla_k^*) \;\subset\; \Gamma^*(M,\mathcal{L}^k).
\]

\noindent where we continue to write $\nabla_k$ for the operator induced by the connection $\nabla_k$ on $\Gamma^*(M,\mathcal{L}^k)$ and $\nabla_k^*$ for its adjoint.
Then
\[
  Q(M,\cL^k) \;\simeq\; H^0(M,\, \mathcal{L}^k \otimes K_M  ),
\]
where $K_M$ is the canonical bundle of $M$.
 
\end{reptheorem}

Along the way we find that the twisting of the de Rham operator on a symplectic  manifold arising from a prequantum system gives rise to an extension of the
Lefschetz $\sla(2,\R)$ action on differential forms on a symplectic manifold to that of the superalgebra ${\mathfrak {osp}}(1|2)$ on twisted differential forms. As a first application we have a vanishing theorem (Theorem \ref{Lagrangian}) for the cohomology $D^*(M,\cL).$  We hope that this twisted version
of symplectic Hodge theory \cite{B,G,M,TY} 
will give further insight into geometric quantization, beyond the Kahler case.  

The rest of the paper is structured as follows.  In Section \ref{osp}, we study the ${\mathfrak {sl}}(2,\R)$ action on twisted differential forms, and its extension in the prequantum case to an action of ${\mathfrak {osp}}(1|2).$  We then review in Section \ref{sec:weinstein} the Weinstein "category" and also the case of integral Lagrangian morphisms.  The next Section, Section \ref{sec:complex}, is the core of the paper, where we define the dual categories $\Gamma^*$ and $D^*$ and study their properties.  Finally in Section \ref{gq} we give an application of our methods to Geometric Quantization, and show that in the Kahler case, our category gives a bridge between the Lagrangian and holomorphic worlds.  We conclude with some conjectures and questions that arise from this application.

\section{The Lefschetz ${\mathfrak {sl}}(2,\R)$ and the prequantum ${\mathfrak {osp}}(1|2)$}\labell{osp}

Let $(M^{2m},\omega)$ be a compact connected symplectic manifold of dimension $2m.$
  The ring of differential forms $\Omega^*(M)$ is equipped with a symplectic Hodge star type operator we denote by $\star_s$ \cite{B,G,M,TY},\footnote{In the following we use the conventions of Tseng-Yau \cite{TY}.} satisfying

\begin{equation}\labell{ssq}(\star_s)^2 = 1.\end{equation} 

On $\Omega^*(M)$ we also have the operator $$L =  \omega \wedge$$ and the operator $$\Lambda = \Pi \lrcorner$$ where  $\Pi$ is the Poisson bivector field associated to $\omega,$ and $\lrcorner$ is interior product.  We note from \cite{TY} that $\Lambda$ and $L$ are related by

\begin{equation}\labell{lam}\Lambda = \star_s L \star_s.\end{equation} 

We then have

$$[\Lambda,L]= H,$$

\noindent where $H$ is the (shifted, negative) grading operator defined by

\begin{equation} \labell{hdef} H|_{\Omega^k(M)} = (m-k) 1.\end{equation}

Since $H$ is a (shifted, negative) grading operator, and $L, \Lambda$ shift degrees by $\pm 2,$ we have

$$[H,L] = -2L$$

\noindent and

$$[H,\Lambda] = 2 \Lambda$$

\noindent Thus the operators $L,\Lambda,H$ generate a representation of ${\mathfrak {sl}}(2,\R).$  This is  the symplectic construction of the Lefschetz algebra \cite{B,G,M,TY}. In the Kahler case, one may write $\Lambda = L^*$, where $L^*$ denotes the metric adjoint of $L.$  See e.g. \cite{gh,Wells}.

Suppose now that $M$ is equipped with a hermitian line bundle $\cL$ with connection $\nabla$ of curvature $\omega.$  Then the operators $\star_s,L,\Lambda,H$ can be defined in the same way also on the differential forms $\Omega^*(M,\cL),$ and satisfy the same relations.  We abuse notation slightly by continuing to denote the resulting operators by $\star_s,L, \Lambda, H.$

In addition, in the prequantum case, we have the operator $\nabla: \Omega^*(M,\cL) \to \Omega^{*+1}(M,\cL)$ arising from the connection, which satisfies

$$\{\nabla,\nabla\} = - 2 iL$$

\noindent where we have denoted by $\{\cdot, \cdot\}$ the graded bracket  $\{A, B\} = AB + BA.$

Let us write, following \cite{TY} 

\begin{equation}\labell{nablalam} \nabla^\Lambda = [\nabla,\Lambda].\end{equation}

We will show that

\begin{lemma}\labell{nablalambda}

The operator $\nabla^\Lambda$ satisfies

$$ \nabla^\Lambda|_{\Omega^k(M,\cL) }= (-1)^{k+1} \star_s \nabla \star_s.$$

\end{lemma}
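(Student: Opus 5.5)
The plan is to reduce Lemma \ref{nablalambda} to its untwisted counterpart, the identity of Tseng--Yau \cite{TY}
$$[d,\Lambda]\big|_{\Omega^k(M)} = (-1)^{k+1}\star_s d \star_s ,$$
which we take as known in the conventions we are following. Both sides of the claimed identity are first-order differential operators on $\Omega^*(M,\cL)$, so it suffices to check it locally. On an open set $U$ where $\cL$ has a unitary frame $e$, write $\nabla(\beta\otimes e) = (d\beta + A\wedge\beta)\otimes e$ for a purely imaginary $1$-form $A$ with $dA = -i\omega$. The operators $\star_s$ and $\Lambda$ are $C^\infty(M)$-linear and act only on the form factor, so under this trivialization they become the usual operators on $\Omega^*(U)\otimes\C$. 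Hence
$$[\nabla,\Lambda] = [d,\Lambda] + [A\wedge,\Lambda], \qquad \star_s\nabla\star_s = \star_s d\star_s + \star_s (A\wedge)\star_s .$$
Since the $d$-parts match by Tseng--Yau, it remains to prove the purely algebraic identity
$$[\alpha\wedge,\Lambda]\big|_{\Omega^k} = (-1)^{k+1}\star_s(\alpha\wedge)\star_s$$
for every (complex) $1$-form $\alpha$.

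Rather than verifying this identity in a Darboux basis and tracking signs, I would extract it from the Tseng--Yau identity itself by a Leibniz argument. Take $\beta\in\Omega^k(M)$ and $f\in C^\infty(M)$. Since $\Lambda$ is $C^\infty$-linear and $d(f\beta) = df\wedge\beta + f\,d\beta$, we get $[d,\Lambda](f\beta) = f[d,\Lambda]\beta + [df\wedge,\Lambda]\beta$. Similarly, because $\star_s$ is $C^\infty$-linear,
$$\star_s d\star_s(f\beta) = f\star_s d\star_s\beta + \star_s(df\wedge)\star_s\beta .$$
Here $\star_s\beta$ has degree $2m-k$, but the sign $(-1)^{k+1}$ depends only on $k = |\beta| = |f\beta|$. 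Applying the Tseng--Yau identity to both $f\beta$ and $\beta$ and subtracting therefore gives
$$[df\wedge,\Lambda]\beta = (-1)^{k+1}\star_s(df\wedge)\star_s\beta .$$
Both sides are pointwise (tensorial) in the $1$-form, and every covector at a point is $df_p$ for some $f$. The identity therefore holds for all real $1$-forms $\alpha$, and by complex linearity for imaginary ones such as $A$.

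Substituting $\alpha = A$ into the decomposition above gives the lemma on each trivializing open set. Since both sides are globally defined operators, they agree on $\Omega^k(M,\cL)$. The only real care needed, and the main potential obstacle, is bookkeeping: one must check that the sign $(-1)^{k+1}$ is attached to the degree of the input form (as in \cite{TY}), and that the bracket $[\nabla,\Lambda]$ is the ordinary commutator, which is correct because $\Lambda$ is even. If those conventions differed, I would fall back on a direct Darboux-basis check that $\star_s(\alpha\wedge)\star_s$ and $[\alpha\wedge,\Lambda]$ both equal, up to the stated sign, the contraction $\iota_{\Pi^\sharp\alpha}$.
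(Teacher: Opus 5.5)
Your proposal is correct and follows essentially the paper's route: reduce to the untwisted Tseng--Yau identity, extract the algebraic identity for $df\wedge$ and extend it pointwise to all $1$-forms, then use the local form $\nabla = d + A\wedge$ in a trivialization. The only cosmetic difference is in obtaining the $df\wedge$ identity: the paper conjugates by $e^{f}$, writing $d_f = e^{-f} d\, e^{f} = d + df\wedge$ and using that multiplication operators commute with $\Lambda$ and $\star_s$, while you apply the Leibniz rule directly, which is the infinitesimal form of the same computation.
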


We then have by (\ref{ssq}) and (\ref{lam}) 

$$\{\nabla^\Lambda,\nabla^\Lambda \} =  2 i\Lambda.$$

We now compute the remaining brackets in the algebra generated by the five operators $L,\Lambda, H, \nabla, \nabla^\Lambda.$  To begin with we have

$$[\nabla,L] = 0$$

$$[\nabla^\Lambda,\Lambda]=0,$$

\noindent immediate since $iL = - \nabla^2$ and $i\Lambda =  (\nabla^\Lambda)^2$ .

Lemma \ref{nablalambda}, along with (\ref{lam}) and (\ref{nablalam}) gives

$$[\nabla^\Lambda,L] = \nabla.$$

Since $H$ is a (negative, shifted) grading operator, we also have

$$[H,\nabla] = - \nabla$$

\noindent and

$$[H,\nabla^\Lambda] = \nabla^\Lambda.$$

Finally we compute

$$\{\nabla,\nabla^\Lambda\} = \{\nabla, [\nabla,\Lambda]\} = \nabla \nabla \Lambda - \nabla \Lambda \nabla + \nabla \Lambda \nabla - \Lambda \nabla \nabla = [-iL,\Lambda] = iH.$$

Thus we have proved

\begin{lemma}\label{comms}
We have the following relations:

\begin{enumerate}
\item $ [\Lambda,L ] = H.$
\item $[H,L] = -2L.$
\item $[H,\Lambda] = 2\Lambda.$
\item $[L,\nabla] = 0.$ 
\item $[\Lambda,\nabla^\Lambda] = 0.$
\item $[ \nabla,\Lambda] = \nabla^\Lambda.$
 \item $[\nabla^\Lambda,L] = \nabla.$
\item $[H,\nabla] = - \nabla.$
\item $[H,\nabla^\Lambda] = \nabla^\Lambda.$
\item $\{\nabla,\nabla^\Lambda \} = iH.$
\item $\{\nabla,\nabla\} = - 2 i L.$
\item $\{\nabla^\Lambda,\nabla^\Lambda\} = 2 i \Lambda.$
\end{enumerate}
 \end{lemma}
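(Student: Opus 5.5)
Most of the twelve relations are already derived in the text preceding the lemma, so the plan is to collect those derivations and record the justification for each item, relying on Lemma \ref{nablalambda} together with (\ref{ssq}), (\ref{lam}) and (\ref{nablalam}).

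\emph{Lefschetz relations.} Items (1)--(3) are the Lefschetz $\sla(2,\R)$ relations. For $\Omega^*(M)$ they hold by \cite{TY}. They transfer verbatim to $\Omega^*(M,\cL)$, because $L$, $\Lambda$ and $H$ are $C^\infty(M)$-linear and act only on the form factor. Concretely, in a local unitary frame of $\cL$ they agree with the untwisted operators.

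\emph{Grading relations.} Items (8) and (9) follow from the fact that $\nabla$ raises degree by one and $\nabla^\Lambda$ lowers it by one, since $\Lambda$ has degree $-2$. Applying $H|_{\Omega^k}=(m-k)$ on both sides gives $[H,\nabla]=-\nabla$ and $[H,\nabla^\Lambda]=\nabla^\Lambda$.

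\emph{Relations involving $\nabla$ directly.}
\begin{itemize}
\item Item (11) is the curvature identity $\nabla^2=-i\omega\wedge$.
\item Item (4) follows from (11): $L=i\nabla^2$ commutes with $\nabla$. Alternatively, use the Leibniz rule together with $d\omega=0$.
\item Item (6) is the definition (\ref{nablalam}).
\item Item (10) is the displayed computation: expand $\{\nabla,[\nabla,\Lambda]\}=\nabla^2\Lambda-\Lambda\nabla^2$, then use (11) to get $-i[L,\Lambda]=iH$ by (1).
\end{itemize}

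\emph{Relations needing the symplectic star.} For item (12), write $\nabla^\Lambda=\pm\star_s\nabla\star_s$ using Lemma \ref{nablalambda}. The composite $\nabla^\Lambda\nabla^\Lambda$ on $\Omega^k$ then involves degrees $k$ and $k-1$, which carry opposite signs: $(-1)^{k}\cdot(-1)^{k+1}=-1$. Hence, using $\star_s^2=1$,
$$(\nabla^\Lambda)^2=-\star_s\nabla^2\star_s=i\star_s L\star_s=i\Lambda$$
by (\ref{lam}). Item (5) then follows as $\Lambda=-i(\nabla^\Lambda)^2$ commutes with $\nabla^\Lambda$.

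For item (7), conjugate item (6) by $\star_s$ with the same sign bookkeeping. This gives $\star_s[\nabla,\Lambda]\star_s=\pm[\star_s\nabla\star_s,L]$. Using Lemma \ref{nablalambda}, the left-hand side is $\pm\star_s\nabla^\Lambda\star_s$, which is $\pm\nabla$ in the appropriate degrees, and so $[\nabla^\Lambda,L]=\nabla$ up to a sign to be checked.

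\emph{Main obstacle.} The only real difficulty is the degree-dependent sign bookkeeping in (7) and (12). Conjugation by $\star_s$ sends $\Omega^k$ to $\Omega^{2m-k}$, and the signs $(-1)^{k+1}$ must be tracked across this degree change. I would check this first by a direct local computation in Darboux coordinates with a unitary frame, where $\nabla=d-i\alpha$ with $d\alpha=\omega$. Alternatively, apply Tseng--Yau's untwisted identity $[d^\Lambda,L]=d$ and note that the correction $-i\alpha\wedge$ contributes $[[\alpha\wedge,\Lambda],L]=\alpha\wedge$. This is the Lefschetz identity $[\iota_X,L]=\alpha\wedge$ for $\iota_X=[\alpha\wedge,\Lambda]$, with $X$ the vector field dual to $\alpha$ under $\omega$.
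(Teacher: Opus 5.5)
Your proposal is correct and follows essentially the same route as the paper. Items (1)--(3) come from the untwisted Lefschetz relations, (8)--(9) from the grading, (4), (5), (10) from $\nabla^2=-iL$ and $(\nabla^\Lambda)^2=i\Lambda$, and (7), (12) from Lemma~\ref{nablalambda} together with $\star_s^2=1$ and $\Lambda=\star_s L\star_s$. The sign you left open in (7) is indeed $+$, since $[\nabla^\Lambda,L]|_{\Omega^k}=(-1)^{k+1}\star_s\nabla^\Lambda\star_s=(-1)^{k+1}(-1)^{2m-k+1}\nabla=\nabla$; alternatively, the Jacobi identity avoids the sign bookkeeping altogether: $[[\nabla,\Lambda],L]=[\nabla,[\Lambda,L]]-[\Lambda,[\nabla,L]]=[\nabla,H]=\nabla$, by (1), (4) and (8).
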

 
 This can be rephrased in terms of known superalgebras as follows:
 
\begin{theorem}\labell{osp12}

Let $(M,\omega,\cL,\nabla)$ be a prequantum system. The operators $\nabla, \nabla^\Lambda, \Lambda, L, H$ give a representation of the superalgebra ${\mathfrak {osp}}(1|2)$ on the complex of twisted differential forms $\Omega^*(M,\cL).$
\end{theorem}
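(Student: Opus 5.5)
The approach is to show that the real span $\mathfrak g$ of the five operators $L,\Lambda,H,\nabla,\nabla^\Lambda$, graded by form degree mod $2$ and complexified if necessary, is a Lie superalgebra isomorphic to ${\mathfrak {osp}}(1|2)$. The representation is then simply the inclusion $\mathfrak g\subset \operatorname{End}(\Omega^*(M,\cL))$.

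\textbf{Closure.} View $\Omega^*(M,\cL)$ as a $\Z_2$-graded vector space. Then $\operatorname{End}(\Omega^*(M,\cL))$, with the graded commutator, is automatically a Lie superalgebra: super-antisymmetry and the super Jacobi identity hold because composition of operators is associative. The even operators $L,\Lambda,H$ and the odd operators $\nabla,\nabla^\Lambda$ are homogeneous for this grading. Lemma \ref{comms} lists every bracket among the five generators: the even–even brackets (1)--(3), the even–odd brackets (4)--(9), and the odd–odd brackets (10)--(12). In each case the answer is again a multiple of one of the generators. So $\mathfrak g$ is a sub-superalgebra with $\dim\mathfrak g_{\bar 0}=3$ and $\dim\mathfrak g_{\bar 1}=2$, once we check that the five operators are linearly independent. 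Independence follows because $L,\Lambda,H$ shift degree by $2,-2,0$, the odd operators shift degree by $\pm1$, and all five are nonzero when $m\ge1$.

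\textbf{Identifying the structure constants.} Set $h=-H$, $e=L$, $f=\Lambda$. Relations (1)--(3) give
$$[h,e]=2e,\qquad [h,f]=-2f,\qquad [e,f]=h,$$
so the even part is $\sla(2)$. By (8) and (9), $\nabla$ and $\nabla^\Lambda$ are $h$-weight vectors of weight $+1$ and $-1$. By (4)--(7), $e$ kills $\nabla$ and sends $\nabla^\Lambda$ to $-\nabla$, while $f$ kills $\nabla^\Lambda$ and sends $\nabla$ to $\nabla^\Lambda$. Hence $\mathfrak g_{\bar1}$ is the two-dimensional irreducible representation of $\sla(2)$.

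Next rescale the odd generators as $F_+=a\nabla$ and $F_-=b\nabla^\Lambda$, with complex constants $a,b$ chosen so that (10)--(12) become the standard relations
$$\{F_+,F_+\}=2e,\qquad \{F_-,F_-\}=-2f,\qquad \{F_+,F_-\}=h.$$
This requires $a^2(-2i)=2$, $b^2(2i)=-2$ and $ab\,i\,(-1)=1$. All three are solved by $a=b=e^{i\pi/4}$, since then $a^2=b^2=i$ and $ab=i$. With this choice the even–odd relations also take the standard form: $[e,F_-]=-F_+$ and $[f,F_+]=F_-$, because $a=b$.

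\textbf{Main obstacle and fallback.} The delicate point is checking that these constants are mutually compatible with the normalization of ${\mathfrak {osp}}(1|2)$, up to the sign conventions of whichever presentation is used. Rather than fight conventions, I would argue structurally. A five-dimensional Lie superalgebra whose even part is $\sla(2)$, whose odd part is the standard two-dimensional module, and whose odd–odd bracket is nonzero must be ${\mathfrak {osp}}(1|2)$. The reason is that the odd bracket is an $\sla(2)$-equivariant map $S^2(\C^2)\cong\sla(2)\to\sla(2)$, so by Schur's lemma it is unique up to a scalar, and any nonzero scalar can be absorbed by rescaling the odd generators. Relation (10) shows the scalar is nonzero, so $\mathfrak g\cong{\mathfrak {osp}}(1|2)$.

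The factors of $i$ mean the isomorphism is naturally with the complexification, or with an appropriate real form. I would remark on this rather than belabor it.
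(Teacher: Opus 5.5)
This is essentially the paper's proof: the paper likewise reads off Lemma~\ref{comms} and matches it, up to normalization, with the standard presentation of ${\mathfrak {osp}}(1|2)$, checking only (8)--(10) because the algebra is generated by $\nabla,\nabla^\Lambda$; your explicit normalization $a=b=e^{i\pi/4}$ and your Schur-lemma rigidity argument correctly fill in what the paper leaves implicit. One sign slip needs fixing: by (6), $[f,\nabla]=[\Lambda,\nabla]=-\nabla^\Lambda$, so $[f,F_+]=-F_-$ rather than $+F_-$ (which is exactly the standard relation, since super-Jacobi gives $2\{[f,F_+],F_+\}=[f,2e]=-2h$), and this correction leaves your structural argument untouched.
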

\begin{proof} By items (10), (11), and (12) of Lemma \ref{comms}, our algebra is generated by two odd elements $\nabla, \nabla^\Lambda$ and one even element $H.$  By items (8), (9), and (10) of Lemma \ref{comms}, these generators  satisfy (up to normalization) the relations of the generators of ${\mathfrak {osp}}(1|2)$ as in \cite{fss}, p. 75 or \cite{kac}. \end{proof}

It remains to prove Lemma \ref{nablalambda}.

\begin{proof}   First we note that for operators $d, \Lambda, \star_s$ on ordinary differential forms, which can be thought of as the differential forms with values in the trivialized bundle, with connection given by $d,$  Tseng-Yau (\cite{TY}, equation (2.4)) show 

\begin{equation} \labell{ty1}[d,\Lambda]|_{\Omega^k(M) }= (-1)^{k+1} \star_s  d \star_s. \end{equation}

\noindent For any function $f\in C^\infty(M),$ define $d_f : \Omega^*(M) \to \Omega^{*+1}(M)$ by

$$d_f \xi = e^{-f} d (e^f \xi)$$

\noindent for $\xi \in \Omega^*(M).$ Then, since multiplication operators commute with $\Lambda,$ we also have  

\begin{equation} \labell{tyn}[d_f, \Lambda]|_{\Omega^k(M) }= (-1)^{k+1}  \star_s d_f \star_s.\end{equation}

\noindent But $$d_f = d + df \wedge,$$ 

\noindent so that, subtracting (\ref{ty1}) from (\ref{tyn}), we have

\begin{equation} \labell{dfe}[ df\wedge,\Lambda ]|_{\Omega^k(M) }= (-1)^{k+1}  \star_s  (df\wedge)\star_s \end{equation}

Now, for any one form $ a \in \Omega^1(M),$ consider the operator 

$$a \wedge: \Omega^*(M) \to \Omega^{*+1}(M).$$  

This operator, and its bracket with $\Lambda,$ are determined by their pointwise values.  Since any element of the cotangent bundle at a point of $M$ can be presented as the value of $df$ at that point for some $f \in C^\infty(M),$
equation (\ref{dfe}) implies

\begin{equation} \labell{tynn} [a \wedge,\Lambda]|_{\Omega^k(M) }= (-1)^{k+1}  \star_s (a \wedge)\star_s \end{equation}

\noindent for any one form $a \in \Omega^1(M).$

But locally, in any ball in which $\cL$ is trivial, and any trivialization of $\cL$ on that ball, any connection on $\cL$ is of the form $d + a \wedge$ for some one form $a$ on the ball.  Thus, combined with (\ref{ty1}), equation (\ref{tynn}) gives the Lemma.\end{proof} 

For a first application of the ${\mathfrak {osp}}(1|2)$ action to a vanishing theorem in our category, see Theorem \ref{Lagrangian}.  We would hope that there are further applications of the representation theory of ${\mathfrak {osp}}(1|2)$ to quantization, just as the Lefschetz ${\mathfrak {sl}}(2,\R)$ has been useful in Kahler geometry.

\section{The Weinstein ``category''}\label{sec:weinstein}

Weinstein's symplectic creed motivates an attempt to define a category whose objects are
symplectic manifolds and whose morphisms are canonical Lagrangian relations~--- a morphism
$(N,\omega_N)\Rightarrow(M,\omega_M) ,$
where $M, N$ are symplectic manifolds, is a Lagrangian submanifold of $M\times \Nbar$,
where $\Nbar = (N,-\omega_N)$. The composition is the composition of relations, and the identity morphism is associated with the diagonal in $M \times \Mbar,$ which is a Lagrangian submanifold.
The composition of relations is always well defined, but the composition of two relations arising from
Lagrangian submanifolds yields a relation arising from a Lagrangian submanifold only given a composability
condition.  Thus composition is not really well defined, and we obtain only what Guillemin-Sternberg  \cite{GS} call a ``category.''

The situation is analogous to the definition of the homology intersection ring of
a compact manifold, where submanifolds representing homology classes must be deformed to be transversal in order to get a submanifold as their intersection.
The most convenient way to avoid this problem is via cohomology, and the methods of the
present paper are in some sense a symplectic geometric analog of that construction.
However, just as in algebraic topology, the geometric intuition comes from intersections of submanifolds,
despite the transversality problem.  We first give a review of Weinstein's construction and
then study it in the integral case, where the symplectic manifolds arise from prequantum systems and the canonical relations are {\em integral} Lagrangian submanifolds, that is, Lagrangian submanifolds on which the prequantum line bundle is trivial as a line bundle with connection, equipped with a global covariant constant section of this line bundle. These two cases will turn out to give the currents dual to the forms in the categories $\Gamma^*$ and its cohomology $D^*,$ which will appear in the next section.\footnote{Strictly speaking, the categories $\Gamma^*$ and $D^*$ are (morally) dual to linearized versions of the Weinstein "category".  In the case of $\Gamma^*$ the objects in this "category" are built from symplectic manifolds, and the morphisms from oriented isotropic submanifolds, equipped with arbitrary sections of the restriction of the prequantum line bundle.  In the case of $D^*$, the morphisms are built from oriented integral Lagrangian submanifolds, equipped with global covariant constant sections of the restriction of the prequantum line bundle.  See Section \ref{linwein}.}

Since this section is intended as motivation, the presentation is informal.

\subsection{Weinstein's construction}

Weinstein builds a ``category,'' whose objects are symplectic manifolds and whose
morphisms are Lagrangian submanifolds, where composition is composition of relations. Let
$M$, $N$, $P$ be symplectic manifolds, and let
\[
  \Lambda \subset M\times\Nbar, \qquad
  \Lambda' \subset N\times\Pbar.
\]

\noindent be Lagrangian submanifolds.

The composition $\Lambda\circ\Lambda' \subset M\times\Pbar$ is given by
\[
  \Lambda\circ\Lambda' \;=\; \bigl\{(m,p)\in M\times\Pbar :
  \exists\, n\text{ with } (m,n)\in\Lambda,\,(n,p)\in\Lambda'\bigr\}.
\]

The trouble is that $\Lambda\circ\Lambda'$ may not be a submanifold of $M\times\Pbar$.
To avoid this problem, we impose a geometric condition on $\Lambda, \Lambda'.$ To do this, note that the diagonal $\Delta_{\Nbar} \subset \Nbar \times N$ given by
\[
  \Delta_{\Nbar} = \{(n,n):n\in N\}
\]
is a Lagrangian submanifold of $\Nbar \times N$; and then $ \Lambda\circ\Lambda'$ is the image of 
$ (\Lambda\times\Lambda') \cap (M\times\Delta_{\Nbar} \times\Pbar) \subset M \times N \times P$ in $M \times P$
under the projection.  Then \cite[Theorem 4.1.1]{GS}:

\begin{proposition}\label{prop:3.1}
$\Lambda\circ\Lambda'$ is an immersed Lagrangian submanifold of $M\times\Pbar$ provided that
$\Lambda\times\Lambda'$ meets $M\times \Delta_{\Nbar} \times \Pbar$ transversally.
\end{proposition}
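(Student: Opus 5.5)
The plan is to work inside $X = M\times\Nbar\times N\times\Pbar$ with symplectic form $\Omega = \omega_M - \omega_N + \omega_N - \omega_P$, where the copies of $\omega_N$ are pulled back from the two $N$ factors. Set $\Lambda\times\Lambda'\subset X$, which is Lagrangian, and $Y = M\times\Delta_{\Nbar}\times\Pbar$. The submanifold $Y$ is coisotropic, since it is the product of $M\times\Pbar$ with the Lagrangian diagonal. Its characteristic foliation has leaves $\{(m,n,n,p)\}$ with $m,p$ fixed and $n$ varying, and the leaf space $Y/\!\sim$ is identified symplectically with $M\times\Pbar$. Under this identification the quotient map $\pi\colon Y\to M\times\Pbar$ is the projection, and $\Lambda\circ\Lambda' = \pi\bigl((\Lambda\times\Lambda')\cap Y\bigr)$. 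The statement is therefore the classical linear-algebra fact that the reduction of a Lagrangian by a coisotropic submanifold is Lagrangian when the intersection is clean (here, transverse).

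First I use transversality. Because $\Lambda\times\Lambda'$ meets $Y$ transversally, $Z := (\Lambda\times\Lambda')\cap Y$ is a submanifold. Its dimension is $(m+n)+(n+p)+(2m+2n+2p) - 2(m+2n+p) = m+p+\dots$; in general $\dim Z = \dim(\Lambda\times\Lambda') - \operatorname{codim} Y = (m+2n+p) - 2n = m+p$, writing $\dim M = 2m$ and so on.

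Next I show that $\pi|_Z$ is an immersion. At $z\in Z$, $\ker d\pi \cap T_zZ = T_z(\Lambda\times\Lambda')\cap (TY)^{\Omega}$, since $(TY)^\Omega$ is the tangent space to the leaf. The linear algebra runs as follows. Write $V = T_zX$, $W = T_zY$ and $L = T_z(\Lambda\times\Lambda')$. Then $(L\cap W^\Omega)^\Omega = L^\Omega + W = L + W = V$ by transversality. Hence $L\cap W^\Omega = 0$, so $\pi|_Z$ is an immersion, and $d\pi(T_zZ)$ has dimension $m+p$, which is half of $\dim(M\times\Pbar)$.

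The remaining step is isotropy. For $u,v\in T_zZ\subset W$ we have $\Omega(u,v) = (\omega_M-\omega_P)(d\pi u, d\pi v)$, because the two $N$ contributions cancel on the diagonal. This holds since the reduced form pulls back to $\Omega|_Y$. As $u,v\in L$, which is Lagrangian, this vanishes. So the image is isotropic of half dimension, and hence Lagrangian.

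The main point needing care is the kernel computation in the immersion step. The image is only an immersed submanifold, since $\pi|_Z$ need not be injective, which is exactly what the statement claims. I would not attempt to prove embeddedness.
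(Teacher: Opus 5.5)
Your argument is correct and is the standard symplectic-reduction proof. The paper gives no argument of its own: it sets up exactly your picture ($\Lambda\circ\Lambda'$ as the image of $(\Lambda\times\Lambda')\cap(M\times\Delta_{\Nbar}\times\Pbar)$ under projection) and cites \cite[Theorem 4.1.1]{GS}, and your linear algebra supplies that result — immersion from $(L\cap W^\Omega)^\Omega=L+W=V$, and isotropy from $\pi^*(\omega_M-\omega_P)=\Omega|_Y$ — with only a cosmetic slip, the stray ``$+\dots$'' in your first dimension count, which already equals $m+p$.
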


For the composition $\Lambda \circ \Lambda'$ to be a Lagrangian submanifold of $M \times \Pbar,$ we must impose an additional condition.

\begin{definition}\label{def:3.2}
\cite{GS}.
Let $\Lambda\subset M\times\Nbar$, $\Lambda'\subset N\times\Pbar$ be Lagrangian.
The Lagrangian submanifolds $\Lambda,\Lambda'$ are \emph{composable} if $\Lambda\times\Lambda'$ meets $M\times \Delta_{\Nbar} \times \Pbar$ transversally, and, in addition, for any $m\in M$, $p\in P$, there exists at
most one point $n\in N$ with $(m,n)\in\Lambda$, $(n,p)\in\Lambda'$.\footnote{The definition in \cite{GS} allows a more general composability condition, corresponding to clean intersection of $\Lambda\times\Lambda'$ with $M\times \Delta_{\Nbar} \times \Pbar,$ and with a connectedness condition replacing our unique point condition.}
\end{definition}

\begin{proposition}\label{prop:3.3}
\cite[Theorem 4.2.3]{GS}.
Suppose $\Lambda$, $\Lambda'$ are composable. Then $\Lambda\circ\Lambda'$ is a Lagrangian
submanifold of $M\times\Pbar$.
\end{proposition}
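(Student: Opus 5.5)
The plan is to pass to the transversal intersection and then project it injectively. Set
\[
  Z = (\Lambda\times\Lambda')\cap (M\times \Delta_{\Nbar}\times \Pbar) \subset M\times\Nbar\times N\times\Pbar,
\]
which we identify with a subset of $M\times N\times P$. By the transversality part of composability, $Z$ is a closed embedded submanifold. Its dimension is $\dim(\Lambda\times\Lambda') - \operatorname{codim}(M\times\Delta_{\Nbar}\times\Pbar)$, which equals $(m+n)+(n+p)-2n = m+p$. Proposition \ref{prop:3.1} already gives that the restriction of the projection $\pi\colon M\times N\times P\to M\times P$ to $Z$ is an immersion whose image $\Lambda\circ\Lambda'$ is isotropic, hence an immersed Lagrangian. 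I will take this as given.

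Next I use the second half of Definition \ref{def:3.2}. For each $(m,p)$ there is at most one $n$ with $(m,n)\in\Lambda$ and $(n,p)\in\Lambda'$, so $\pi|_Z$ is injective. An injective immersion is an embedding once it is proper, or equivalently once it is a homeomorphism onto its image. I plan to get properness from compactness. In the setting of this paper the Lagrangians arise as compact submanifolds of compact manifolds; if $\Lambda$ and $\Lambda'$ are compact (or merely closed in compact $M\times\Nbar$, $N\times\Pbar$), then $Z$ is a closed subset of a compact space. Hence $\pi|_Z$ is a closed map, and so it is a homeomorphism onto its image. It follows that $\Lambda\circ\Lambda'$ is an embedded submanifold of dimension $m+p$, and by Proposition \ref{prop:3.1} it is Lagrangian.

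The main obstacle is this embeddedness step when no compactness is available. Then an injective immersion can fail to be an embedding, as with the figure-eight type accumulation. If the intended statement allows noncompact $\Lambda,\Lambda'$, I would first try to verify properness of $\pi|_Z$ directly. Failing that, I would fall back on reading "Lagrangian submanifold" in the sense of \cite{GS}, namely injectively immersed, exactly as in \cite[Theorem 4.2.3]{GS}, where the statement is proved under clean intersection. For completeness I would also recheck the isotropy claim quoted from Proposition \ref{prop:3.1}. At a point $z\in Z$, a tangent vector of $Z$ has the form $(u,w,w,v)$ with $(u,w)\in T\Lambda$ and $(w,v)\in T\Lambda'$. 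The pullback of $\omega_M-\omega_P$ evaluated on two such vectors is
\[
  \bigl[\omega_M(u_1,u_2)-\omega_N(w_1,w_2)\bigr] + \bigl[\omega_N(w_1,w_2)-\omega_P(v_1,v_2)\bigr],
\]
and both brackets vanish because $\Lambda$ and $\Lambda'$ are Lagrangian. Together with the dimension count $m+p$, this confirms that the image is Lagrangian.
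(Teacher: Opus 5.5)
The paper gives no argument of its own for this statement; it only cites \cite[Theorem 4.2.3]{GS}, which is formulated for the more general clean-intersection notion of composability. Your proposal is a correct, self-contained proof in the transverse, single-point setting of Definition~\ref{def:3.2}. The count $\dim Z=m+p$, the injectivity of $\pi|_Z$ from the unique-point clause, your isotropy computation, and the compactness argument for embeddedness are all sound. You rely on Proposition~\ref{prop:3.1} for the immersion property, but it has a one-line proof. The symplectic orthogonal of $T(M\times\Delta_{\Nbar}\times\Pbar)$ is $0\times T\Delta_{\Nbar}\times 0$. Transversality of the Lagrangian $\Lambda\times\Lambda'$ therefore gives $T_z(\Lambda\times\Lambda')\cap\bigl(0\times T\Delta_{\Nbar}\times 0\bigr)=0$, which is exactly $\ker d(\pi|_Z)=0$.

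Your worry about embeddedness is well founded, and it reflects a defect of the statement as written rather than of your method. Definition~\ref{def:3.2} has no properness clause, and the unique-point condition does not supply one. Here is an example. Take $M=\mathrm{pt}$ and $P=\R^2$, and let $\gamma\colon\R\to\R^2$ be a figure-six injective immersion. Let $N$ be a thin neighbourhood of $\R\times\{0\}$ in $\R^2$ on which $\Phi(t,s)=\gamma(t)+s\,J\gamma'(t)$ is a local diffeomorphism, and give $N$ the form $\Phi^*\omega_P$. Set $\Lambda=\R\times\{0\}$ and let $\Lambda'$ be the graph of $\Phi$. Both are closed Lagrangians, and transversality holds because $\Lambda'$ projects submersively onto $N$. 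Injectivity of $\gamma$ gives the unique-point condition. Yet $\Lambda\circ\Lambda'=\gamma(\R)$ is not an embedded submanifold.

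So your plan to ``verify properness directly'' cannot succeed from the stated hypotheses. Properness has to be assumed, or the conclusion weakened to an injectively immersed Lagrangian. Properness is automatic for closed $\Lambda,\Lambda'$ in compact ambient manifolds, which is the setting the paper actually uses later. Compared with the bare citation, your route makes visible exactly which hypothesis gives which property. Transversality gives the immersion, the unique-point clause gives injectivity, and compactness (properness) upgrades the immersed Lagrangian of Proposition~\ref{prop:3.1} to an embedded one.
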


\begin{Remark}
The constructions in this section generalize to isotropic submanifolds (with transversality replaced in the definition of composability by clean intersection, as in \cite{GS}).
\end{Remark}

Note that in Weinstein's construction, morphisms from a point ${\rm pt},$ considered as a symplectic manifold, to a symplectic manifold $M,$ are Lagrangian submanifolds of $M.$  

\subsection{Prequantum systems and integral Lagrangian morphisms}

Now let $(M, \omega_M, \cL_M, \nabla_M)$ and $(N, \omega_N, \cL_N, \nabla_N)$ be
prequantum systems:  That is, $(M,\omega_M)$ is a symplectic manifold equipped with a hermitian line bundle $\cL_M$ with hermitian structure $\langle\cdot,\cdot\rangle_{\cL_M},$ and unitary connection $\nabla_M$ of curvature $\omega_M;$ and likewise for $N.$

Recall that a Lagrangian submanifold of a prequantum system $(M, \omega_M, \cL_M, \nabla_M)$ is {\it integral} if
$(\cL_M,\nabla_M)|_\Lambda$ is trivial as a bundle with connection, so that $\cL_M |_\Lambda$ has a
global covariant constant section $s\in \Gamma(\cL_M|_\Lambda)$. (If $\Lambda$ is connected,
$s$ is unique up to a constant multiple.)
\begin{definition}\label{def:3.5}
An \emph{integral Lagrangian morphism} $\Lambda\colon N \Rightarrow M$ of prequantum systems\newline  
$(M, \omega_M, \cL_M, \nabla_M),$  $(N, \omega_N, \cL_N, \nabla_N)$  is an
integral Lagrangian submanifold
\[
  \Lambda \subset M\times \Nbar
\]
equipped with a covariant constant section of the bundle $(\cL_M \otimes \cL_N^*)|_\Lambda.$

\end{definition}

Note that an integral Lagrangian morphism from a point ${\rm pt},$ considered as a prequantum system, to a prequantum system $M,$ is an integral Lagrangian submanifold of $M,$ equipped with a global covariant constant section of the prequantum line bundle.

We may similarly define integral isotropic morphisms.

We now show that the composition of integral Lagrangian morphisms, when they are composable, is an integral Lagrangian morphism.

\begin{proposition}\label{prop:3.7}  Let $(M, \omega_M, \cL_M, \nabla_M), $ $(N, \omega_N, \cL_N, \nabla_N)$ and $(P, \omega_P, \cL_P, \nabla_P)$ be prequantum systems.  Suppose we are given integral Lagrangian submanifolds  
 $\Lambda \subset M\times\Nbar, 
  \Lambda' \subset N\times\Pbar,$ which are composable as Lagrangian morphisms. Then $\Lambda\circ\Lambda'$ is an
integral Lagrangian submanifold of $M\times\Pbar$.

\end{proposition}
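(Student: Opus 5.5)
By Proposition \ref{prop:3.3}, $\Lambda\circ\Lambda'$ is already a Lagrangian submanifold of $M\times\Pbar$. So it remains to construct a global covariant constant section of $(\cL_M\otimes\cL_P^*)|_{\Lambda\circ\Lambda'}$. This forces the bundle with connection to be trivial there, which is the meaning of integrality.

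Write $s\in\Gamma((\cL_M\otimes\cL_N^*)|_\Lambda)$ and $s'\in\Gamma((\cL_N\otimes\cL_P^*)|_{\Lambda'})$ for the given covariant constant sections. Consider the fibre product
\[
F=(\Lambda\times\Lambda')\cap(M\times\Delta_{\Nbar}\times\Pbar)\cong\{(m,n,p):(m,n)\in\Lambda,\,(n,p)\in\Lambda'\}\subset M\times N\times P .
\]
Transversality makes $F$ a submanifold. The composability condition (at most one $n$ for each $(m,p)$), together with Proposition \ref{prop:3.1}, shows that the projection $k|_F\colon F\to\Lambda\circ\Lambda'$ is a bijective immersion of manifolds of the same dimension. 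I take it to be a diffeomorphism onto $\Lambda\circ\Lambda'$, with the submanifold structure supplied by Proposition \ref{prop:3.3}.

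On $F$, pull back $s$ and $s'$ via $i$ and $j$, and form $\sigma=\hc_N(i^*s\otimes j^*s')$, a section of the pullback of $\cL_M\otimes\cL_P^*$. Here $\hc_N\colon\cL_N^*\otimes\cL_N\to\C$ is the natural contraction, applied at the common point $n$. The key computation is that $\sigma$ is covariant constant:
\begin{itemize}
\item $\hc_N$ is parallel for the tensor product connection, since $\nabla^+$ is the dual connection.
\item Pullbacks of covariant constant sections are covariant constant for the pulled-back connections.
\item By the Leibniz rule, $\nabla\sigma=\hc_N(i^*\nabla s\otimes j^*s')+\hc_N(i^*s\otimes j^*\nabla s')=0$.
\end{itemize}
Finally, transport $\sigma$ to $\Lambda\circ\Lambda'$ via $(k|_F)^{-1}$. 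The bundle $\cL_M\otimes\cL_P^*$ on $M\times N\times P$ is by definition $k^*(\cL_M\otimes\cL_P^*)$, with the pulled-back connection. Hence a covariant constant section on $F$ descends along the diffeomorphism $k|_F$ to a covariant constant section on $\Lambda\circ\Lambda'$.

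The main obstacle is the geometric step that $k|_F$ is a diffeomorphism onto an embedded $\Lambda\circ\Lambda'$. Proposition \ref{prop:3.1} only gives an immersion, and injectivity plus immersion does not by itself give an embedding. I would handle this by relying on Proposition \ref{prop:3.3}, reading "Lagrangian submanifold" there as including that $F\to\Lambda\circ\Lambda'$ is a diffeomorphism, which holds for instance if $\Lambda,\Lambda'$ are compact, by properness.

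A secondary point is that the sections must not vanish. If $s,s'$ are nowhere zero, as they are when $\Lambda,\Lambda'$ are connected and the sections nonzero, then $\sigma$ is nowhere zero because the contraction of nonzero elements of dual lines is nonzero. The bundle is therefore trivialized as a bundle with connection on $\Lambda\circ\Lambda'$.
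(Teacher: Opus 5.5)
Your proof is correct and is essentially the paper's argument. The paper trivializes $(\cL_M\otimes\cL_N^*)\otimes(\cL_N\otimes\cL_P^*)$ on $\Lambda\times\Lambda'$, restricts to the fibre product over $\Delta_{\Nbar}$, and uses the canonical trivialization of $\cL_N^*\otimes\cL_N$ there; your contracted section $\hc_N(i^*s\otimes j^*s')$ is the same thing phrased through sections, and is exactly the paper's Proposition \ref{prop:3.8}. Your worry about $k|_F$ resolves itself: once Proposition \ref{prop:3.3} makes $\Lambda\circ\Lambda'$ an embedded submanifold of dimension $m+p$, $k|_F$ is a smooth bijective immersion between manifolds of equal dimension, hence a diffeomorphism, and no special reading of Proposition \ref{prop:3.3} is needed.
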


\begin{proof}
We know by Proposition~\ref{prop:3.3} that $\Lambda\circ\Lambda'$ is a Lagrangian
submanifold of $M\times\Pbar$. It remains to check that this submanifold is integral,
that is, that the bundle $(\cL_M \otimes \cL_P^*)|_{\Lambda \circ \Lambda'}$ is trivial as 
a bundle with connection.

Since $\Lambda$ and $\Lambda'$ are integral Lagrangian submanifolds of $M \times \Nbar,$ $N \times \Pbar,$ respectively, we know that

$$(\cL_M \otimes \cL_N^* )|_\Lambda \simeq \C$$

\noindent and

$$(\cL_N \otimes \cL_P^* )|_{\Lambda'} \simeq \C$$

\noindent are trivial as line bundles with connection.

Hence 
 
$$((\cL_M \otimes \cL_N^*) \otimes (\cL_N \otimes \cL_P^*))|_ {\Lambda \times \Lambda'} \simeq \C$$

\noindent is also trivial as a line bundle with connection.\footnote{As we noted in the introduction, we have abused notation by considering the bundles $\cL_M, \cL_N, \cL_P$ as bundles on products of the spaces $M, N, P$ via pullback.  We will continue to do this to avoid encumbering the notation.  We hope it is clear from the context which pullbacks are intended.}

Restricting to the diagonal  $\Delta_{\Nbar},$ we see that the bundle 

$$((\cL_M \otimes \cL_N^*) \otimes (\cL_N \otimes \cL_P^*))|_ {(\Lambda \times \Lambda') \cap (M \times \Delta_{\Nbar} \times \Pbar )} \simeq \C$$

\noindent is also trivial as a bundle with connection.

But on the diagonal, $\cL_N^* \otimes \cL_N \simeq \C$.\footnote{Here our abuse of notation is a bit problematic;  we have a copy of $\cL_N^*$ pulled back to $(\Lambda \times \Lambda') \cap (M \times \Delta_{\Nbar} \times \Pbar) $   from $\Lambda,$ and a copy of $\cL_N$ pulled back to $(\Lambda \times \Lambda') \cap (M \times \Delta_{\Nbar} \times \Pbar)$ from $\Lambda'.$  Restricted to $(\Lambda \times \Lambda' )\cap (M \times \Delta_{\Nbar} \times \Pbar) $ the tensor product of these two bundles is canonically the trivial bundle.}

Hence 

$$(\cL_M \otimes \cL_P^* )|_{\Lambda \circ \Lambda'} $$

is trivial as a line bundle with connection.

\end{proof}

We have an explicit formula for the covariant constant section on $(\cL_M \otimes \cL_P^* )|_{\Lambda \circ \Lambda'},$ arising from the sections on $\Lambda$ and $\Lambda',$ which follows from the isomorphism of Proposition \ref{prop:3.7}:

\begin{proposition}\label{prop:3.8}  Suppose $\Lambda, \Lambda'$ are composable integral Lagrangian morphisms as in the statement of Proposition \ref{prop:3.7}.
Suppose $s\in\Gamma((\cL_M\otimes \cL_N^*)|_\Lambda )$, $s'\in\Gamma((\cL_N\otimes \cL_P^*)|_{\Lambda'})$
are covariant constant.

Let
\begin{equation}\labell{compeq}
  (s\circ s')(m,p) \;=\; \hc_N( s(m,n) s'(n,p)),
\end{equation}
\noindent where $n\in N$ is the unique point with $(m,n)\in\Lambda$, $(n,p)\in\Lambda'$
and $\hc_N$ is the contraction $\cL_M \otimes \cL_N^* \otimes \cL_N \otimes \cL_P^* \to \cL_M  \otimes \cL_P^*$ induced by the fibrewise pairing of $\cL_N$ with its dual.

Then $s \circ s'$ is a covariant constant section of $(\cL_M \otimes \cL_P^* )|_{\Lambda \circ \Lambda'}.$
\end{proposition}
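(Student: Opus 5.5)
The plan is to realize $s\circ s'$ as the pushforward of a covariant constant section along a diffeomorphism. Set
\[
  Z=(\Lambda\times\Lambda')\cap(M\times\Delta_{\Nbar}\times\Pbar)\subset M\times N\times P .
\]
By composability (Definition~\ref{def:3.2}), $Z$ is a submanifold, since the intersection is transversal. The projection $k|_Z\colon Z\to\Lambda\circ\Lambda'$ is injective, because of the unique point condition. Proposition~\ref{prop:3.3} identifies the image as a Lagrangian submanifold. I would take $k|_Z$ to be a diffeomorphism onto $\Lambda\circ\Lambda'$, which is the embedded situation of Proposition~\ref{prop:3.3}. Then
\[
  s\circ s'=\bigl((k|_Z)^{-1}\bigr)^*\tilde s, \qquad \tilde s:=\hc_N\bigl(i^*s\otimes j^*s'\bigr)\big|_Z ,
\]
where $i,j$ are the projections of $M\times N\times P$ to $M\times N$ and $N\times P$. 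The map $\tilde s$ is a section over $Z$ of $\cL_M\otimes\cL_P^*$, pulled back from $M\times P$. So it suffices to show that $\tilde s$ is covariant constant on $Z$ with respect to the pulled-back connection. Pullback along a diffeomorphism preserves covariant constancy, and the connection on $k^*(\cL_M\otimes\cL_P^*)$ is the pullback connection, so the result transfers to $\Lambda\circ\Lambda'$.

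For the flatness of $\tilde s$, I would follow the proof of Proposition~\ref{prop:3.7}. On $\Lambda\times\Lambda'$, the section $i^*s\otimes j^*s'$ of $\cL_M\otimes\cL_N^*\otimes\cL_N\otimes\cL_P^*$ is covariant constant for the tensor product connection, by the Leibniz rule and since $\nabla s=0$ and $\nabla s'=0$. Restricting to the submanifold $Z$ preserves this. The remaining point is that the contraction $\hc_N$ intertwines connections on $Z$. Over $Z$ the two copies of $N$ coincide, so $\cL_N^*$ (pulled back via $i$) and $\cL_N$ (pulled back via $j$) are pullbacks of $\cL_N^*$ and $\cL_N$ along the same map $Z\to N$. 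The connection on $\cL_N^*$ is $\nabla^+$, the dual connection. The pairing $\cL_N^*\otimes\cL_N\to\C$ is parallel for the dual connection, since this is the defining property of $\nabla^+$: $d\langle\alpha,\beta\rangle=\langle\nabla^+\alpha,\beta\rangle+\langle\alpha,\nabla\beta\rangle$. Parallelism of the pairing is preserved under pullback along the common map. Hence $\hc_N$ is a parallel bundle map over $Z$, and it sends parallel sections to parallel sections. So $\nabla\tilde s=0$.

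The main obstacle is bookkeeping rather than analysis. The first issue is the footnoted abuse of notation: $\cL_N^*$ and $\cL_N$ arrive via different projections, and only over $Z$ do they become dual pullbacks of a single bundle, so the argument must be run on $Z$ and not on $\Lambda\times\Lambda'$. The second issue is the identification of $Z$ with $\Lambda\circ\Lambda'$, which requires $k|_Z$ to be an embedding. If Proposition~\ref{prop:3.3} only gives an injective immersion, then I would state the conclusion for the immersed Lagrangian, pulling everything back to $Z$. Covariant constancy is local, so this suffices.
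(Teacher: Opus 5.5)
Your proposal is correct and follows the paper's route: the paper obtains this proposition directly from the isomorphism built in the proof of Proposition~\ref{prop:3.7}. That isomorphism tensors the two parallel trivializations over $\Lambda\times\Lambda'$, restricts to $(\Lambda\times\Lambda')\cap(M\times\Delta_{\Nbar}\times\Pbar)$, contracts $\cL_N^*\otimes\cL_N$ there, and descends to $\Lambda\circ\Lambda'$. Your version spells out what the paper leaves implicit, namely that the pairing is parallel and that $k$ identifies this intersection with $\Lambda\circ\Lambda'$ (or, failing an embedding, that one may work on $Z$ itself).
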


\begin{Remark}  The formula (\ref{compeq}) can also be used to define composition for the enhancement of the Weinstein "category" built from Lagrangian, or isotropic, submanifolds of symplectic manifolds coming from prequantum systems, and equipped with {\it arbitrary} sections of the prequantum line bundle.  This enhanced category is the version of the Weinstein "category" giving the currents dual to our $\Gamma^*.$\end{Remark}

Note that this "category" is involutive; given an integral Lagrangian submanifold $\Lambda \subset M \times \Nbar$ along with a covariant constant section $s \in \Gamma((\cL_M\otimes \cL_N^*)|_\Lambda)$, the Lagrangian submanifold $\Lambda,$ thought of as a Lagrangian submanifold   $\Lambda^T \subset N \times \Mbar,$ comes equipped with a covariant constant section $s^T \in \Gamma((\cL_N\otimes \cL_M^*)|_{\Lambda^T}),$ arising from the hermitian metrics  on the line bundles $\cL_M, \cL_N.$

\begin{Remark} The structure of an involutive "category" does not quite exhaust the structure present in Weinstein's construction:  A Lagrangian submanifold $\Lambda \subset M \times \Nbar$ is not only a morphism from $N$ to $M,$ but also a morphism from ${\rm pt}$ to $M \times \Nbar.$  The same will be true for our dual category.  This hints at the presence of a more involved algebraic structure.
\end{Remark}

One particular case is that of a morphism $\Lambda: \mathrm{pt}\Rightarrow M$, given
by an integral Lagrangian submanifold $\Lambda\subset M,$ equipped with a global covariant constant section
$s\in \Gamma(\cL_M|_\Lambda)$.

\begin{proposition}\label{prop:3.9}
Let $\Lambda_1, \Lambda_2 \subset M$ be integral Lagrangian submanifolds of $M$,
equipped with covariant constant sections $s_1\in\Gamma(\cL|_{\Lambda_1})$, $s_2\in\Gamma(\cL|_{\Lambda_2}).$
Suppose $\Lambda_1$ and $\Lambda_2$ intersect transversally at a single point. Then the morphisms
$\Lambda_1\colon \mathrm{pt}\Rightarrow M$, $\Lambda_2^T\colon M \Rightarrow \mathrm{pt}$
are composable,
\[
  \Lambda_2^T\circ\Lambda_1 = {\rm pt} \]
and
\[
  s_2^T \circ s_1 =  \bigl\langle s_2(m),\, s_1(m)\bigr\rangle_{\cL_M},
\]
where $\{m\}=\Lambda_1\cap\Lambda_2$.
\end{proposition}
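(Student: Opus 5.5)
We specialize the composition of Proposition~\ref{prop:3.3} and the section formula (\ref{compeq}) of Proposition~\ref{prop:3.8} to the case $P=\mathrm{pt}$, $N=M$, and then the target $\mathrm{pt}$ in place of $M$. Concretely, the morphism $\Lambda_1\colon \mathrm{pt}\Rightarrow M$ is $\Lambda_1\subset M\times\overline{\mathrm{pt}}=M$. The morphism $\Lambda_2^T\colon M\Rightarrow\mathrm{pt}$ is $\Lambda_2$ regarded as a Lagrangian submanifold of $\mathrm{pt}\times\Mbar=\Mbar$. It is equipped with the section $s_2^T$ of $\cL_M^*|_{\Lambda_2}$ obtained from $s_2$ via the hermitian structure, namely $s_2^T(m)=\langle\cdot, s_2(m)\rangle_{\cL_M}$, up to the convention on which slot is conjugate-linear. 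Since $\nabla$ is unitary, $s_2^T$ is again covariant constant.

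The first step is composability in the sense of Definition~\ref{def:3.2}. The relevant intersection is
\[
(\Lambda_2^T\times\Lambda_1)\cap(\mathrm{pt}\times\Delta_{\Mbar}\times\mathrm{pt})\subset \Mbar\times M,
\]
which is $(\Lambda_2\times\Lambda_1)\cap\Delta$ and identifies with $\Lambda_1\cap\Lambda_2=\{m\}$. Transversality of $\Lambda_2\times\Lambda_1$ to the diagonal at $(m,m)$ is equivalent to $T_m\Lambda_1+T_m\Lambda_2=T_mM$, which is precisely the assumed transversality of $\Lambda_1$ and $\Lambda_2$ in $M$. I would check this dimension count by a short linear-algebra argument: a tangent vector $(u,v)$ lies in the sum of the two tangent spaces iff $u-v\in T\Lambda_2+T\Lambda_1$. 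The uniqueness requirement holds because the point $m$ is unique. Hence the morphisms are composable, and the composite relation is the set of pairs $(\mathrm{pt},\mathrm{pt})$ admitting an intermediate point, which is nonempty. Thus $\Lambda_2^T\circ\Lambda_1=\mathrm{pt}$, consistent with Proposition~\ref{prop:3.3}, since $\mathrm{pt}$ is the unique Lagrangian submanifold of $\mathrm{pt}\times\overline{\mathrm{pt}}$.

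The second step applies formula (\ref{compeq}) with the unique intermediate point $n=m$. This gives
\[
(s_2^T\circ s_1)(\mathrm{pt},\mathrm{pt})=\hc_M\bigl(s_2^T(m)\,s_1(m)\bigr)=\langle s_2(m),s_1(m)\rangle_{\cL_M},
\]
where the last equality uses that $\hc_M$ pairing $\cL_M^*$ with $\cL_M$ composed with the metric identification $\cL_M\cong\cL_M^*$ is the hermitian inner product. Proposition~\ref{prop:3.8} guarantees that this is a covariant constant section over a point, i.e.\ a constant.

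The main point requiring care is bookkeeping rather than genuine difficulty. One must keep the conventions straight (which factor is barred, right-to-left composition) to see that the transversality condition reduces exactly to transversality in $M$. One must also match the conjugate-linear slot of $\langle\cdot,\cdot\rangle_{\cL_M}$ with the definition of $s^T$ so that the stated order $\langle s_2(m),s_1(m)\rangle$ comes out.
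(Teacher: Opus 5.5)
Your proposal is correct and is the intended argument: the paper gives no separate proof, presenting the statement in its explicitly informal motivational section as the direct specialization of Definition~\ref{def:3.2}, Proposition~\ref{prop:3.3} and formula (\ref{compeq}) to $\mathrm{pt}\Rightarrow M\Rightarrow\mathrm{pt}$. You carry out exactly that specialization, including reducing transversality to the diagonal to $T_m\Lambda_1+T_m\Lambda_2=T_mM$.

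The one fix is your displayed formula for $s_2^T$. The paper takes the hermitian pairing to be antilinear in the first slot (see the footnote on $c$ in Section~\ref{sec:complex}), so the element of $\cL_M^*$ is $s_2^T(m)=\langle s_2(m),\cdot\rangle_{\cL_M}$. Only with this choice does $\hc_M(s_2^T(m)\,s_1(m))$ equal $\langle s_2(m),s_1(m)\rangle_{\cL_M}$ as stated; $\langle\cdot,s_2(m)\rangle_{\cL_M}$ is antilinear in its argument and so is not an element of $\cL_M^*$.
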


Where these Lagrangians arise from real polarizations, that is, foliations of $M$ by Lagrangian submanifolds, this is a version of the Blattner--Kostant--Sternberg pairing.

\begin{Remark}
The constructions in this section generalize to isotropic submanifolds (with transversality replaced in the definition of composability by clean intersection, as in \cite{GS}).
\end{Remark}

\subsection{Linear Weinstein "categories"}\labell{linwein}  It is also possible to construct linear versions of the two Weinstein "categories" considered in this section.  

For the case of isotropic submanifolds, consider the category whose objects are prequantum systems, and whose morphisms are given by linear combinations of oriented isotropic submanifolds, equipped with sections of the prequantum line bundle.  In more detail, given prequantum systems $(M, \omega_M, \cL_M, \nabla_M)$ and $(N, \omega_N, \cL_N, \nabla_N),$ a morphism of the linearized Weinstein "category" is a linear combination 

\[
\sum_i a_i [\Lambda_i] s_i
\]

\noindent where for each $i,$ $a_i \in \C,$ $\Lambda_i$ is an oriented isotropic submanifold of $M \times \Nbar,$  $s_i$ is a section of $(\cL_M \otimes \cL_N^*)|_{\Lambda_i},$ and, as is usual in homology, for any oriented submanifold $ \Lambda  \subset M \times \Nbar,$ we identify $-[\Lambda]$ with the submanifold $\Lambda$ with the opposite orientation.

Likewise a linearized version of the integral Weinstein "category" can be obtained by taking the category whose objects are prequantum systems, and whose morphisms are given by linear combinations of oriented integral Lagrangian submanifolds, equipped with global covariant constant sections of the prequantum line bundle.  In more detail, given prequantum systems $(M, \omega_M, \cL_M, \nabla_M)$ and $(N, \omega_N, \cL_N, \nabla_N),$ a morphism of the linearized Weinstein "category" is a linear combination 

\[
\sum_i a_i [\Lambda_i] s_i
\]

\noindent where for each $i,$ $a_i \in \C,$ $\Lambda_i$ is an oriented integral Lagrangian submanifold of $M \times \Nbar,$ $s_i$ is a global covariant constant section of $(\cL_M \otimes \cL_N^*)|_{\Lambda_i},$ and, as is usual in homology, for any oriented submanifold $\Lambda\subset M \times \Nbar,$ we identify $-[\Lambda]$ with the submanifold $\Lambda$ with the opposite orientation.

There is an obvious composability condition for elements of these categories, but where they are composable, the orientations and sections compose nicely, using for the sections the formula (\ref{compeq}).

The advantage of the linear categories is that they are more directly associated with the currents dual to elements of our categories $\Gamma^*$ and $D^*.$  We do not belabor the point, since, as we shall see, the composition of currents apparently requires more structure on the submanifolds, corresponding perhaps to the usual choice of half-forms, or half-densities, in geometric quantization.  (See Remark \ref{halfdrem}.)  We will not address that issue in this paper.

\section{The  complex $\Gamma^*$ and its cohomology $D^*$ as duals to Weinstein's category and its integral avatar }\label{sec:complex}

Let $(M^{2m},\omega)$ be a compact, connected, integral symplectic manifold. Let
$\cL\to M$ be a Hermitian complex line bundle, and $\nabla$ a unitary connection on $\cL$
with curvature $\omega$. We denote the inner product on the fibres of $\cL$ by
$\langle\,\cdot\,,\,\cdot\,\rangle_\cL$. (As we noted before, the quadruple $(M,\omega,\cL,\nabla)$ is called a
\emph{prequantum system}.) Reversing the symplectic form gives a dual prequantum system
\[
  \Mbar = (M,-\omega,\cL^*,\nabla^+).
\]

\noindent where $\nabla^+$ is the connection on $\cL^*$ arising from the connection $\nabla$ and the hermitian structure.

Given such a system, consider the differential forms
\[
  \Omega^*(M,\cL) = \Omega^*(M)\otimes \Gamma(\cL).
\]
The connection $\nabla$ induces an operator we also denote~$\nabla$
\[
  \nabla\colon \Omega^*(M,\cL) \longrightarrow \Omega^{*+1}(M,\cL),
\]
with
\[
  \nabla^2 x \;=\; -i \omega\wedge x
\]
for all $x\in \Omega^*(M,\cL)$.

Likewise the
hermitian structure on $\cL$ gives a contraction
$$c: \Omega^j(M,\cL)  \times \Omega^k(M,\cL) \to \Omega^{j+k}(M)$$
\noindent for all $j,k.$  There is also a natural contraction 

$$\hc: \Omega^j(M,\cL^*)  \otimes \Omega^k(M,\cL) \to \Omega^{j+k}(M)$$
\noindent for all $j,k.$\footnote{Note $c$ is $\C-$antilinear in the first variable and $\C-$linear in the second, while $\hc$ is $\C-$linear in both variables.}

Consider now the subspace
\[
  \Gamma^*(M,\cL) \subset \Omega^*(M,\cL)
\]
given by
\[
  \Gamma^*(M,\cL) = \ker\bigl\{\omega\wedge\cdot\bigr\}.
\]
The operator $\nabla$ preserves $\Gamma^*(M,\cL)$ and makes it into a complex.

\begin{proposition}\label{prop:1.1}
$\Gamma^k(M,\cL) = 0$ unless $m \le k \le 2m$.
\end{proposition}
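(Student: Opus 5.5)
The plan is to show that $L=\omega\wedge\cdot$ is injective on $\Omega^k(M,\cL)$ for $k<m$; this gives $\Gamma^k(M,\cL)=\ker L|_{\Omega^k}=0$ in those degrees. The upper bound $k\le 2m$ is automatic because $\dim M=2m$. The statement is pointwise: $L$ is a bundle map $\Lambda^kT^*M\otimes\cL\to\Lambda^{k+2}T^*M\otimes\cL$, and tensoring with the line $\cL_x$ does not affect injectivity. So it suffices to prove that for a symplectic vector space $(V,\omega_x)$ of dimension $2m$, wedging with $\omega_x$ is injective on $\Lambda^kV^*$ for $k<m$. This is the classical Lefschetz fact.

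I would deduce it from the $\sla(2,\R)$ relations of Lemma~\ref{comms}, items (1)--(3), applied pointwise on the finite-dimensional space $\Lambda^*V^*$. There $H$ acts on $\Lambda^kV^*$ by the scalar $m-k$. Since the representation is finite dimensional, it decomposes into irreducibles.

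In each irreducible, the $H$-weights form a string $-r,-r+2,\dots,r$, and $L$ lowers the weight by $2$. The operator $L$ kills only the bottom vector, which has weight $-r\le 0$.

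An element of $\Lambda^kV^*$ with $k<m$ has weight $m-k>0$. Any vector in $\ker L$ of positive weight therefore has zero component in every irreducible, since in each irreducible $\ker L$ is the lowest weight space, of nonpositive weight. Hence $L$ is injective on $\Lambda^kV^*$ for $k<m$.

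As an alternative, one can prove directly that $\Lambda L-L\Lambda=H$ forces injectivity. Suppose $Lx=0$ with $x$ of weight $h>0$. Then an induction gives $L\Lambda^j x$ in terms of $\Lambda^{j-1}x$ with nonzero coefficients. Since $\Lambda$ raises the degree-weight and the space is finite dimensional, $\Lambda^N x=0$ for large $N$, and descending induction then yields $x=0$. I would just cite the standard finite-dimensional $\sla(2)$ theory.

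The only step that needs care is the reduction to a pointwise linear-algebra statement, that is, checking that the twisting by $\cL$ is harmless and that the $\sla(2,\R)$ action of Section~\ref{osp} restricts to each fibre. This holds because $L$, $\Lambda$ and $H$ are all $C^\infty(M)$-linear, and so act fibrewise. I do not expect a genuine obstacle.
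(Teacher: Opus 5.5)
Your proof is correct and follows the paper's argument: decompose under the Lefschetz $\sla(2,\R)$, observe that $\ker L$ consists of lowest-$H$-weight vectors, which have nonpositive weight, and note that forms of degree $k<m$ have weight $m-k>0$. Your pointwise reduction to the finite-dimensional fibres $\Lambda^*T^*_xM\otimes\cL_x$ is a welcome addition, since it justifies the decomposition into irreducibles that the paper applies directly to $\Omega^*(M,\cL)$.
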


\begin{proof} 
On $\Omega^*(M,\cL)$ we have seen we have an action of the Lefschetz $\sla(2,\R)$
with
\begin{align*}
  L &= \omega\wedge, \\
  \Lambda &= \star_s L \star_s, \\
  H &= [\Lambda,L],
\end{align*}
where $H|_{\Omega^k(M,\cL)} = (m-k)\,\mathrm{1}$.

Thus $\Omega^*(M,\cL)$ decomposes into irreducible representations of $\sla(2,\R).$  But on each
such irreducible representation, $\ker L =\{0\}$ on subspaces of
$H$-grading greater than  $0$, that is, on differential forms below middle dimension.
\end{proof}

In middle dimension, the coisotropy and isotropy conditions are equivalent; so it is not surprising that
\begin{proposition}\labell{middimLlam}

In middle dimension we have

\[
{\rm ker}~ L|_{\Omega^m(M,\cL) } = {\rm ker}~ \Lambda |_{\Omega^m(M,\cL) }. \]
\end{proposition}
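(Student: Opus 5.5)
We argue with the Lefschetz $\sla(2,\R)$ action of Section~\ref{osp}, extended to $\Omega^*(M,\cL)$. This action is pointwise: the operators $L$, $\Lambda$, $H$ act fibrewise on $\wedge^*T^*_xM\otimes\cL_x$, a finite dimensional representation. So it suffices to prove the equality of kernels fibrewise, in the space $V^m=\wedge^m T_x^*M\otimes \cL_x$ of $H$-weight $0$. Since the statement is pointwise, no global analysis is needed.

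First decompose $\wedge^*T_x^*M\otimes\cL_x$ into irreducible finite dimensional $\sla(2,\R)$-modules; this is possible by complete reducibility. Each irreducible module of highest $H$-weight $r\ge 0$ (in the paper's sign convention, $\Lambda$ raises the $H$-weight by $2$ and $L$ lowers it by $2$) has one-dimensional weight spaces with weights $r,r-2,\dots,-r$. In such a module, $L$ kills exactly the lowest weight space (weight $-r$) and $\Lambda$ kills exactly the highest weight space (weight $r$). The standard $\sla(2)$ computation $\Lambda L^{j}v = j(r-j+1)L^{j-1}v$ for a highest weight vector $v$ shows that no other weight vector is killed.

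Now restrict to weight $0$, i.e.\ degree $m$. The weight-$0$ component of an element lies in the weight-$0$ spaces of the irreducible summands, and both $L$ and $\Lambda$ respect the decomposition. Hence $\ker L|_{V^m}$ is the sum of the weight-$0$ spaces of those summands whose lowest weight is $0$, that is, with $r=0$: the trivial summands. Likewise $\ker\Lambda|_{V^m}$ is the sum of the weight-$0$ spaces of the summands whose highest weight is $0$, again exactly the trivial summands. Both kernels therefore equal the weight-$0$ part of the isotypic component of the trivial representation (the primitive middle-degree forms), so they coincide. Since the pointwise equality holds at every $x$, the equality for smooth sections follows.

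The only point needing care is the justification of complete reducibility and weight-space structure fibrewise; this is standard for finite dimensional $\sla(2)$-modules. As an alternative that avoids decomposition, one could use $\Lambda=\star_s L\star_s$ from (\ref{lam}) together with the fact that $\star_s$ preserves degree $m$, but I expect the representation-theoretic argument to be cleaner and to mirror the proof of Proposition~\ref{prop:1.1}.
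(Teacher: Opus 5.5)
Your proof is correct and follows the same route as the paper: you decompose into irreducible $\sla(2,\R)$-modules and observe that in degree $m$ (weight $0$ for $H$) only the trivial summands contribute to $\ker L$, and $\Lambda$ also vanishes on those summands. Your fibrewise reduction (which justifies complete reducibility, since it is applied in finite dimensions) and your symmetric argument for $\ker\Lambda$ (which supplies the reverse inclusion) make precise what the paper's two-line proof leaves implicit.
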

\begin{proof} Decompose $\Omega^*(M,\cL)$ into irreducible representations of $\sla(2,\R).$  On each irreducible, the kernel of $L$ consists of highest weight vectors.\footnote{Highest weight with respect to the differential form grading--these are actually lowest weight with respect to the grading by $H.$}  Thus, in middle dimension, the only irreducible where $L$ may act trivially is the trivial representation, where $\Lambda$ acts trivially as well.  \end{proof}

\begin{definition} \labell{deriv}  We write

$$D^*(M,\cL) = H^*(\Gamma^*(M,\cL), \nabla)$$

\noindent for the cohomology of the complex $(\Gamma^*(M,\cL), \nabla).$  \end{definition}

Note that

$$D^m(M,\cL) = {\rm ker}~\nabla \subset \Gamma^m(M,\cL).$$

\begin{theorem}\labell{Lagrangian}
The cohomology $D^*(M,\cL)$ is concentrated in middle dimension $m.$ \end{theorem}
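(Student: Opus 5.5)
The plan is to use the ${\mathfrak {osp}}(1|2)$ relations of Lemma \ref{comms} to build an explicit contracting homotopy on the closed elements of $\Gamma^*(M,\cL)$ outside middle degree. By Proposition \ref{prop:1.1}, $\Gamma^k(M,\cL)=0$ for $k<m$, so $D^k(M,\cL)=0$ there automatically. It therefore suffices to show that for $m<k\le 2m$ every $\nabla$-closed $x\in\Gamma^k(M,\cL)$ is $\nabla$ of an element of $\Gamma^{k-1}(M,\cL)$.

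The natural candidate primitive is a multiple of $\nabla^\Lambda x$. Relation (10) of Lemma \ref{comms} gives $\{\nabla,\nabla^\Lambda\}=iH$. Since $H$ acts on $\Omega^k(M,\cL)$ by the scalar $m-k$, a closed $x\in\Omega^k(M,\cL)$ satisfies
\[
 i(m-k)\,x \;=\; \nabla\nabla^\Lambda x + \nabla^\Lambda\nabla x \;=\; \nabla\bigl(\nabla^\Lambda x\bigr).
\]
For $k\neq m$ this yields
\[
x=\nabla\Bigl(\tfrac{1}{i(m-k)}\nabla^\Lambda x\Bigr).
\]

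The main point to check is that the primitive $\nabla^\Lambda x$ actually lies in the subcomplex $\Gamma^{k-1}(M,\cL)=\ker L$, and not merely in $\Omega^{k-1}(M,\cL)$. For this I will use relation (7), $[\nabla^\Lambda,L]=\nabla$. Applied to $x$ it gives
\[
\nabla^\Lambda L x - L\nabla^\Lambda x = \nabla x .
\]
Both $Lx$ and $\nabla x$ vanish, since $x\in\Gamma^k$ and $x$ is closed. Hence $L\nabla^\Lambda x=0$, so $\nabla^\Lambda x\in\Gamma^{k-1}(M,\cL)$. Note that closedness of $x$ is used essentially here, because $\nabla^\Lambda$ does not preserve $\Gamma^*$ in general.

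Combining the two steps, every closed element of $\Gamma^k$ with $k>m$ is exact in $(\Gamma^*,\nabla)$, so $D^k(M,\cL)=0$ for $k\neq m$. This proves the theorem. The only delicate step is the membership $\nabla^\Lambda x\in\ker L$, which is settled by relation (7). Everything else is formal from Lemma \ref{comms} and the fact that $H$ acts by the nonzero scalar $m-k$ away from middle degree.
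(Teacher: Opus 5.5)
Your proof is correct and follows the paper's argument: the paper also uses $\{\nabla,\nabla^\Lambda\}=iH$ to write a closed $\xi\in\Gamma^k$, $k\neq m$, as $\nabla\bigl(\tfrac{-i}{m-k}\nabla^\Lambda\xi\bigr)$. Your check that $\nabla^\Lambda x\in\ker L$ via relation (7), $[\nabla^\Lambda,L]=\nabla$, spells out a step the paper only asserts in a footnote; one can also see it from $L=i\nabla^2$ together with $\nabla\nabla^\Lambda x=i(m-k)x$.
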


\begin{proof}

Suppose we are given $\xi \in \Gamma^k(M,\cL)$ with $\nabla \xi = 0.$  We have, by item (10) of Lemma \ref{comms},

$$\{ \nabla,\nabla^\Lambda\} \xi = i (m-k) \xi.$$

\noindent But since $\nabla \xi = 0,$ 

$$\{ \nabla,\nabla^\Lambda\} \xi = \nabla \nabla^\Lambda \xi.$$

\noindent Hence, as long as $k\neq m,$ 

$$\xi =\nabla \left(\frac {-i}{(m-k)} \nabla^\Lambda \xi\right)$$

\noindent so that $\xi$ is exact.\footnote{Note $\frac {-i} {(m-k)} \nabla^\Lambda \xi \in {\rm ker~} L$ since $\xi \in {\rm ker}~ \nabla.$}  \end{proof}

\subsection{The complex $\Gamma^*$ and the cohomology $D^*$ as categories}\label{sec:ch2}

Let 

$$(M^{2m}, \omega_M, \cL_M, \nabla_M),  (N^{2n}, \omega_N, \cL_N, \nabla_N), (P^{2p}, \omega_P, \cL_P, \nabla_P)$$
\noindent denote three prequantum systems.  

Denote by
\begin{align*}
  i &\colon M\times N\times P \to M\times N,\\
  j &\colon M\times N\times P \to N\times P,\\
  k &\colon M\times N\times P \to M\times P,
\end{align*}
the projections, and let
\[
  k_* \colon \Omega^*(M \times N \times P,\cL_M \otimes \cL_P^*)\to \Omega^*(M \times P,\cL_M \otimes \cL_P^*)
\]
denote fibre integration.\footnote{Note that a symplectic manifold comes with a natural orientation.} 

Given sections
\[
  s\in \Gamma\bigl(i^*(\cL_M\otimes \cL_N^*)\bigr),\qquad
  s'\in\Gamma\bigl(j^*(\cL_N\otimes \cL_P^*)\bigr),
\]
we obtain a contraction
\[
  \hc_N (s \otimes s') \in \Gamma\bigl(k^*(\cL_M\otimes \cL_P^*)\bigr),
\]
and similarly for differential forms.

\begin{definition}\label{def:circ}
Given
\[
  \xi \in \Omega^*(M\times\Nbar,\, \cL_M\otimes \cL_N^*), \qquad
  \eta \in \Omega^*(N\times\Pbar,\, \cL_N\otimes \cL_P^*),
\]
define
\[
  \xi\circ\eta \in \Omega^*(M\times\Pbar,\, \cL_M\otimes \cL_P^*)
\]
by
\[
  \xi\circ\eta \;=\; k_* \hc_N\bigl(i^*\xi \wedge j^*\eta\bigr).
\]
\end{definition}

\begin{theorem}\label{leibniz}
Suppose we are given three prequantum systems arising
from symplectic manifolds $M, N, P,$ with prequantum line bundles $\cL_M, \cL_N, \cL_P.$
Let $\sigma\in\Gamma^*(M\times\Nbar,\, \cL_M\otimes \cL_N^*)$ and
$\zeta\in\Gamma^*(N\times\Pbar,\, \cL_N\otimes \cL_P^*)$. Then 
\[
  \nabla(\sigma\circ\zeta) \;=\; \nabla\sigma\circ\zeta \;+\; (-1)^{|\sigma|}\sigma\circ\nabla\zeta.
\]

\end{theorem}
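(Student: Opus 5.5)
The plan is to prove the identity on the whole of $\Omega^*$, without using the condition $L\sigma=0=L\zeta$. It follows from three facts: $\nabla$ satisfies a Leibniz rule for wedge products of twisted forms, the contraction $\hc_N$ is flat, and fibre integration $k_*$ along the closed even-dimensional fibre $N$ commutes with the pulled-back connection.

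\textbf{Step 1 (connections on $M\times N\times P$).} On $M\times N\times P$, $i^*\sigma$ takes values in $i^*(\cL_M\otimes\cL_N^*)$ with the pulled-back connection $i^*(\nabla_M\otimes 1+1\otimes\nabla_N^+)$. Likewise $j^*\zeta$ takes values in $j^*(\cL_N\otimes\cL_P^*)$. Pullback commutes with covariant differentiation, so $\nabla i^*\sigma=i^*\nabla\sigma$ and $\nabla j^*\zeta=j^*\nabla\zeta$. The tensor product connection satisfies the graded Leibniz rule
\[
\nabla(i^*\sigma\wedge j^*\zeta)=i^*\nabla\sigma\wedge j^*\zeta+(-1)^{|\sigma|}i^*\sigma\wedge j^*\nabla\zeta .
\]
The connection $\nabla^+$ on $\cL_N^*$ is defined so that the pairing $\cL_N^*\otimes\cL_N\to\C$ is parallel. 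Hence $\hc_N$ intertwines the connection on $\cL_M\otimes\cL_N^*\otimes\cL_N\otimes\cL_P^*$ with $k^*(\nabla_M\otimes1+1\otimes\nabla_P^+)$ on $k^*(\cL_M\otimes\cL_P^*)$. As a consistency check, the curvatures agree: $(\omega_M-\omega_N)+(\omega_N-\omega_P)=\omega_M-\omega_P$. Therefore $\nabla\,\hc_N(i^*\sigma\wedge j^*\zeta)$ equals $\hc_N$ applied to the right-hand side of the display above.

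\textbf{Step 2 (fibre integration commutes with $\nabla$).} I would work locally on an open set $U\subset M\times P$ over which $\cL_M\otimes\cL_P^*$ is trivialized, so that the connection is $d+a\wedge$ with $a\in\Omega^1(U)$, and its pullback is $d+k^*a\wedge$ on $U\times N$. The fibres $N$ are compact without boundary, so Stokes' theorem on the fibres gives $d\,k_*=\pm k_*\,d$. With the convention that the fibre is integrated first, the sign is $(-1)^{\dim N}=1$, since $\dim N=2n$. The projection formula gives $k_*(k^*a\wedge\beta)=a\wedge k_*\beta$, again with no sign because the fibre dimension is even. Combining these gives $\nabla k_*=k_*\nabla$ locally, hence globally. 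Applying $k_*$ to the result of Step 1 then yields the claimed formula
\[
\nabla(\sigma\circ\zeta)=\nabla\sigma\circ\zeta+(-1)^{|\sigma|}\sigma\circ\nabla\zeta .
\]

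\textbf{Main obstacle.} The substantive points are bookkeeping: fixing an orientation and ordering convention for $k_*$ on $M\times N\times P$, given that the factor $\Nbar$ carries the orientation of $N$ (in dimension $2n$ the orientations from $\omega_N^n$ and $(-\omega_N)^n$ agree), and checking that no stray sign $(-1)^{|\sigma|\cdot 2n}$ or $(-1)^{2n}$ enters. Because the fibre dimension is even, every such sign is trivial. I would nonetheless verify the projection formula and the Stokes sign explicitly in local product coordinates, writing forms as (base form)$\wedge$(fibre form).
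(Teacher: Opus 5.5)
Your proof is correct. It rests on the same two facts as the paper's proof: Stokes' theorem on the closed fibre $N$ disposes of the derivatives in the $N$-direction, and $\dim N=2n$ being even removes all signs. The packaging is different.

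The paper reduces to decomposable forms $\sigma=\alpha_M\wedge\beta_N$, $\zeta=\gamma_N\wedge\delta_P$. Then $\sigma\circ\zeta$ is $\alpha_M\wedge\delta_P$ times the number $\int_N\hc_N(\beta_N\wedge\gamma_N)$. The paper expands both sides and cancels the two middle terms using $\int_N d\,\hc_N(\beta_N\wedge\gamma_N)=0$; the relation $|\beta_N|+|\gamma_N|=2n$ fixes the sign of the last term.

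You instead prove an intrinsic lemma, $\nabla k_*=k_*\nabla$ for forms valued in a pulled-back bundle with pulled-back connection. You combine it with the Leibniz rule for the tensor-product connection upstairs and the parallelism of the pairing $\cL_N^*\otimes\cL_N\to\C$.

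Your route buys two things:
\begin{itemize}
\item It avoids the reduction to decomposable forms. The paper uses that reduction without comment, and it implicitly needs a density and continuity argument, since smooth forms on a product are not finite sums of products.
\item It makes plain that the hypothesis $L\sigma=0=L\zeta$ is never used. The identity holds on all of $\Omega^*$; the paper's computation does not use the hypothesis either, but does not say so.
\end{itemize}
The paper's route is more concrete, and its only analytic input is Stokes on $N$ itself.

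One correction to your bookkeeping remark: $(-\omega_N)^n=(-1)^n\omega_N^n$. So the symplectic orientations of $N$ and $\Nbar$ differ when $n$ is odd, contrary to what you wrote. This is harmless. Whichever orientation of the fibre is used to define $k_*$, changing it multiplies all three terms of the identity by the same sign, so your argument does not depend on it.
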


Since $\nabla^2 = - i L,$ we have
 \begin{corollary}\labell{Lcomp}
Suppose we are given three prequantum systems arising
from symplectic manifolds $M, N, P,$ with prequantum line bundles $\cL_M, \cL_N, \cL_P.$
Let $\sigma\in\Gamma^*(M\times\Nbar,\, \cL_M\otimes \cL_N^*)$ and
$\zeta\in\Gamma^*(N\times\Pbar,\, \cL_N\otimes \cL_P^*)$. Then
\[
 L(\sigma\circ\zeta) \;=\; L\sigma\circ\zeta \;+ \sigma \circ L \zeta.
\]
\end{corollary}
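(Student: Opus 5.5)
The plan is to deduce the identity formally from Theorem \ref{leibniz} by applying the Leibniz rule twice and using $\nabla^2=-iL$ on each of the three product prequantum systems. On $M\times\Nbar$ with bundle $\cL_M\otimes\cL_N^*$, the induced connection has curvature $\omega_M-\omega_N$, which is the symplectic form of $M\times\Nbar$. So there $\nabla^2=-iL$, with $L$ the Lefschetz operator of $M\times\Nbar$. The same holds on $N\times\Pbar$ and $M\times\Pbar$.

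The first step is to check that Theorem \ref{leibniz} may be applied to $\nabla\sigma$ and $\nabla\zeta$, i.e.\ that these still lie in $\Gamma^*$. This follows from item (4) of Lemma \ref{comms}, $[L,\nabla]=0$, applied on the product manifolds: if $L\sigma=0$ then $L\nabla\sigma=\nabla L\sigma=0$. Next I apply $\nabla$ to both sides of Theorem \ref{leibniz}, using it again on each term:
\begin{align*}
\nabla^2(\sigma\circ\zeta) &= \nabla(\nabla\sigma\circ\zeta) + (-1)^{|\sigma|}\nabla(\sigma\circ\nabla\zeta)\\
&= \nabla^2\sigma\circ\zeta + (-1)^{|\sigma|+1}\nabla\sigma\circ\nabla\zeta + (-1)^{|\sigma|}\nabla\sigma\circ\nabla\zeta + \sigma\circ\nabla^2\zeta .
\end{align*}
Here $|\nabla\sigma|=|\sigma|+1$. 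The two middle terms cancel, leaving $\nabla^2(\sigma\circ\zeta)=\nabla^2\sigma\circ\zeta+\sigma\circ\nabla^2\zeta$.

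Finally, I substitute $\nabla^2=-iL$ on all three products. Since $\circ$ is built from pullback, the $\C$-bilinear contraction $\hc_N$, and fibre integration, it is $\C$-linear in each argument. The constant $-i$ therefore factors out of both sides, and cancelling it gives the stated identity.

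No step here is a serious obstacle, and the bookkeeping is routine. The only point requiring care is that $\nabla\sigma,\nabla\zeta\in\Gamma^*$, which is needed to apply Theorem \ref{leibniz} the second time. I note also that for $\sigma,\zeta\in\Gamma^*$ the right-hand side vanishes identically. The content of the corollary is therefore that $L(\sigma\circ\zeta)=0$, which is what is needed for Corollary \ref{compos}.
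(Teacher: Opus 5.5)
Your proposal is correct and follows the paper's argument: the paper deduces the corollary in one line from Theorem \ref{leibniz} and $\nabla^2=-iL$. You have written that out in full, with the double application of the Leibniz rule, the cancellation of the cross terms $\pm\nabla\sigma\circ\nabla\zeta$, the use of $\C$-bilinearity of $\circ$, and the check via $[L,\nabla]=0$ that $\nabla\sigma,\nabla\zeta$ remain in $\Gamma^*$. That last check is harmless but not strictly needed, since the paper's proof of Theorem \ref{leibniz} never uses $L\sigma=0$ or $L\zeta=0$.
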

Corollary \ref{Lcomp} implies
\begin{corollary} \labell{compos}
Suppose we are given three prequantum systems arising
from symplectic manifolds $M, N, P,$ with prequantum line bundles $\cL_M, \cL_N, \cL_P,$ and
elements
\begin{align*}
  \sigma &\in \Gamma^*(M\times\Nbar,\, \cL_M\otimes \cL_N^*),\\
  \tau   &\in \Gamma^*(N\times\Pbar,\, \cL_N\otimes \cL_P^*).
\end{align*}

Then the composition 

\[
  \sigma\circ\tau \in \Omega^*(M\times\Pbar,\, \cL_M\otimes \cL_P^*)
\]

lies in $ \Gamma^*(M\times\Pbar,\, \cL_M\otimes \cL_P^*).$\end{corollary}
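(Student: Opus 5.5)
Corollary \ref{compos} follows almost immediately from Corollary \ref{Lcomp}; the plan is to apply it with $\zeta=\tau$. Since $\sigma\in\Gamma^*(M\times\Nbar,\cL_M\otimes\cL_N^*)$ and $\tau\in\Gamma^*(N\times\Pbar,\cL_N\otimes\cL_P^*)$, by definition $L\sigma=0$ and $L\tau=0$, where $L$ denotes wedge product with the symplectic form of the relevant product system. Here $M\times\Nbar$ carries the form $\omega_M-\omega_N$ and $N\times\Pbar$ carries $\omega_N-\omega_P$. Corollary \ref{Lcomp} then gives
\[
L(\sigma\circ\tau)=L\sigma\circ\tau+\sigma\circ L\tau = 0\circ\tau+\sigma\circ 0 = 0,
\]
using that $\circ$ is bilinear, which is clear from Definition \ref{def:circ} since pullback, wedge, contraction and fibre integration are all linear. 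Hence $\sigma\circ\tau\in\ker L=\Gamma^*(M\times\Pbar,\cL_M\otimes\cL_P^*)$.

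The only step deserving a check is that the $L$ on the left of Corollary \ref{Lcomp} is the one for $M\times\Pbar$, namely $(\omega_M-\omega_P)\wedge$. This is built into that corollary, which came from Theorem \ref{leibniz} via $\nabla^2=-iL$ on each product. For a direct sanity check, I would split the form $\omega_M-\omega_P$, pulled back to $M\times N\times P$, as
\[
(\omega_M-\omega_N)+(\omega_N-\omega_P).
\]
Wedging with this passes inside $k_*$, because $\omega_M-\omega_P$ is pulled back from the base $M\times P$ and fibre integration is linear over forms pulled back from the base. The two summands then become $i^*(L\sigma)\wedge j^*\tau$ and, since $\omega$ is even, $i^*\sigma\wedge j^*(L\tau)$. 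Both vanish.

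So there is no real obstacle here. All the content lies in Theorem \ref{leibniz}, or equivalently in the projection formula for $k_*$ used in the sanity check, both of which are taken as given.
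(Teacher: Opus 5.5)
Your argument is correct and is the paper's own: the paper derives Corollary \ref{compos} directly from Corollary \ref{Lcomp}, since $L\sigma=0$ and $L\tau=0$ force $L(\sigma\circ\tau)=0$. Your sanity check, which splits $\omega_M-\omega_P$ as $(\omega_M-\omega_N)+(\omega_N-\omega_P)$ on $M\times N\times P$ and applies the projection formula for $k_*$, is in fact a self-contained proof that bypasses Theorem \ref{leibniz} entirely, and it even yields the identity of Corollary \ref{Lcomp} for arbitrary forms.
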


Then Theorem \ref{leibniz} gives
\begin{corollary}\labell{compo}
The composition $\circ$ descends to the cohomology $D^*.$
\end{corollary}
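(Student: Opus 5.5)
To prove Corollary~\ref{compo} I will check two things: that $\circ$ takes pairs of $\nabla$-closed elements of $\Gamma^*$ to $\nabla$-closed elements, and that changing either argument by a $\nabla$-exact element of $\Gamma^*$ changes the composition by a $\nabla$-exact element of $\Gamma^*$. Then the formula $[\sigma]\circ[\tau] := [\sigma\circ\tau]$ is a well-defined map
\[
D^*(M\times\Nbar,\cL_M\otimes\cL_N^*)\times D^*(N\times\Pbar,\cL_N\otimes\cL_P^*)\to D^*(M\times\Pbar,\cL_M\otimes\cL_P^*).
\]
Corollary~\ref{compos} already guarantees that all compositions of elements of $\Gamma^*$ land in $\Gamma^*$. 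So everything takes place inside the complexes $(\Gamma^*,\nabla)$, and it is legitimate to speak of classes in $D^*$.

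\emph{Closedness.} Let $\sigma\in\Gamma^*(M\times\Nbar,\cL_M\otimes\cL_N^*)$ and $\tau\in\Gamma^*(N\times\Pbar,\cL_N\otimes\cL_P^*)$ with $\nabla\sigma=0$ and $\nabla\tau=0$. Theorem~\ref{leibniz} gives
\[
\nabla(\sigma\circ\tau)=\nabla\sigma\circ\tau+(-1)^{|\sigma|}\sigma\circ\nabla\tau=0.
\]
Since $\sigma\circ\tau\in\Gamma^*$ by Corollary~\ref{compos}, it defines a class in $D^*$.

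\emph{Independence of representatives.} Replace $\sigma$ by $\sigma+\nabla\alpha$ with $\alpha\in\Gamma^*(M\times\Nbar,\cL_M\otimes\cL_N^*)$. Note that $\nabla\alpha\in\Gamma^*$ because $\nabla$ preserves $\Gamma^*$. Apply Theorem~\ref{leibniz} to the pair $(\alpha,\tau)$, both in $\Gamma^*$:
\[
\nabla(\alpha\circ\tau)=\nabla\alpha\circ\tau+(-1)^{|\alpha|}\alpha\circ\nabla\tau=\nabla\alpha\circ\tau .
\]
Hence $\nabla\alpha\circ\tau$ is the $\nabla$-image of $\alpha\circ\tau$, and $\alpha\circ\tau$ lies in $\Gamma^*$ by Corollary~\ref{compos}. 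So $\nabla\alpha\circ\tau$ is exact in the complex $\Gamma^*$.

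Symmetrically, replace $\tau$ by $\tau+\nabla\beta$ with $\beta\in\Gamma^*$. Then
\[
\nabla(\sigma\circ\beta)=\nabla\sigma\circ\beta+(-1)^{|\sigma|}\sigma\circ\nabla\beta=(-1)^{|\sigma|}\sigma\circ\nabla\beta,
\]
so $\sigma\circ\nabla\beta=\nabla\bigl((-1)^{|\sigma|}\sigma\circ\beta\bigr)$ is exact. Because $\circ$ is bilinear (it is built from pullback, wedge, contraction and fibre integration, all linear), the class of the composition is unchanged. Associativity on $D^*$ then follows from Proposition~\ref{assoc} at the chain level.

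The only point I expect to need care is that exactness is meant in the complex $\Gamma^*$, not merely in $\Omega^*$. This requires the primitives $\alpha\circ\tau$ and $\sigma\circ\beta$ to lie in $\ker L$, which is exactly what Corollary~\ref{compos} supplies. Since $D^*$ is concentrated in middle degree by Theorem~\ref{Lagrangian}, one may also note that the degree count $(m+n)+(n+p)-2n=m+p$ shows the composition lands in the middle-dimensional part. This is what makes the resulting cohomology category an ordinary linear category.
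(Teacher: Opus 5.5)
Your proposal is correct and follows the same route as the paper. The paper deduces Corollary \ref{compo} directly from the Leibniz rule (Theorem \ref{leibniz}) together with Corollary \ref{compos}; you have written out the standard closed-$\circ$-closed and exact-$\circ$-closed checks, rightly insisting that the primitives $\alpha\circ\tau$ and $\sigma\circ\beta$ lie in $\Gamma^*$ (in fact the exactness check is vacuous in the only nonzero degree, since $\Gamma^{m+n-1}=0$ by Proposition \ref{prop:1.1} and so $D^{m+n}=\ker\nabla$, but your general argument covers this anyway).
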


\begin{proof}[Proof of Theorem~\ref{leibniz}.]
It suffices to prove this in the case where
\[
  \sigma = \alpha_M\wedge\beta_N , \qquad
  \zeta = \gamma_N\wedge\delta_P 
\]
\noindent and where\footnote{We again abuse notation by identifying these forms with their pullbacks to the appropriate product manifolds.} 
$$\alpha_M \in \Omega^*(M,\cL_M), \beta_N \in \Omega^*(N,\cL_N^*),  \gamma_N \in \Omega^*(N,\cL_N), \delta_P \in \Omega^*(P,\cL_P^*).$$ 

In this case
\[
  \sigma\circ\zeta = \alpha_M\wedge \delta_P
                   \int_N \hc_N( \beta_N\wedge \gamma_N)
\]

\noindent where we continue to write $\hc_N: \Omega^*(N,\cL_N^*)  \otimes \Omega^*(N,\cL_N) \to \Omega^{*}(N)$ for the natural contraction.

We then have
 
\begin{equation}
  (\nabla_M + \nabla_P^+)(\sigma\circ\zeta)
   = \left(\int_N \hc_N( \beta_N\wedge \gamma_N) \right)\left( \nabla_M \alpha_M   \wedge \delta_P  
    + (-1)^{|\alpha_M|  } \alpha_M \wedge   \nabla_P^+ \delta_P  \right)
 \end{equation}

On the other hand 
 
\begin{align*}
 ( (\nabla_M+\nabla_N^+) (\alpha_M\wedge\beta_N))\circ (\gamma_N \wedge \delta_P)
    +  (-1)^{|\alpha_M| + |\beta_N|}(\alpha_M\wedge\beta_N) \circ((\nabla_N+\nabla_P^+)(\gamma_N \wedge \delta_P))=& \\
     \nabla_M \alpha_M (\int_N\hc_N(\beta_N\wedge\gamma_N)) \wedge \delta_P +
    (-1)^{|\alpha_M|} \alpha_M (\int_N \hc_N(\nabla_N^+ \beta_N \wedge \gamma_N)) \wedge\delta_P +&\\
     (-1)^{|\alpha_M| +|\beta_N|} \alpha_M (\int_N \hc_N( \beta_N \wedge\nabla_N \gamma_N)) \wedge \delta_P +
 (-1)^{|\alpha_M| +|\beta_N| +|\gamma_N|} \alpha_M (\int_N \hc_N( \beta_N \wedge  \gamma_N))\wedge \nabla_P^+&\delta_P
\end{align*}
Since 
\begin{equation}
  0= \int_N d \hc_N(\beta_N\wedge \gamma_N )
    = \int_N  \hc_N( \nabla_N^+\beta_N  \wedge \gamma_N  )
     +  (-1)^{|\beta_N|}\int_N  \hc_N(\beta_N \wedge \nabla_N \gamma_N  ),
\end{equation} 
\noindent and noting that $|\beta_N| + |\gamma_N| = 2n$ is even, we see that the two expressions agree.
\end{proof}
We now check that the composition map $\circ$ is associative.
\begin{proposition}\labell{assoc}
Suppose we are given four prequantum systems corresponding to symplectic manifolds $M, N, P,Q, $ with prequantum line bundles $\cL_M, \cL_N, \cL_P,\cL_Q.$  Then for any
\(\sigma \in \Gamma^*(M\times\Nbar,\, \cL_M\otimes \cL_N^*)\),
\(\tau   \in \Gamma^*(N\times\Pbar,\, \cL_N\otimes \cL_P^*)\), and
\(\rho   \in \Gamma^*(P\times\Qbar,\, \cL_P\otimes \cL_Q^*)\);
we have
\begin{equation}\labell{assoceq}
  (\sigma\circ\tau)\circ\rho \;=\; \sigma\circ(\tau\circ\rho).
\end{equation}
 
\end{proposition}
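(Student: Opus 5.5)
Both sides of (\ref{assoceq}) are $\C$-bilinear in each slot, continuous, and built from pullbacks, wedge products, fibrewise contractions and fibre integrations. The plan is therefore to reduce to decomposable forms, as in the proof of Theorem~\ref{leibniz}, and then compare the two sides by a Fubini-type argument. The inclusion in $\Gamma^*$ plays no role in associativity. We will prove (\ref{assoceq}) for arbitrary twisted forms, and Corollary~\ref{compos} then guarantees that both sides are again in $\Gamma^*$.

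\textbf{Step 1: reduction to decomposables.} Locally finite sums of decomposable forms are dense in the $C^\infty$ topology, and both sides are continuous in each argument. It therefore suffices to treat
\[
\sigma=\alpha_M\wedge\beta_N,\qquad \tau=\gamma_N\wedge\delta_P,\qquad \rho=\epsilon_P\wedge\phi_Q,
\]
with
\[
\alpha_M\in\Omega^*(M,\cL_M),\quad \beta_N\in\Omega^*(N,\cL_N^*),\quad \gamma_N\in\Omega^*(N,\cL_N),
\]
\[
\delta_P\in\Omega^*(P,\cL_P^*),\quad \epsilon_P\in\Omega^*(P,\cL_P),\quad \phi_Q\in\Omega^*(Q,\cL_Q^*).
\]

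\textbf{Step 2: both sides on decomposables.} Using the formula from the proof of Theorem~\ref{leibniz},
\[
\sigma\circ\tau=\Bigl(\int_N\hc_N(\beta_N\wedge\gamma_N)\Bigr)\,\alpha_M\wedge\delta_P.
\]
Composing this with $\rho$ gives
\[
(\sigma\circ\tau)\circ\rho=\Bigl(\int_N\hc_N(\beta_N\wedge\gamma_N)\Bigr)\Bigl(\int_P\hc_P(\delta_P\wedge\epsilon_P)\Bigr)\,\alpha_M\wedge\phi_Q.
\]
The same computation gives
\[
\tau\circ\rho=\Bigl(\int_P\hc_P(\delta_P\wedge\epsilon_P)\Bigr)\,\gamma_N\wedge\phi_Q,
\]
and hence an identical expression for $\sigma\circ(\tau\circ\rho)$, up to signs.

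\textbf{Step 3: the signs.} This is the main obstacle. The scalar integrals are nonzero only when $|\beta_N|+|\gamma_N|=2n$ and $|\delta_P|+|\epsilon_P|=2p$. Both totals are even, so the form whose integral is being extracted is even. I will fix one convention for the fibre integral $k_*$ (integrate the fibre variables placed on the right, say). Then I check that moving $\delta_P$ past the even block $\beta_N\wedge\gamma_N$, or moving $\gamma_N$ past the even block $\delta_P\wedge\epsilon_P$, costs no sign. If the convention introduces a sign, I will check that it depends only on the degrees of the factors and is symmetric between the two bracketings.

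A cleaner alternative, which I would also write down, works directly on $M\times N\times P\times Q$. Both sides equal
\[
\pi_*\,\hc_N\hc_P\bigl(\sigma\wedge\tau\wedge\rho\bigr),
\]
where $\pi\colon M\times N\times P\times Q\to M\times Q$ is the projection. This follows from functoriality of fibre integration under composition of projections, $(\pi_1)_*(\pi_2)_*=(\pi_1\circ\pi_2)_*$, together with the projection formula
\[
k_*(k^*\mu\wedge\nu)=\mu\wedge k_*\nu,
\]
which is valid with the "fibre on the right" convention. The contractions $\hc_N$ and $\hc_P$ act on different tensor factors, so they commute with each other and with pullbacks. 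The only remaining ingredient is that the fibres $N$ and $P$ are even-dimensional, so interchanging the order of fibre integration over $N\times P$ introduces no sign.
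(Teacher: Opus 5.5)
Your proposal is correct and is essentially the paper's proof: reduce to decomposable forms and observe that both bracketings equal $\bigl(\int_N\hc_N(\beta_N\wedge\gamma_N)\bigr)\bigl(\int_P\hc_P(\delta_P\wedge\epsilon_P)\bigr)\,\alpha_M\wedge\phi_Q$. Your density and continuity argument, and your remark that only blocks of even degree $2n$ and $2p$ contribute (so no signs arise), make explicit what the paper leaves implicit; your alternative global argument is also fine, though besides the projection formula it tacitly uses base change (pullback commutes with fibre integration along product projections).
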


\begin{proof}
It is sufficient to prove (\ref{assoceq}) when 

$$\sigma = \alpha_M \wedge \beta_N, \alpha_M \in \Omega^*(M ,\cL_M),  \beta_N \in \Omega^*(N, \cL_N^*)$$

$$\tau = \gamma_N \wedge \delta_P,  \gamma_N \in \Omega^*(N, \cL_N ), \delta_P \in \Omega^*(P,\cL_P^*)$$

$$\rho = \epsilon_P \wedge \zeta_Q,\epsilon_P  \in \Omega^*(P, \cL_P), \zeta_Q \in \Omega^*(Q,\cL_Q^*).$$

In that case, both the right hand and left hand sides of (\ref{assoceq}) are given by

$$(\sigma\circ\tau)\circ\rho =  \alpha_M \wedge \zeta_Q \int_N (\hc_N(  \beta_N\wedge  \gamma_N)) \int_P(\hc_P(\delta_P\wedge \epsilon_P)) = \sigma\circ(\tau\circ\rho),$$

\noindent where $\hc_P: \Omega^*(P,\cL_P^*)  \otimes \Omega^*(P,\cL_P) \to \Omega^{*}(P)$ is the natural contraction.

\end{proof}

We now have the building blocks for interpreting the complex $\Gamma^*$ and its cohomology $D^*$ as morphisms in categories.  One element is missing:  Unlike the Weinstein "category", where the diagonal Lagrangian in $M \times \Mbar$ is the identity morphism, our category does not have such a morphism.  To get around this, simply adjoin to the morphisms arising from $\Gamma^*$ an identity morphism $1_M$ for every symplectic manifold; in fact, one may as well adjoin a morphism $S_M$ for every symplectomorphism $S : M \to M$ preserving the hermitian line bundle $\cL$ and the connection 
$\nabla.$\footnote{Such symplectomorphisms are sometimes called {\em quantomorphisms}.  Note that we require the hermitian structure on the prequantum line bundle to be preserved, which is not necessarily standard in the literature.}  In the rest of the paper we suppress this issue to avoid further complicating the statements.

As in the Weinstein construction, our category $\Gamma^*$ has a form of involutivity:   For any two  prequantum systems arising
from symplectic manifolds $M, N $ with prequantum line bundles $\cL_M, \cL_N, $\ and given $\sigma\in\Gamma^*(M \times\Nbar,\, \cL_M\otimes \cL_N^*)$, we write
\[
  \sigma^T \in \Gamma^*(N\times\Mbar,\, \cL_N\otimes \cL_M^*),
\]
for the form arising from $\sigma$ via the map $M \times N \to N \times M$ exchanging $M$ and $N$ and the hermitian structures on $\cL_M$ and $\cL_N.$\footnote{Note that $\cdot^T$ is $\C-$antilinear.}  It is then clear that $(\sigma^T)^T = \sigma$ for any  $\sigma\in\Gamma^*(M \times\Nbar,\, \cL_M\otimes \cL_N^*).$  This involutivity is graded contravariant with respect to the operation $\circ:$  If $ \sigma \in \Gamma^*(M\times\Nbar,\, \cL_M\otimes \cL_N^*),
\tau   \in \Gamma^*(N\times\Pbar,\, \cL_N\otimes \cL_P^*)$ as above, then

$$(\sigma \circ \tau)^T = (-1)^{|\sigma||\tau|} \tau^T \circ \sigma^T.$$  

We summarize these results in the following.

\begin{theorem}\labell{gammacat}
There exists a $\Z_2$-graded involutive differential graded category $\Gamma^*$ whose objects are prequantum systems $(M,\omega,\cL,\nabla)$, and whose
morphisms $N\Rightarrow M$ between two such systems $(M,\omega_M,\cL_M,\nabla_M)$ and $(N,\omega_N,\cL_N,\nabla_N)$ are given by the complex
\[
  \Gamma^*(N,M) := \Gamma^*(M\times\Nbar,\, \cL_M\otimes \cL_N^*)
\]
with the composition law~$\circ.$ \end{theorem}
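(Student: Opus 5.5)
The plan is to assemble Theorem \ref{gammacat} from the results already proved in this section. The objects are prequantum systems and the morphism spaces are the complexes $\Gamma^*(N,M)=\Gamma^*(M\times\Nbar,\cL_M\otimes\cL_N^*)$. Each carries the differential $\nabla=\nabla_M+\nabla_N^+$, and $\nabla$ squares to zero there. To see this, note $\nabla^2=-iL$, where $L$ is wedging with $\omega_M-\omega_N$, the symplectic form of $M\times\Nbar$. This vanishes on $\ker L$, and $\nabla$ preserves $\ker L$ because $[L,\nabla]=0$ (item (4) of Lemma \ref{comms}).

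The steps are as follows.

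\begin{enumerate}
\item \textbf{Composition lands in $\Gamma^*$.} By Corollary \ref{compos}, $\circ$ maps $\Gamma^*(P,N)\times\Gamma^*(N,M)$ into $\Gamma^*(P,M)$. It is bilinear because pullback, wedge, contraction and fibre integration are each linear.
\item \textbf{Associativity.} This is Proposition \ref{assoc}.
\item \textbf{Leibniz rule.} Compatibility of $\nabla$ with $\circ$ is Theorem \ref{leibniz}. Since $\circ$ has degree $-2n$, which is even, the sign $(-1)^{|\sigma|}$ is consistent modulo $2$. So we get a $\Z_2$-graded dg category, not a $\Z$-graded one.
\item \textbf{Identities.} These are handled by the formal adjunction of $1_M$ described above.
\end{enumerate}

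What remains is the involution, which I will check on decomposable forms as in the proofs of Theorem \ref{leibniz} and Proposition \ref{assoc}.

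First I check that $\sigma\mapsto\sigma^T$ preserves $\Gamma^*$ and commutes with $\nabla$. The swap $M\times N\to N\times M$ combined with conjugation via the hermitian structures identifies $\cL_M\otimes\cL_N^*$ with $\overline{\cL_N\otimes\cL_M^*}$. The symplectic form $\omega_M-\omega_N$ goes to $-(\omega_N-\omega_M)$, so $\ker L$ is preserved. Conjugation takes a unitary connection to the dual connection, so $\nabla$ is intertwined (up to the antilinearity noted in the footnote). Involutivity $(\sigma^T)^T=\sigma$ is immediate.

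For graded contravariance, I take $\sigma=\alpha_M\wedge\beta_N$ and $\tau=\gamma_N\wedge\delta_P$. Then $\sigma\circ\tau=\alpha_M\wedge\delta_P\int_N\hc_N(\beta_N\wedge\gamma_N)$. The left side $(\sigma\circ\tau)^T$ is $\bar\delta_P\wedge\bar\alpha_M$ times the conjugate integral, up to the sign $(-1)^{|\alpha||\delta|}$ from reordering. The right side $\tau^T\circ\sigma^T$ equals $\bar\delta_P\wedge\bar\alpha_M\int_N\hc_N(\bar\gamma_N\wedge\bar\beta_N)$, again with reordering signs. Swapping $\bar\gamma_N$ and $\bar\beta_N$ inside the integral gives $(-1)^{|\beta||\gamma|}$. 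The integral is nonzero only if $|\beta|+|\gamma|=2n$, so the total degrees are $|\sigma|=|\alpha|+|\beta|$ and $|\tau|=|\gamma|+|\delta|$.

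\textbf{Main obstacle.} The step I expect to be hardest is this sign bookkeeping. I must collect the sign from reordering $\sigma$ into $\beta\wedge\alpha$ form and from pulling forms past the fibre integration, which is the even-dimensional $N$. The goal is to show all these signs combine to $(-1)^{|\sigma||\tau|}$ modulo the parity constraint $|\beta|\equiv|\gamma|$. I would reduce everything to the exponent $|\alpha||\beta|+|\gamma||\delta|+|\alpha||\delta|+|\beta||\gamma|$ plus the reordering terms, and use $|\beta|\equiv|\gamma| \pmod 2$ to match it with $(|\alpha|+|\beta|)(|\gamma|+|\delta|)$. If a discrepancy appears, I would fold it into the convention defining $\cdot^T$, for example by including a degree-dependent sign.
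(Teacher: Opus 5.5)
Your proposal is correct and matches the paper, which proves Theorem~\ref{gammacat} by assembling Corollary~\ref{compos}, Proposition~\ref{assoc}, Theorem~\ref{leibniz}, the formally adjoined identities, and the involution $\sigma\mapsto\sigma^T$. The one difference is that the paper only asserts $(\sigma\circ\tau)^T=(-1)^{|\sigma||\tau|}\tau^T\circ\sigma^T$, whereas your decomposable-form computation verifies it, and no extra sign convention in $\cdot^T$ is needed: your exponent $|\alpha||\beta|+|\gamma||\delta|+|\alpha||\delta|+|\beta||\gamma|$ differs from $|\sigma||\tau|$ by $(|\alpha|-|\delta|)(|\beta|-|\gamma|)$, which is even because $|\beta|+|\gamma|=2n$.
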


Note that if ${\rm dim~} M = 2m , $ ${\rm dim} ~N = 2n,$ and ${\rm dim}~ P = 2p, $ compositions of elements in $\Gamma^{m + n} (M,N)$ with elements
in $\Gamma^{n+p}(N,P)$ give elements of $\Gamma^{m + p}(M,P)$; so that the category
$\Gamma^*$ contains a subcategory with morphisms
\[
  \Gamma_{Lag}(N,M) := \Gamma^{m+n} (M^{2m} \times\Nbar^{2n},\, \cL_M\otimes \cL_N^*)
\]
corresponding to (duals of) Lagrangian submanifolds.   The involution of the differential graded category, which is graded contravariant, is contravariant in the linear sense if the middle dimensional forms are even, that is, if the dimension of the manifold is a multiple of four.

\begin{theorem}\labell{laggammacat}
There exists a linear category $\Gamma_{Lag}$ whose objects are prequantum systems $(M^{2m},\omega,\cL,\nabla)$, and whose
morphisms $N^{2n} \Rightarrow  M^{2m} $ between two such systems $(M^{2m},\omega_M,\cL_M,\nabla_M)$ and $(N^{2n},\omega_N,\cL_N,\nabla_N)$ are given by the vector space
$$\Gamma_{Lag} (N^{2n},M^{2m}): =\Gamma^{m+n}(M\times\Nbar,\, \cL_M\otimes \cL_N^*)
$$
with the composition law~$\circ.$  Restricted to manifolds whose dimension is a multiple of $4,$ this category is involutive.
\end{theorem}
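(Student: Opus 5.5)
The statement to prove is Theorem \ref{laggammacat}. I would obtain it by restricting the $\Z_2$-graded differential graded category of Theorem \ref{gammacat} to forms of the single degree $m+n$, and then checking separately that the involution becomes an honest contravariant functor. There are three things to establish: closure of composition in the Lagrangian degrees, associativity (together with linearity and identities), and the sign behaviour of the involution.

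For closure, take $\sigma \in \Gamma^{m+n}(M\times\Nbar,\cL_M\otimes\cL_N^*)$ and $\tau\in\Gamma^{n+p}(N\times\Pbar,\cL_N\otimes\cL_P^*)$.
\begin{itemize}
\item The form $i^*\sigma\wedge j^*\tau$ on $M\times N\times P$ has degree $m+2n+p$.
\item Fibre integration $k_*$ along the $2n$-dimensional fibre $N$ lowers degree by $2n$, so $\sigma\circ\tau$ has degree $m+p$.
\item By Corollary \ref{compos}, $\sigma\circ\tau$ lies in $\ker L$.
\end{itemize}
Hence $\sigma\circ\tau\in\Gamma^{m+p}(M\times\Pbar,\cL_M\otimes\cL_P^*) = \Gamma_{Lag}(P,M)$.

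Composition is bilinear, since pullback, wedge product, the contraction $\hc_N$ and $k_*$ are all linear. Associativity is exactly Proposition \ref{assoc}. Identities are supplied by the adjoined morphisms $1_M$, as in the discussion preceding Theorem \ref{gammacat}. This gives a linear category; no differential appears, since we only keep a single degree.

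For the involution, I would use the rule $(\sigma\circ\tau)^T = (-1)^{|\sigma||\tau|}\tau^T\circ\sigma^T$. I would verify it on decomposable forms $\sigma=\alpha_M\wedge\beta_N$ and $\tau=\gamma_N\wedge\delta_P$, as in the proof of Proposition \ref{assoc}.
\begin{itemize}
\item Both sides reduce to $\int_N$ of the contracted product times a form on $M\times P$.
\item The sign comes from reordering the factors $\alpha,\beta,\gamma,\delta$ after swapping the factors of the product manifolds.
\item The hermitian structures identify $\cL^*$ with $\overline{\cL}$, so the involution is antilinear; contraction is compatible with this identification.
\end{itemize}

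Now suppose $\dim M = 2m$, $\dim N = 2n$ and $\dim P = 2p$ are all multiples of $4$. Then $m$, $n$ and $p$ are even, so $|\sigma| = m+n$ and $|\tau| = n+p$ are even. The sign $(-1)^{|\sigma||\tau|}$ is then $+1$, and $T$ is a genuine contravariant involution with $(\sigma^T)^T=\sigma$. I would also check that $T$ preserves $\ker L$: under the swap $M\times\Nbar\to N\times\Mbar$ the symplectic form $\omega_M-\omega_N$ becomes its negative, so $L\sigma = 0$ still implies $L\sigma^T=0$.

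The main obstacle is the sign bookkeeping in the involution identity: orientation conventions for fibre integration, the swap of factors, and the degree of the fibre. Since $2n$ is even, the fibre-integration sign conventions should not contribute. So I would first carefully redo the decomposable computation with the Koszul signs, confirming that the only surviving sign is $(-1)^{|\sigma||\tau|}$.
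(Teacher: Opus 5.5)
Your proposal is correct and follows essentially the paper's route: the paper obtains $\Gamma_{Lag}$ as the subcategory of $\Gamma^*$ picked out by the degree count $(m+n)+(n+p)-2n=m+p$. Closure under $\ker L$ comes from Corollary \ref{compos}, associativity from Proposition \ref{assoc}, identities are adjoined, and involutivity comes from the graded contravariance $(\sigma\circ\tau)^T=(-1)^{|\sigma||\tau|}\tau^T\circ\sigma^T$, whose sign is trivial once the middle-degree forms are even; your planned check of that sign rule on decomposable forms (which works because $|\beta_N|\equiv|\gamma_N| \pmod 2$) supplies a step the paper only asserts.
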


Theorem \ref{leibniz} shows that this structure descends to the cohomology $D^*.$   In view of the vanishing theorem (Theorem \ref{Lagrangian}), we have
 
\begin{theorem}\labell{dcat}
There exists a linear category $D^*$ whose objects are prequantum systems $(M^{2m},\omega,\cL,\nabla)$, and whose
morphisms $N\Rightarrow M$ between two such systems $(M^{2m},\omega_M,\cL_M,\nabla_M)$ and $(N^{2n},\omega_N,\cL_N,\nabla_N)$ are given by the cohomology
\[
   D_{Lag}(N,M): = D^{m+n}(M\times\Nbar,\, \cL_M\otimes \cL_N^*)
\]
with the composition law~$\circ.$  Restricted to manifolds whose dimension is a multiple of $4,$ this category is involutive.

\end{theorem}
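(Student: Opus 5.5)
The plan is to assemble Theorem \ref{dcat} from the results already established for the complex $\Gamma^*$, passing to cohomology. The objects are prequantum systems. For $N^{2n}\Rightarrow M^{2m}$ the morphism space is $D_{Lag}(N,M)=D^{m+n}(M\times\Nbar,\cL_M\otimes\cL_N^*)$. Since $M\times\Nbar$ is a prequantum system of dimension $2(m+n)$ with line bundle $\cL_M\otimes\cL_N^*$, whose connection has curvature $\omega_M-\omega_N$, Theorem \ref{Lagrangian} says this is the only possibly nonzero degree of $D^*(M\times\Nbar,\cL_M\otimes\cL_N^*)$. So $D_{Lag}(N,M)$ is the entire cohomology. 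Moreover it is $\ker\nabla\subset\Gamma^{m+n}$, as noted after Definition \ref{deriv}, because $\Gamma^{m+n-1}=0$ by Proposition \ref{prop:1.1}.

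Composition is $\circ$. By Corollary \ref{compos}, $\circ$ maps $\Gamma^*\times\Gamma^*$ into $\Gamma^*$. By Theorem \ref{leibniz}, the composite of two $\nabla$-closed forms is closed, and a closed form composed with an exact one is exact: $\sigma\circ\nabla\eta=\pm\nabla(\sigma\circ\eta)$ when $\nabla\sigma=0$. Hence $\circ$ descends to $D^*$, which is Corollary \ref{compo}.

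Next I check degrees. Fibre integration over $N$ lowers degree by $2n$, so classes in degrees $m+n$ and $n+p$ compose to degree $m+p$, the middle degree of $M\times\Pbar$. Thus $D_{Lag}(N,M)\times D_{Lag}(P,N)\to D_{Lag}(P,M)$, and bilinearity is clear from the definition. Associativity on representatives is Proposition \ref{assoc}, and it passes to classes. Identities are handled by the adjunction convention the paper adopts: formally adjoined $1_M$, compatible with the Leibniz structure. This gives a linear category.

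For involutivity, the map $\sigma\mapsto\sigma^T$ commutes with $\nabla$ and $L$. The reason is that the swap $M\times\Nbar\to N\times\Mbar$ intertwines the symplectic forms up to the overall sign $\omega_M-\omega_N\mapsto\omega_N-\omega_M$, compensated by the hermitian identification $\cL_M\otimes\cL_N^*\cong\overline{\cL_N\otimes\cL_M^*}$, under which the curvature also flips sign. So $T$ preserves $\ker L$ and $\ker\nabla$, hence induces a map on $D_{Lag}$. On middle-degree classes the rule $(\sigma\circ\tau)^T=(-1)^{|\sigma||\tau|}\tau^T\circ\sigma^T$ has sign $(-1)^{(m+n)(n+p)}$. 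When all dimensions are multiples of $4$, $m,n,p$ are even and the sign is $+1$, giving a genuine contravariant involution with $(\sigma^T)^T=\sigma$.

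The main obstacle is verifying that $T$ really commutes with $\nabla$ and $L$ when it is antilinear and interacts with the orientation of the swap. I would first check this on decomposable forms $\alpha_M\wedge\beta_N$, as in the proof of Theorem \ref{leibniz}, and then confirm that the graded-contravariance sign formula holds on such products by reordering the factors before fibre integration.
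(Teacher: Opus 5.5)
Your proposal is correct and follows essentially the same route as the paper, which obtains Theorem \ref{dcat} by combining several ingredients: the descent of $\circ$ via Theorem \ref{leibniz}, the concentration of $D^*$ in middle degree from Theorem \ref{Lagrangian}, the degree count $(m+n)+(n+p)-2n=m+p$, associativity from Proposition \ref{assoc}, adjoined identities, and the sign $(-1)^{|\sigma||\tau|}=(-1)^{(m+n)(n+p)}$ in the graded contravariance of $\sigma\mapsto\sigma^T$. One small correction: $\sigma\mapsto\sigma^T$ is antilinear and commutes with $\nabla$, hence with $\nabla^2=-iL$, so it actually anticommutes with $L$ (the real form $\omega_M-\omega_N$ is simply carried to $-(\omega_N-\omega_M)$), but this still preserves $\ker L$ and $\ker\nabla$, so your conclusion stands.
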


\begin{Remark}  Since $D^m(M,\cL) = \ker(\nabla) \subset \Gamma^m(M,\cL)$, we may view cohomology classes as forms; in other words, there is a map from the complex $D^*(M,\cL),$ consisting of the single group $D^m(M,\cL),$ with trivial differential, to the complex $\Gamma^*(M,\cL),$ which induces an isomorphism in cohomology.  Since this map is an inclusion, it respects the category structure.  In other words, the differential graded category $\Gamma^*$ is {\em formal} in the sense of being quasi-isomorphic to its cohomology category, and this quasi-isomorphism is given by a single map. \end{Remark} 
\begin{Remark}
The role of isotropics in quantization was recently investigated by Guillemin, Uribe,
and Wang~\cite{GUW}.  In our category $D^*$ they presumably correspond to exact forms.
\end{Remark}

In particular, let $\mathrm{pt}$ be the point, considered as a prequantum system. Consider the morphism spaces $ \mathrm{Mor}(\mathrm{pt},M),$ both in the complex $\Gamma^*(M,\cL)$ and its cohomology
 $D^m(M,\cL).$ 
 
In either case, given $\sigma\in \mathrm{Mor}(\mathrm{pt},M)$, we obtain an element
\[
  \sigma^T\circ\sigma \in \mathrm{Mor}(\mathrm{pt},\mathrm{pt}) = \C
\]
given by
\[
  \sigma^T\circ\sigma \;=\; \int_M c({\sigma} , \sigma).
\]

This pairing is nonzero only in middle dimension. In terms of dual currents, in the integral case, we will see below that this pairing is a dual of a type of Blattner-Kostant-Sternberg pairing on integral Lagrangian submanifolds of $M.$   Our results show that this pairing has cohomological content.  This fact might give an avenue for studying Lagrangian intersections.

\subsection{Promoting $\Gamma^*$ to a $\Z$-graded differential graded category}\labell{prom}  It is possible to regrade the category $\Gamma^*$ to obtain a $\Z$-graded differential graded category.  This also requires the twisting of the operation $\circ$ by a sign.  We summarize this construction briefly.

Let $(M^{2m}, \omega, \cL,\nabla)$ be a prequantum system. For any differential form $\sigma \in \Gamma^*(M,\cL),$ define a new grading $||\sigma||$ by

$$||\sigma|| = |\sigma| - m.$$

Note this is the grading given by the operator $-H,$ where $H$ is defined in (\ref{hdef}).  We denote the regraded complex by 

$$\Gamma^{-H}(M, \cL).$$

This regrading makes the composition $\circ$ have degree zero, which is not the case in the differential form grading.  Unfortunately, this change in grading destroys the Leibniz rule for the composition $\circ.$  To correct this, we define a new composition operation $\bullet.$  Suppose we are given three prequantum systems arising
from symplectic manifolds $M, N, P,$ with prequantum line bundles $\cL_M, \cL_N, \cL_P.$
Let $\sigma\in\Gamma^*(M\times\Nbar,\, \cL_M\otimes \cL_N^*)$ and
$\tau\in\Gamma^*(N\times\Pbar,\, \cL_N\otimes \cL_P^*)$. 
We define
$$\sigma \bullet \tau  = (-1)^{(m+n)|\tau|} \sigma \circ \tau.$$

It is then straightforward to check that the composition operator $\bullet$ satisfies the Leibniz rule with sign given by the regrading $||\cdot||.$

This regrading produces an anomaly in involutivity:  We have for $\sigma, \tau$ as above,

$$(\sigma \bullet \tau)^T = (-1)^{ (m+n)(n+p) }(-1)^{||\sigma||~||\tau||} \tau^T \bullet \sigma^T,$$

\noindent so involutivity holds when restricting to manifolds whose dimension is a multiple of four.

Then we obtain the following result:

\begin{reptheorem}{gammacat}[Second version]
There exists a $\Z$-graded differential graded category $\Gamma^{-H}$ whose objects are prequantum systems $(M,\omega,\cL,\nabla)$, and whose
morphisms $N\Rightarrow M$ between two such systems $(M,\omega_M,\cL_M,\nabla_M)$ and $(N,\omega_N,\cL_N,\nabla_N)$ are given by the complex
\[
  \Gamma^{-H}(N,M) := \Gamma^{-H}(M\times\Nbar,\, \cL_M\otimes \cL_N^*)
\]
with the composition law~$\bullet.$  Restricted to manifolds whose dimension is a multiple of $4,$ this category is involutive.

\end{reptheorem}

As in Remark \ref{meta} one may ask whether this involutivity anomaly can be corrected by some sort of metaplectic factor.

Since the Lagrangian category $\Gamma_{Lag}$ and the cohomology category $D^*$ are supported in a single dimension, it is not useful to regrade them.  The regrading by $||\cdot||$ simply shifts the Lagrangians to degree zero.

\subsection{Dual currents of $\Gamma^*(M,\cL)$ and $D^*(M,\cL)$ and the relation to the Weinstein "category"}\labell{reltow}

Let $(M,\omega,\cL,\nabla)$ be a prequantum system.  Let $j: C\to M$ be a compact, oriented submanifold of $M$, and let
$s\in\Gamma(\cL|_C).$
The $\cL$-valued de Rham current $\xi_{(C,s)}$ is defined by
\begin{equation}\labell{curdef}
  \xi_{(C,s)}(\eta) \;=\; \int_C c(s, j^*\eta)
\end{equation}

\noindent for any $\eta \in \Omega^*(M,\cL).$  Here we denote by $c$ the contraction on the fibres of $\cL$ induced by the hermitian structure.

The operator $\nabla$ induces a dual operator we also denote by $\nabla$ on the space of $\cL$-valued currents.  We now characterize the $\nabla-$closed currents:

\begin{proposition}\labell{prop:1.2}
Suppose $C \subset M$ is connected.  A nonzero current $\xi_{(C,s)}$ is closed (with respect to the dual of $\nabla$) if and only if
\begin{itemize}
  \item $C$ is isotropic,
  \item $\partial C = \emptyset$,
  \item $\nabla s = 0$.
\end{itemize}

\end{proposition}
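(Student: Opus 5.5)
The plan is to compute the dual operator explicitly via Stokes' theorem and then read off the three conditions. The dual of $\nabla$ acting on $\xi_{(C,s)}$ is the current $\eta\mapsto \xi_{(C,s)}(\nabla\eta)$, up to a sign depending on degree. So closedness means
\[
\int_C c(s, j^*\nabla\eta)=0 \quad\text{for all } \eta\in\Omega^*(M,\cL).
\]
Since the connection is unitary, $d\,c(s,j^*\eta)=c(\nabla s,j^*\eta)+c(s,j^*\nabla\eta)$ on $C$ (with $s$ of degree zero, so no sign). Stokes' theorem then gives
\[
\int_C c(s,j^*\nabla\eta)=\int_{\partial C} c(s,j^*\eta)-\int_C c(\nabla s,j^*\eta).
\]

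The "if" direction is now immediate. If $\partial C=\emptyset$ and $\nabla s=0$, both terms vanish for every $\eta$, so the current is closed. In this direction isotropy is not even needed; it is listed because closedness forces it, as explained next.

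For the converse, assume $\int_{\partial C} c(s,j^*\eta)=\int_C c(\nabla s,j^*\eta)$ for all $\eta$, and write $c=\dim C$.

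\emph{First, $\nabla s=0$.} Take $\eta$ of degree $c-1$ compactly supported in a tubular neighbourhood of an interior point of $C$, away from $\partial C$. The boundary term then vanishes, and $\int_C c(\nabla s,j^*\eta)=0$. Since $j^*$ is surjective onto compactly supported interior forms and the pairing $\Omega^1(C)\times\Omega^{c-1}(C)\to\Omega^c(C)$ composed with $c$ is nondegenerate, $\nabla s=0$ on the interior of $C$, hence everywhere by continuity.

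\emph{Second, $s$ is nowhere zero.} Because $C$ is connected, $s$ is parallel, and $\xi\neq0$, the section $s$ is nowhere zero.

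\emph{Third, $\partial C=\emptyset$.} With $\nabla s=0$, the identity becomes $\int_{\partial C}c(s,j^*\eta)=0$ for all $\eta$. Choosing $\eta$ localized near a boundary point, with $j^*\eta|_{\partial C}$ an arbitrary top form times a local section pairing nontrivially with $s$, gives a contradiction unless $\partial C=\emptyset$.

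\emph{Fourth, $C$ is isotropic.} A parallel section exists on $C$ only if the curvature of $\cL|_C$ vanishes. Indeed $0=\nabla^2 s=-i\,j^*\omega\, s$, and since $s$ is nonvanishing, $j^*\omega=0$.

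The step needing the most care is the localization argument producing $\nabla s=0$ and $\partial C=\emptyset$. It requires extending forms on $C$ (respectively, near $\partial C$) to ambient $\cL$-valued forms on $M$ with controlled support, and checking the sign conventions of the dual operator. I would handle the extension with a tubular neighbourhood and a bump function, using a local trivialization of $\cL$. The isotropy conclusion, by contrast, falls out directly from the curvature identity.
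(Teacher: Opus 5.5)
Your proof is correct and follows the paper's argument: the same Stokes computation $\int_C c(s,\nabla\eta)=\int_{\partial C}c(s,\eta)-\int_C c(\nabla s,\eta)$ yields $\partial C=\emptyset$ and $\nabla s=0$, and flatness of $\cL|_C$ then forces $C$ to be isotropic. Your version is more careful than the paper's in two places: you spell out the localization arguments, and you show explicitly, from connectedness and $\xi_{(C,s)}\neq 0$, that the parallel section $s$ is nowhere zero — a fact the paper uses tacitly both to rule out boundary and to conclude $j^*\omega=0$.
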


\begin{proof}
Let $\eta\in \Omega^*(M,\cL)$. Then
\begin{align*}
  \xi_{(C,s)}(\nabla \eta)
   &= \int_C c(s, \nabla\eta) \\
   &= \int_C dc (s,\eta) -  \int_{C}c(\nabla s, \eta) 
       \\
   &= \int_{\partial C}c(s, \eta) -  \int_Cc( \nabla s, \eta).
\end{align*}
This vanishes for all $\eta$ if and only if $\partial C=\emptyset$ and $\nabla s = 0$.

Since $s$ is a global covariant constant section of $\cL|_C,$ $\cL|_C$ must be flat, so that $C$ must be isotropic.

\end{proof}

Note that the condition that $\nabla^2 \xi_{(C,s)} = 0$ is that $\xi_{(C,s)}$ vanishes on any form in the image of $L.$  This is equivalent to $C$ being isotropic on the support of $s.$ There is a similar condition on $\Lambda,$ giving rise to coisotropic currents (see \cite{TY}, Lemma 4.1).  Thus

\begin{proposition} Suppose $C  \subset M$ is a connected submanifold of $M$ and $s$ is a section of $\cL|_C.$  A nonzero current $\xi_{(C,s)}$ is in the kernel of the dual of $L$ if and only if $C$ is isotropic on the support of $s,$ and in the kernel of the dual of $\Lambda$ if and only if $C$ is coisotropic on the support of $s.$  It is in the kernel of the dual of $\nabla$ if and only if $\partial C = \emptyset,$ $C$ is isotropic and integral, and the section $s$ is covariant constant.
\end{proposition}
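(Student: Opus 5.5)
The plan is to treat the three operators one at a time, pairing the current $\xi_{(C,s)}$ with test forms and working pointwise along $C$. Write $j\colon C\to M$ for the inclusion.

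\emph{Dual of $L$.} The dual of $L$ sends $\xi_{(C,s)}$ to the functional
\[
\eta\mapsto\int_C c(s,j^*(\omega\wedge\eta))=\int_C j^*\omega\wedge c(s,j^*\eta).
\]
\begin{itemize}
\item If $C$ is isotropic on the support of $s$, then $s\, j^*\omega\equiv 0$, so the functional vanishes.
\item Conversely, suppose $j^*\omega\neq0$ at some point $x$ with $s(x)\neq0$. Choose a form $\eta$ supported in a small neighbourhood of $x$ with $c(s,j^*\eta)$ a real multiple of a form $\beta$ such that $j^*\omega\wedge\beta$ is a positive multiple of a volume form near $x$. Such a $\beta$ exists by nondegeneracy of the wedge pairing on $C$: take a local frame of $\cL$ and set $\eta = \phi\,\tilde\beta\otimes e$ with $e$ aligned with $s$. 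The integral is then nonzero.
\end{itemize}
This is the same localization argument already used for the pointwise identity (\ref{tynn}).

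\emph{Dual of $\Lambda$.} The dual of $\Lambda$ gives $\eta\mapsto\int_C c(s,j^*(\Lambda\eta))$. Using (\ref{lam}), $\Lambda=\star_s L\star_s$, this becomes a pointwise linear-algebra question at each $x$ with $s(x)\neq0$: when does
\[
\eta_x\mapsto\langle \mathrm{vol}_{T_xC}^*,\Lambda\eta_x\rangle
\]
vanish identically? I would simply invoke \cite{TY}, Lemma 4.1, which characterizes exactly this condition, in the untwisted setting, as $T_xC$ being coisotropic. Twisting by $s$ is harmless because $\Lambda$ is $C^\infty(M)$-linear and commutes with multiplication by sections. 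The same localization then shows that vanishing of the functional is equivalent to $C$ being coisotropic on the support of $s$.

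\emph{Dual of $\nabla$.} This case follows from Proposition \ref{prop:1.2}. Closedness forces $\partial C=\emptyset$ and $\nabla s=0$. Since $C$ is connected, a nonzero covariant constant section is nowhere zero. Hence $\cL|_C$ is trivialized as a bundle with connection, which says both that $j^*\omega=0$ (isotropy) and that $C$ is integral in the sense of the previous section. The converse is the computation in the proof of Proposition \ref{prop:1.2} read backwards.

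The main obstacle is the coisotropic case. There one must check that the pointwise statement in \cite{TY} really is an if-and-only-if at the level of a single tangent space. If that reference only gives one direction, I would prove the other by a direct computation in Darboux-adapted linear coordinates: put $T_xC$ in normal form relative to $\omega_x$, and exhibit an explicit $\eta_x$ with nonzero pairing whenever $T_xC^\omega\not\subset T_xC$.
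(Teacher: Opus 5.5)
Your proposal is correct and follows essentially the paper's route: the paper likewise reduces the $L$ case to the observation that $\xi_{(C,s)}$ must vanish on $\operatorname{im} L$ (your localization argument makes this precise), cites \cite{TY}, Lemma 4.1 for the $\Lambda$ case, and obtains the $\nabla$ case from Proposition \ref{prop:1.2}, with integrality coming from the nowhere-vanishing covariant constant section. Your worry about the coisotropic direction is easily settled: the pointwise condition is $\Pi\wedge \mathrm{vol}_{T_xC}=0$, i.e.\ $\Pi$ vanishes on the annihilator of $T_xC$, which is the $\omega$-dual image of $(T_xC)^\omega$, and this is exactly coisotropy of $T_xC$.
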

 
 \begin{Remark}[Coisotropics]  One can also consider the complex of differential forms in the kernel of $\Lambda,$ and its cohomology with respect to $\nabla^\Lambda.$  The symplectic star operator $\star_s$ gives an isomorphism of this complex with the complex $\Gamma.$  One can show that the symplectic star operation also factors across the composition operation $\circ.$  So we obtain a coisotropic complex.  If we are given a current $\xi_{(C,s)}$ as above, with $C$ now coisotropic, we calculate as in the proof of Proposition \ref{prop:1.2}, for any $\eta \in \Omega^{{\rm dim~} C +1}(M,\cL),$
 
\begin{align*} 
  \xi_{(C,s)}(\nabla^\Lambda \eta)
   &= \int_C c(s,(\nabla \Lambda-\Lambda \nabla) \eta ) \\
   &= - \int_Cc(s, \Lambda \nabla \eta ) +
 \int_{\partial C}c(s,\Lambda \eta ) -  \int_Cc(\nabla s, \Lambda\eta ) \end{align*}
The first term on the right side of this equation vanishes due to coisotropy of $C;$ the second term vanishes if $C$ has coisotropic boundary.  If $C$ has dimension $m$, then since $\Lambda$ is surjective onto forms of dimension $m-1,$ the third term vanishes if and only if $s$ is covariant constant, so that $C$ must be an integral Lagrangian submanifold. But it is not so easy to see what the third term gives where ${\rm dim}~ C > m.$  Note that the symplectic star duals of the integral isotropic currents $\xi_{(C,s)}$ are also currents in this complex, and its cohomology, but they are not associated to submanifolds as in (\ref{curdef}).
 \end{Remark}
 
 The complex $\Gamma^*(M,\cL)$ is therefore dual to oriented isotropic submanifolds of $M$, equipped with sections of the prequantum line bundle, and considered as de Rham currents.  The cohomology $D^m(M,\cL)$ is dual to oriented integral Lagrangian submanifolds of $M,$ equipped with covariant constant sections of the prequantum line bundle.\footnote{Integral isotropic submanifolds also appear in this setting, but they  presumably correspond to exact currents.}
These submanifolds play an important role in geometric quantization, where Lagrangian
submanifolds arise as fibres of a Lagrangian foliation
\[
  \pi\colon M^{2m}\to B^m
\]
of $M$ over a base $B^m$; the integral fibres, known in this case as Bohr--Sommerfeld
fibres, form a basis for geometric quantization.
The complex $(\Gamma^*(M,\mathcal{L}),\nabla)$ and its cohomology
furnish a natural setting where these submanifolds arise (as duals).

We briefly consider what the composition morphism $\circ$ of our dual categories would mean for currents of the form $\xi_{(C,s)},$ where $C$ is a Lagrangian (or even isotropic) submanifold of a product $M\times \Nbar,$ and $s$ is a section of the bundle $(\cL_M \otimes \cL_N^*)|_C$ (any section, if we are dealing with $\Gamma^*(M\times \Nbar,\cL_M \otimes \cL_N^*)$; a covariant constant section, in the case of $D^*(M\times \Nbar,\cL_M \otimes \cL_N^*).$)

Suppose we are given three prequantum systems arising
from symplectic manifolds $M, N, P,$ with prequantum line bundles $\cL_M, \cL_N, \cL_P,$ and
the currents $\xi_{(C,s)}, \xi_{(C',s')}$ supported on isotropic submanifolds $C \subset M \times \Nbar,$
$C' \subset N \times \Pbar$ and sections $s \in \Gamma ((\cL_M \otimes \cL_N^*)|_C), s' \in \Gamma ((\cL_N \otimes \cL_P^*)|_{C'}).$  These sections are general for the category $\Gamma^*$ and covariant constant for the case of the category $D^*.$

These currents give us currents on $M \times N \times P$ supported on $C \times P$ and $M \times C'.$ Taking the product of these two currents gives us a current supported on points $C \star C' \subset M \times N \times P$ of the form $(m,n,p)$ where $(m,n) \in C$ and $(n,p) \in C'$.  Projecting $M \times N \times P$ to $M \times P$ gives us, in the case where $C$ and $C'$ are composable, a current supported on the Weinstein composition 
$$C \circ C' = \{(m,p) \in M \times P:  (m,n) \in C {\rm ~and~} (n,p) \in C' {\rm~for~some~} n \in N\}.$$  

Thus the dual composition operation is closely related to Weinstein's composition, where it is defined.  But in the case where $C$ and $C'$ are not composable, the composition in the dual category is still well defined; if we imagine the currents  $\xi_{(C,s)}, \xi_{(C',s')}$ as being smoothed out to differential forms, the composite differential form is still supported near the set $C \circ C',$ but the value of the differential form is obtained by integration over the set of points of $N$ lying above a point of $C \circ C'$ in $C \star C'.$  Thus the fact that we have a linear category, rather than a category of manifolds, allows us to sum, or integrate, elements of our category, to deal with more complicated intersections.

\begin{Remark}\labell{halfdrem} To actually compose currents, it is necessary to smooth them to differential forms.  Each such differential form gives rise to a current via the map 

$$\xi \to \xi^T \circ (\cdot)$$

\noindent where we consider elements of $D^m(M,\cL)$ as morphisms from pt to $M.$  Lifting a current to an element of $D^*$ involves choices, so that the composition of currents may involve "decorating" the underlying isotropic with further data.  The appearance of half densities, or half forms, in the BKS pairing may possibly be interpreted as a form of that type of choice.\end{Remark}

Remark \ref{halfdrem} raises the following question about the Weinstein "category":

\begin{Question}  Since the categories $\Gamma^*$ and $D^*$ are true categories, is it possible to redefine composition in some (possibly decorated) version of the linear Weinstein "category" by deformations so it is defined even for noncomposable morphisms, giving a true category?\end{Question}

\begin{Remark} {Other constructions giving categories associated to Weinstein's construction are \cite{K,WW}.}\end{Remark}

\section{Application to geometric quantization}\labell{gq}
Geometric quantization would assign to a prequantum system $(M^{2m},\omega,\mathcal{L},\nabla)$
a vector space $Q(M),$ ideally equipped with a hermitian structure,  functorial under canonical
transformations, or perhaps some subset of these. The expectation that, when $M$ is
compact, $Q(M)$ should be finite dimensional, arising from many examples in physics and
representation theory, excludes the obvious vector space $\Gamma(\cL)$. Instead the
prequantum data must be supplemented by a polarization that cuts down $\Gamma(\cL)$ to some
subspace.  

One example of a polarization is a \emph{complex polarization}, where $(M,\omega)$ is Kahler, $\cL$ a
holomorphic line bundle, and $\nabla$ the Chern connection. In that case the quantization in this
holomorphic polarization is taken as the space of holomorphic sections of $\cL$
\[
  Q(M) = H^0(M,\mathcal{L}).
\]

Another type of polarization is a \emph{real polarization}, a foliation of $M$ by
Lagrangian subvarieties given by a map $\pi\colon M\to B$. In this case the quantization $Q(M)$ is
given by distributions associated to covariant constant sections of $\mathcal{L}$ localized on those leaves $\pi^{-1}(b)$ of the foliation
where $(\mathcal{L},\nabla)|_{\pi^{-1}(b)}$ is trivial as a line bundle with connection.
These are the integral isotropic leaves of the foliation, called the Bohr--Sommerfeld leaves.

Where the foliation is fibering, Sniatycki's Theorem \cite{S} shows that this quantization arises
from middle-dimensional sheaf cohomology associated with local covariant constant sections
of $\mathcal{L},$ and that the sheaf cohomology vanishes at all other degrees.  In view of the vanishing theorem of Theorem \ref{Lagrangian}, this shows an intriguing relation to elements of the (dual of) our dual category $D^*.$

To begin with there would seem to be no relation between such a count of integral
Lagrangian leaves of a foliation and holomorphic sections of a line bundle. But in many
compact examples, it turns out that at least the dimensions of the spaces are equal.   Some examples are toric varieties \cite{fulton}, coadjoint
orbits~\cite{GSGC}, moduli spaces of vector bundles~\cite{JW}, and, in the noncompact
setting, Fourier Series and the Peter--Weyl Theorem~\cite{CW}.\footnote{In the noncompact case invariance of polarization is given by a natural isomorphism of the quantizations in the two different polarizations.}  In general there is an expectation that quantization should depend only
on the prequantum system and not on the polarization. But no such theorem exists in general.

\subsection{The Kahler case} We now show how our cohomology $D^*(M ,\mathcal{L})$, which we saw was associated with
integral Lagrangian submanifolds, is related, in the Kahler case, to geometric
quantization in the Kahler polarization. This provides a bridge between the world of Lagrangian submanifolds and the world of holomorphic quantization.

Let $(M^{2m}, \omega, \cL, \nabla)$ be a prequantum system where $M^{2m}$ is a compact connected symplectic manifold of dimension $2m.$  Recall that the complex $(\Gamma^*(M,\cL),\nabla)$ vanishes in degrees less than $m$ (or greater
than $2m$). So
\[
  D^m(M,\cL)= H^m\bigl(\Gamma^*(M,\mathcal{L}),\nabla\bigr)
  \;=\; \ker \nabla\big|_{\Omega^m(M,\mathcal{L})}.
\]

Recall the Lefschetz $\sla(2,\R)$ representation given by generators
\begin{align*}
  L &= \omega\wedge,\\
  \Lambda &= \star_s L \star_s\\
  H &= [ \Lambda,L] 
\end{align*}
 
where $H$ is the grading operator: $H|_{\Omega^k(M,\cL)} = m - k$.

In our twisted case, we also had operators
\[
  \nabla,\qquad \nabla^\Lambda =    [\nabla,\Lambda]
\]
with
\[
  \{\nabla,\nabla\} = - 2iL,\qquad \{\nabla^\Lambda,\nabla^\Lambda\} = 2i\Lambda.
\]
So as we saw the standard Lefschetz $\sla(2,\R)$ action on differential forms on a symplectic manifold is promoted to an action of the superalgebra ${\mathfrak {osp}} (1|2)$ on the twisted differential forms in the prequantum case.

In middle dimension we have, for any $x \in \Omega^m(M,\cL),$\footnote{The first implication is because $\nabla^2 =- i L;$  the second, by Proposition \ref{middimLlam}; the third, since $\nabla^\Lambda= [\nabla,\Lambda].$}
\[
  \nabla x = 0 \;\Rightarrow\; L x = 0 \;\Rightarrow\; \Lambda x = 0 \;\Rightarrow\;
  \nabla^\Lambda  x = 0.
\]
 
Consider now the case where $(M,\omega)$ is Kahler and $(\mathcal{L},\nabla)$ is a
holomorphic line bundle equipped with Chern connection $\nabla$.

Then we have the Dolbeault decomposition
\[
  \nabla = \nabla' + \nabla'',
\]
and its adjoint
\[
  \nabla^* = \nabla'^{\,*} + \nabla''^{\,*}
\]
where we have denoted by $\cdot^*$ the adjoint in the Kahler metric.

\begin{definition}\label{def:Qdef}

We define
\[
  Q(M,\mathcal{L}) \;=\; \ker\nabla \cap \ker\nabla^*.
\]
Since $\nabla^2 = - iL$ and $(\nabla^*)^2 =  i\Lambda$, $Q(M,\cL)$ consists of middle dimensional forms.\footnote{Of course this also follows from Theorem \ref{Lagrangian}.}

We write $Q(M) = Q(M,\mathcal{L})$ where there is no chance of confusion.
\end{definition}

\begin{Remark} It may be possible to view the two conditions giving $Q(M)$ in Definition \ref{def:Qdef} as an equation of motion and a gauge condition:  The condition $\nabla x = 0$ for $x\in \Gamma^m(M,\cL)$ is that
$x \in D^m(M,\cL)$, and is cohomological, while the condition $\nabla^* x = 0$ is an analog of covariant gauge, which depends on a choice of metric.  In that spirit, it may be possible to view a real polarization as giving a kind of axial gauge condition, perhaps restricting to forms which are basic with respect to the map $\pi: M \to B$ giving the real polarization.  \end{Remark}

To study $Q(M)$, we study the Laplacians
\begin{align*}
  \Delta &= \nabla^*\nabla + \nabla\nabla^*,\\
  \Delta_{\bar\partial} &= (\nabla'')^*\nabla'' + \nabla''(\nabla'')^*,\\
  \Delta_\partial &= (\nabla')^*\nabla' + \nabla'(\nabla')^*.
\end{align*}
Then $Q(M) = \ker\Delta \subset D^m(M,\mathcal{L})$.

\begin{proposition}\label{prop:laplacians}
The Laplacians $\Delta$, $\Delta_{\bar\partial}$, $\Delta_\partial$ satisfy
\begin{enumerate}
  \item[(i)] $ \Delta_{\bar\partial} + \Delta_\partial = \Delta $,
  \item[(ii)] $\Delta_{\bar\partial} - \Delta_\partial = - H$.
\end{enumerate}
\end{proposition}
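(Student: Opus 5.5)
The plan is to reduce both identities to twisted Kähler identities for the Chern connection, combined with the graded Jacobi identity. Part (ii) in particular comes out as a form of the Bochner--Kodaira--Nakano identity, with the curvature term equal to $[L,\Lambda]=-H$ (using item (1) of Lemma \ref{comms}).

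\emph{Step 1 (twisted Kähler identities).} I will first establish
\[
[\Lambda,\nabla''] = -i(\nabla')^*,\qquad [\Lambda,\nabla'] = i(\nabla'')^*,
\]
with signs normalized to agree with the conventions of \cite{TY} and with Lemma \ref{comms}. The untwisted identities $[\Lambda,\bar\partial]=-i\partial^*$ and $[\Lambda,\partial]=i\bar\partial^*$ are classical \cite{gh,Wells}. To pass to the twisted case I will use the device from the proof of Lemma \ref{nablalambda}. Locally $\cL$ has a holomorphic frame, and in that frame $\nabla=d+\theta$ with $\theta=\partial\log h$ of type $(1,0)$. So $\nabla''=\bar\partial$ and $\nabla'=\partial+\theta\wedge$, and the twisted adjoints differ from the untwisted ones by zeroth-order terms. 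It then suffices to check pointwise identities such as $[\Lambda,a\wedge]=i(\bar a\wedge)^*$ for a $(1,0)$-covector $a$, which are linear algebra in a unitary frame. The main obstacle is pinning down these sign conventions consistently with $\Lambda=\star_s L\star_s$ and $H=m-k$. I would fix them by checking against item (10) of Lemma \ref{comms} on the $(1,0)+(0,1)$ decomposition of $\nabla^\Lambda=[\nabla,\Lambda]$.

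\emph{Step 2 (integrability).} Since the Chern connection has curvature of type $(1,1)$, we have $(\nabla')^2=(\nabla'')^2=0$. Comparing types in $\nabla^2=-iL$ then gives
\[
\{\nabla',\nabla''\}=-iL.
\]

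\emph{Step 3 (part (i)).} Expanding $\Delta=\{\nabla'+\nabla'',(\nabla')^*+(\nabla'')^*\}$ gives
\[
\Delta=\Delta_\partial+\Delta_{\bar\partial}+\{\nabla',(\nabla'')^*\}+\{\nabla'',(\nabla')^*\}.
\]
By Step 1 the first cross term is $-i\{\nabla',[\Lambda,\nabla']\}$. By the graded Jacobi identity this equals $-\tfrac{i}{2}[\{\nabla',\nabla'\},\Lambda]$ up to sign, which vanishes by Step 2. The other cross term vanishes in the same way, using $(\nabla'')^2=0$.

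\emph{Step 4 (part (ii)).} By Step 1,
\[
\Delta_{\bar\partial}=\{\nabla'',(\nabla'')^*\}=-i\{\nabla'',[\Lambda,\nabla']\},\qquad
\Delta_\partial=\{\nabla',(\nabla')^*\}=i\{\nabla',[\Lambda,\nabla'']\}.
\]
Hence
\[
\Delta_{\bar\partial}-\Delta_\partial=-i\bigl(\{\nabla'',[\Lambda,\nabla']\}+\{\nabla',[\Lambda,\nabla'']\}\bigr).
\]
The graded Jacobi identity turns the bracketed sum into $[\Lambda,\{\nabla',\nabla''\}]$, and by Step 2 this is $[\Lambda,-iL]$. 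Therefore
\[
\Delta_{\bar\partial}-\Delta_\partial=-i\,[\Lambda,-iL]=-[\Lambda,L]=-H,
\]
where the last equality is item (1) of Lemma \ref{comms}.
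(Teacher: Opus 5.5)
Your proposal is correct and is essentially the paper's argument. It uses the same twisted Kahler identities and the vanishing of $(\nabla')^2$ and $(\nabla'')^2$ to kill the cross terms in (i); for (ii) it is exactly the graded-Jacobi computation $-i[\Lambda,\{\nabla',\nabla''\}]=-[\Lambda,L]=-H$ in the Remark after the proof (the proof itself cites Bochner--Kodaira--Nakano), and your holomorphic-frame reduction plays the role of the paper's unitary-gauge footnote. One small sign slip: with your normalization $[\Lambda,\partial]=i\bar\partial^*$, the pointwise identity is $[\Lambda,a\wedge]=-i(\bar a\wedge)^*$, not $+i$, and that minus sign is exactly what matches the zeroth-order term in $(\nabla'')^*=\bar\partial^*-(\bar\theta\wedge)^*$ in your frame.
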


\begin{proof} We have
\begin{align*}
  \Delta &= (\nabla'+\nabla'')^*(\nabla'+\nabla'') + (\nabla'+\nabla'')(\nabla'+\nabla'')^*\\
           &= \Delta_\partial + \Delta_{\bar\partial}
             + \nabla''^{\,*}\nabla' + \nabla'\nabla''^{\,*}\\
           &\qquad + \nabla'^{\,*}\nabla'' + \nabla''\nabla'^{\,*}\\
           &= \Delta_\partial + \Delta_{\bar\partial}
             + \{\nabla'^{\,*},\nabla''\} + \{\nabla''^{\,*},\nabla'\}.
\end{align*}
But the Kahler identities (See e.g. \cite{demailly} p.~329, 1.1~(c,d)) are\footnote{These follow from the standard (untwisted) Kahler identities by a method similar to that used in the proof of Lemma \ref{nablalambda}. Restrict to a ball where the bundle $\cL$ is trivialized.  To prove the first identity in (\ref{ki}), put
$E(\nabla) = [\Lambda,\nabla''] + i\,\nabla'^{\,*},$ so that the assertion is
$E(\nabla)=0,$ and $E(d)=0$ is the untwisted case. In a local unitary frame
$\nabla = d+A$ with $A$ purely imaginary, and
$E(\nabla)-E(d) = [\Lambda,A^{0,1}\wedge] + i\,(A^{1,0}\wedge)^*$
is $C^\infty$-linear; hence $E(\nabla)$ at a point depends only on the value of $A$ there.
Given $p\in M,$ choose $\phi$ real with $-i\,d\phi(p) = A(p)$ and set
$\nabla_\phi = e^{i\phi}\,d\,e^{-i\phi} = d - i\,d\phi.$ As $\Lambda$ is
$C^\infty$-linear and $e^{i\phi}$ is unitary, conjugation passes through both
$\Lambda$ and the adjoint, so $E(\nabla_\phi) = e^{i\phi}E(d)e^{-i\phi} = 0.$
Since $\nabla$ and $\nabla_\phi$ have the same connection form at $p,$
$E(\nabla)(p) = 0.$ The second identity follows by a similar argument.}
\begin{equation}\labell{ki}
  \nabla'^{\,*} = i[\Lambda,\nabla''],\qquad \nabla''^{\,*} = -i[\Lambda,\nabla'].
\end{equation}
So
\begin{equation}\labell{keq}\begin{split}
  \{\nabla'^{\,*},\nabla''\} + \{\nabla''^{\,*},\nabla'\}
   &= i\,\{[\Lambda,\nabla''],\nabla''\} - i\,\{[\Lambda,\nabla'],\nabla'\}\\
   &= i\bigl(\Lambda \nabla''\nabla'' - \nabla''\Lambda \nabla'' + \nabla''\Lambda \nabla'' - \nabla''\nabla''\Lambda\bigr)\\
   &\quad - i\bigl(\Lambda \nabla'\nabla' - \nabla'\Lambda \nabla' + \nabla'\Lambda \nabla' - \nabla'\nabla'\Lambda\bigr).
\end{split}\end{equation}
Since $\nabla^2 = -i\omega \wedge $ and $\omega$ is of type $(1,1),$  the operators $(\nabla')^2$ and $(\nabla'')^2,$ which are wedge products with the vanishing $(2,0)$ and $(0,2)$ components of the curvature, vanish, so the right hand side of (\ref{keq}) vanishes.

(ii) is the Bochner-Kodaira-Nakano identity; see e.g. \cite{demailly}, Chapter VII, p. 330, Thm~1.2, in the torsion-free case.  \end{proof}
\begin{Remark} Alternatively, (ii) follows from the Kahler identities (\ref{ki}):
these give $\Delta_\partial = i\{\nabla',[\Lambda,\nabla'']\}$ and
$\Delta_{\bar\partial} = -i\{\nabla'',[\Lambda,\nabla']\},$ so
$\Delta_{\bar\partial} - \Delta_\partial 
 = -i\bigl(\{\nabla'', [\Lambda,\nabla']\}+ \{\nabla',[\Lambda,\nabla'']\}\bigr)
 = -i\,[\Lambda,\{\nabla',\nabla''\}].$
Since $(\nabla')^2=0$ and $(\nabla'')^2=0,$ we have $\{\nabla',\nabla''\} = \nabla^2 = -iL,$
so that $\Delta_{\bar\partial}- \Delta_\partial=-  i[\Lambda,-iL] = - [\Lambda,L] = - H.$ 
\end{Remark}

Hence
\[
  \Delta = 2\,\Delta_{\bar\partial} +  H,
\]
and therefore, since for middle dimensional forms $H = 0,$  we obtain:

\begin{corollary}\label{cor:Q-kerdbar}
The quantization $Q(M)$ is given by
\[
  Q(M) \;=\; \ker \Delta_{\bar\partial}|_{\Omega^m(M,\cL)}.
\]
\end{corollary}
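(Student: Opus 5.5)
Corollary \ref{cor:Q-kerdbar} follows directly from Proposition \ref{prop:laplacians}, together with the fact, noted in Definition \ref{def:Qdef}, that $Q(M)$ consists only of middle dimensional forms. I will prove the two inclusions between $Q(M)$ and $\ker\Delta_{\bar\partial}|_{\Omega^m(M,\cL)}$ separately.

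First I establish the operator identity. Adding (i) and (ii) of Proposition \ref{prop:laplacians} gives $2\Delta_{\bar\partial} = \Delta - H$, so $\Delta = 2\Delta_{\bar\partial} + H$. On $\Omega^m(M,\cL)$ the operator $H$ acts by $m-m=0$, so there
\[
\Delta|_{\Omega^m(M,\cL)} = 2\,\Delta_{\bar\partial}|_{\Omega^m(M,\cL)} .
\]
Both $\Delta$ and $\Delta_{\bar\partial}$ preserve form degree, so this is an identity of operators on $\Omega^m(M,\cL)$.

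Next I identify $Q(M)$ with $\ker\Delta$ restricted to degree $m$. If $x\in Q(M)$, then $\nabla x=0$ and $\nabla^*x=0$, so $\Delta x=0$. Conversely, if $\Delta x=0$, then, since $M$ is compact and the $L^2$ pairing is the Kahler one,
\[
0=\langle \Delta x,x\rangle=\|\nabla x\|^2+\|\nabla^*x\|^2 ,
\]
so $x\in\ker\nabla\cap\ker\nabla^*$. Thus $Q(M)=\ker\Delta$. Applying this to each homogeneous component, and using that $\Delta$ preserves degree, I would then need the components of any element of $Q(M)$ outside degree $m$ to vanish.

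This vanishing is the main point to check. Suppose $x\in Q(M)$ is homogeneous of degree $k$. From $\nabla x=0$ we get $Lx=i\nabla^2x=0$. From $\nabla^*x=0$ we get $\Lambda x=-i(\nabla^*)^2x=0$. So $x$ is killed by both $L$ and $\Lambda$, and by the $\sla(2,\R)$ decomposition used in Proposition \ref{prop:1.1} and Proposition \ref{middimLlam}, it lies in trivial representations. There $H=0$, forcing $k=m$.

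Combining the steps gives
\[
Q(M)=\ker\Delta|_{\Omega^m(M,\cL)}=\ker\Delta_{\bar\partial}|_{\Omega^m(M,\cL)} .
\]
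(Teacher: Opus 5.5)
Your proof is correct and follows the paper's route. You combine (i) and (ii) of Proposition \ref{prop:laplacians} to get $\Delta = 2\Delta_{\bar\partial} + H$, note that $H=0$ in degree $m$, and use that $Q(M)=\ker\Delta$ consists only of middle-dimensional forms. You also write out the $L^2$ argument for $Q(M)=\ker\Delta$ and the degree-concentration step, both of which the paper simply asserts; the latter could be shortened to $Hx=[\Lambda,L]x=0$.
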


Note that $Q(M)$ is a hermitian vector space with inner product given by

\[
  \langle \xi,\eta\rangle \;=\; \int_M c(\xi ,*\eta) \in \C
\]
for $\xi,\eta\in \ker\nabla\cap \ker \nabla^*,$ where $*$ is the (Kahler) Hodge star operator.

We now have the following result, which gives a bridge between the dual category, which
lives in the Lagrangian world, and holomorphic quantization.

\begin{theorem}\label{kahlergq}
Let $(M,\omega)$ be a compact Kahler manifold, and let $(\mathcal{L},\nabla)$ be a
hermitian holomorphic line bundle with Chern connection of curvature $\omega.$ Let $k$ be sufficiently large
and write $\nabla_k$ for the connection on $\cL^k$ induced by $\nabla.$  Define
\[
  Q(M,\cL^k ) \;=\; \ker(\nabla_k)\cap \ker(\nabla_k^*) \;\subset\; \Gamma^*(M,\mathcal{L}^k).
\]
\noindent where we continue to write $\nabla_k$ for the operator induced by the connection $\nabla_k$ on $\Gamma^*(M,\mathcal{L}^k)$ and $\nabla_k^*$ for its adjoint.

Then
\[
  Q(M,\cL^k) \;\simeq\; H^0(M,\, \mathcal{L}^k \otimes K_M  ),
\]
where $K_M$ is the canonical bundle of $M$.
\end{theorem}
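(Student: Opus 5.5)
By Corollary \ref{cor:Q-kerdbar}, applied to the prequantum system $(M,k\omega,\cL^k,\nabla_k)$, the space $Q(M,\cL^k)$ is the space of $\Delta_{\bar\partial}$-harmonic $\cL^k$-valued $m$-forms. The plan is to decompose this space by bidegree, identify each piece with a Dolbeault cohomology group, and kill every piece except the top holomorphic one by Serre vanishing for $k\gg 0$.

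\textbf{Step 1: rescale the identities.} The curvature of $\nabla_k$ is $k\omega$, so Lemma \ref{comms} and Proposition \ref{prop:laplacians} apply after replacing $L$ by $kL$. This gives
\[
\Delta=\Delta_{\bar\partial}+\Delta_\partial, \qquad \Delta_{\bar\partial}-\Delta_\partial=-c_k H
\]
for a nonzero constant $c_k$. The exact constant does not matter, since $H=0$ on $m$-forms.

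\textbf{Step 2: $Q$ is the middle-degree $\Delta_{\bar\partial}$-kernel.}
\begin{itemize}
\item[(a)] On a compact manifold, $\ker\nabla_k\cap\ker\nabla_k^*=\ker\Delta$. By Definition \ref{def:Qdef}, or Theorem \ref{Lagrangian}, such forms have degree $m$.
\item[(b)] On $\Omega^m(M,\cL^k)$ we have $\Delta=2\Delta_{\bar\partial}$. Hence $Q(M,\cL^k)=\ker\Delta_{\bar\partial}|_{\Omega^m(M,\cL^k)}$.
\item[(c)] The constraint that $Q$ lie inside $\Gamma^*$ is automatic. Any $x$ in this kernel satisfies $\nabla_k x=0$, and then $kLx=i\nabla_k^2x=0$.
\end{itemize}

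\textbf{Step 3: bidegree decomposition.} The operator $\Delta_{\bar\partial}$ preserves bidegree. So
\[
\ker\Delta_{\bar\partial}|_{\Omega^m}=\bigoplus_{p+q=m}\mathcal H^{p,q}(M,\cL^k).
\]
By Hodge theory for the $\bar\partial$-operator of the holomorphic bundle $\cL^k$ (Dolbeault isomorphism),
\[
\mathcal H^{p,q}(M,\cL^k)\cong H^q(M,\Omega^p_M\otimes\cL^k).
\]

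\textbf{Step 4: vanishing for large $k$.} Since $\omega$ is a Kahler form representing $2\pi c_1(\cL)$, the bundle $\cL$ is positive, hence ample by Kodaira embedding. Serre vanishing applied to each of the finitely many coherent sheaves $\Omega^p_M$, $0\le p\le m$, gives a uniform $k_0$ such that
\[
H^q(M,\Omega^p_M\otimes\cL^k)=0 \quad\text{for all } q>0,\ k\ge k_0.
\]

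\textbf{Step 5: conclusion.} Only the term $(p,q)=(m,0)$ survives, and it is
\[
H^0(M,\Omega^m_M\otimes\cL^k)=H^0(M,\cL^k\otimes K_M).
\]
Concretely, a $\Delta_{\bar\partial}$-harmonic $(m,0)$-form is just a holomorphic section of $K_M\otimes\cL^k$. The isomorphism is therefore the identity on these forms.

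\textbf{Main obstacle.} The point needing the most care is the vanishing in Step 4 for bidegrees with $p<m$. Kodaira–Nakano vanishing only covers $p+q>m$, so it does not apply at total degree $m$, and one must appeal to Serre vanishing. This is exactly why the statement requires $k$ sufficiently large. The secondary check is the rescaling in Step 1: that Proposition \ref{prop:laplacians} still gives $\Delta=2\Delta_{\bar\partial}$ in middle degree for the rescaled curvature $k\omega$.
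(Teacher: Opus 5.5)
Your proposal is correct and follows the paper's proof. In both, Corollary \ref{cor:Q-kerdbar} is applied to $\cL^k$, the middle-degree $\Delta_{\bar\partial}$-kernel is split by bidegree and identified with $\bigoplus_{p+q=m} H^q(M,\Omega^p_M\otimes\cL^k)$ by Hodge theory, and Serre vanishing removes every $q>0$ term for large $k$. Your extra checks (the rescaled constant in the Bochner--Kodaira--Nakano identity, ampleness of $\cL$, a uniform $k_0$ over the finitely many $p$) make explicit what the paper leaves implicit.
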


\begin{proof}
We have, by Corollary \ref{cor:Q-kerdbar} applied to the line bundle $\cL^k,$
\[
  Q(M,\cL^k) = \ker\bigl(\Delta_{\bar\partial_k}|_{\Omega^m(M,\cL^k)}\bigr)
       \subset \Omega^m(M,\mathcal{L}^k)
       = \bigoplus_{p+q=m}\Omega^{p,q}(M,\mathcal{L}^k)
\]

\noindent where 
$$ \Delta_{\bar\partial_k} = (\nabla_k'')^*\nabla_k'' + \nabla_k''(\nabla''_k)^*.$$

By the Hodge Theorem for $\Delta_{\bar\partial_k}$\footnote{See e.g. Wells \cite{Wells}, p. 151.}

\[
  \ker\bigl(\Delta_{\bar\partial_k}|_{\Omega^m(M,\cL^k)}\bigr) = \bigoplus_{p+q=m} H^q(M, {\bf\Omega}^p_M \otimes\mathcal{L}^k).
\]

\noindent where ${\bf\Omega}^*_M$ is the sheaf of holomorphic differential forms on $M.$

But for $k$ sufficiently large, $$H^q(M,\,{\bf\Omega}^p_M \otimes\mathcal{L}^k) =0 {\rm ~for~} q\neq0$$ for all $p,$ by Serre's vanishing theorem.

Hence, for $k$ sufficiently large,
\[
  Q(M,\cL^k)= H^0(M,\,{\bf\Omega}^m_M \otimes\mathcal{L}^k) = H^0(M,\,\mathcal{L}^k \otimes K_M).
\]

\end{proof}

\subsection{Beyond the Kahler case?} We have seen that our definition of $Q(M)$ using both $\nabla$ and $\nabla^*$ in the Kahler case gives a relation between the Lagrangian and holomorphic worlds.  We may wonder if something similar happens for general symplectic manifolds, which can always be equipped with a (not necessarily integrable) almost complex structure compatible with the symplectic form.
\begin{question}\labell{acs}
Let $(M,\omega,\cL,\nabla)$ be a prequantum system with $M$ compact and connected.
Let $J$ be an almost complex structure, compatible with $\omega$, and let $\bar\partial_J$
be the corresponding Dirac operator on the complex line bundle $\cL  \otimes K_M.$ 
Is it true that (replacing $\cL$ with a sufficiently high power $\cL^k$ if needed)
\[
  {\rm dim}( \ker \nabla \cap \ker \nabla^* )\;=\; \mathrm{ind}(\bar\partial_J),
\]
where $\nabla^*$ is the adjoint in the metric arising from $\omega$ and $J$?
\end{question}

A more speculative question is:

\begin{question}
Is there some way of defining $Q(M)$ without a choice of almost complex structure, which
will coincide with our definition in the Kahler case?
\end{question}

One path to answering this question would be further study of the twisted analog of the symplectic Hodge theory of
\cite{B,G,M, TY} in the
prequantum case.
The van Hove theorem indicates that such a quantization must involve some choices or have
some defects.  
\subsection{Real polarizations}We may obtain some geometric insight into these questions by considering what happens
when $M$ is equipped with a real polarization, that is, a Lagrangian foliation whose leaves are the fibres of a map $\pi\colon M\to B$ for
some $B$. In this case the integral leaves are given by $\pi^{-1}(b)$, where $b\in B_{BS}\subset B$, and $B_{BS}$ is a finite subset of $B.$  These leaves may be
expected to give linearly independent elements of a putative quantization.  To be concrete,
in the case where $M$ is also Kahler, and $(\cL,\nabla)$ holomorphic, we may expect the
$L_2$ projection of the de Rham currents corresponding to these leaves $\pi^{-1}(b)$, $b\in B_{BS}$,
with their covariant constant sections of $\mathcal{L}|_{\pi^{-1}(b)}$, considered as differential forms
lying in some negative Sobolev space, to the harmonic
forms, to be a basis for $Q(M).$  The fact that, at least for a sufficiently positive
line bundle, we obtain the holomorphic sections of that bundle, tensored with the canonical bundle, is an
indication that only integral Lagrangian leaves of a real polarization contribute to the
quantization, rather than the integral isotropic leaves that show up in such foliations more generally.  

\subsection{The case of toric varieties} A special case which may serve as a paradigm for what we may hope for in a general theorem is that of toric varieties, where we may compare quantization in a real polarization with holomorphic quantization, and see how the fact that $Q(M)$ is given by $H^0(M, \cL^k \otimes K_M),$ with the appearance of the canonical bundle, corresponds to a restriction of the real quantization to Lagrangian (not merely isotropic) Bohr Sommerfeld fibres.  

\def\bL{\mathbb L}
A toric variety\footnote{We assume integrality.} $(M^{2m},\omega,\cL,\nabla)$ is a Kahler manifold equipped with a holomorphic bundle $\cL$ with Chern connection $\nabla$ of curvature $\omega$ and with an effective Hamiltonian action of a compact torus $T^m$ of half the dimension of $M,$ which lifts to $\cL.$  The image $\cP = \mu(M)$ of the moment map $\mu: M\to {\mathfrak t}^*$ is a convex polytope, whose vertices lie in the integral lattice $\bL \subset {\mathfrak t}^*$ given by the weights of the group $T^m.$  The moment map foliates $M$ by isotropic submanifolds $\mu^{-1}(t),$ $t \in {\mathfrak t}^*,$ given by the orbits of the torus $T^m.$ This is a real polarization of $M.$ These isotropic tori are Lagrangian when $t \in {\rm Int}(\cP).$  

The quantization of $M$ in this real polarization is given by the distributional covariant constant sections of $\cL$ supported on the integral isotropic leaves of the foliation, given in this case by those fibres $\mu^{-1}(t)$ of the map $\mu$ where $t \in \bL.$  The subspace of the quantization corresponding to the Lagrangian leaves only has as a basis the covariant constant sections of $\cL$ supported on those Bohr-Sommerfeld leaves given by $\mu^{-1}(t)$ where $t \in  \bL \cap {\rm Int}(\cP).$

The relation of this real polarization picture to the Kahler quantization of $M$ is given by the fact that (see \cite{fulton})

$${\rm dim}~H^0(M,\cL) = \#(\cP \cap \bL)$$

\noindent while (see e.g. \cite{aw})

$${\rm dim}~H^0(M,\cL\otimes K_M ) = \#( {\rm Int}(\cP) \cap \bL).$$

Our Theorem \ref{kahlergq}, in the general case of any prequantum system associated to a Kahler manifold, gives a relation between the quantization $Q(M),$ which lives in the world of the dual category of Lagrangian submanifolds, and the holomorphic quantization $H^0(M,\cL^k \otimes K_M),$ mirroring the direct computation in the toric case.  The fact that Lagrangian (not general isotropic) quantization gives the holomorphic sections of the polarizing bundle, {\it tensored with $K_M,$} is therefore what we would expect from our study of the toric case.  A similar exclusion of isotropic but non-Lagrangian leaves in the context of quantization in a real polarization was found by
Hamilton \cite{ham}, who studied the sheaf cohomology $H^*(M,{\mathcal S})$ for locally toric prequantum systems $(M,\omega,\cL,\nabla)$ equipped with a real polarization, where ${\mathcal S}$ is the sheaf of fibrewise covariant constant sections of $\cL,$ as inspired by the results of Sniatycki for abelian varieties. 

To pose a concrete question, recall that Lagrangian Bohr-Sommerfeld leaves of a real polarization give currents dual to elements of our category.  Considering these currents as differential forms lying in some Sobolev space of distributions, we may, as we noted above, project them onto the Harmonic forms giving $Q(M),$ since those lie in any Sobolev space.   So we may consider these "Bohr-Sommerfeld currents" as elements of $Q(M).$  The many examples in which real and holomorphic polarizations yield isomorphic quantizations motivate the following question:\footnote{In many of these examples, the real quantization counts also the isotropic, non Lagrangian leaves of the polarization, and the Kahler quantization does not have the factor of $ \otimes K_M.$  We expect that the question is more natural if we exclude the non-Lagrangian Bohr-Sommerfeld leaves, and include the tensor product with the canonical bundle.}

\begin{question}
Let $(M,\omega,\cL,\nabla)$ be a prequantum system, where $M$ is a Kahler manifold, $\cL$ a holomorphic line bundle, and $\nabla$ the Chern connection.  Suppose $M$ also has a real polarization.  Do the currents corresponding to the Lagrangian Bohr--Sommerfeld leaves of the real polarization, projected to the harmonic forms, give a basis for $Q(M)$?
\end{question}

This projection of a distributional section of $\cL$ supported on some Bohr Sommerfeld Lagrangian to the harmonics should smear out to a $\Delta-$harmonic element of $\Omega^m(M,\cL),$ or equivalently, to a holomorphic
section of $\cL\otimes K_M,$ whose maximum modulus we would expect \cite{Guilleminpc} to lie
along the Lagrangian.


\begin{thebibliography}{99}

\bibitem[AW]{aw}Agapito, Jose; Weitsman, Jonathan. The weighted Euler-Maclaurin formula for a simple integral polytope.  
Asian J. Math. 9, No. 2, 199-211 (2005)
 
\bibitem[B]{B} J.-L.~Brylinski, A differential complex for Poisson manifolds.   {\em J. Diff. Geom.} 28, 93-114 (1988)
\bibitem[CW]{CW} Crooks, P., Weitsman, J., The double Gelfand-Cetlin system, invariance of polarization, and the Peter-Weyl theorem. Journal of Geometry and Physics, 194 (2023)
\bibitem[D]{demailly} J.-P. Demailly, {\it Complex analytic and differential geometry.}  Online text, retrieved from  \url{https://people.math.harvard.edu/~demarco/Math274/Demailly\_ComplexAnalyticDiffGeom.pdf}

\bibitem[FSS]{fss} L. Frappat, P. Sorba, A. Sciarrino, Dictionary on Lie Superalgebras.  arXiv:hep-th/9607161.
\bibitem[F]{fulton} W. Fulton, Introduction to Toric Varieties.  Princeton University Press.
\bibitem[GH]{gh} P. Griffiths, J. Harris.  {\it Principles of Algebraic Geometry.}  Wiley, 1978
\bibitem[G1]{G} V.~Guillemin, Symplectic hodge theory and the $d\delta$ lemma.  Preprint.
\bibitem[G2]{Guilleminpc} V.~Guillemin, personal communication.
\bibitem[GP]{GP} V.~Guillemin and A.~Pollack, \emph{\it Differential Topology}, Prentice-Hall, 1974.
\bibitem[GS1]{GSGC} Guillemin, Victor; Sternberg, Shlomo.  The Gel'fand-Cetlin system and quantization of the complex flag manifolds, Journal of Functional Analysis, 52 (1): 106-128 (1983)
 \bibitem[GS2]{GS} V.~Guillemin and S.~Sternberg, \emph{\it Semiclassical Analysis.}  International Press, 2013
\bibitem[GUW]{GUW} V.~Guillemin, A.~Uribe, and Z.~Wang, Integral representations of isotropic semiclassical functions and applications.  {\em J. Spectr. Theory} 12 (2022), no.1, 227-258.
\bibitem[H]{ham} M. Hamilton, Locally toric manifolds and singular Bohr-Sommerfeld leaves.  arXiv:0709.4058;
Mem. Amer. Math. Soc. 207 (2010), no. 971
\bibitem[JW]{JW} L.~Jeffrey and J.~Weitsman.  Bohr-Sommerfeld orbits in the moduli space of flat connections and the Verlinde dimension formula. Comm. Math. Phys. 150(3): 593-630 (1992). 
\bibitem[Ka]{kac} V. Kac. Lie Superalgebras.  {\it Adv. Math.} 26, 8-96 (1977)
\bibitem[Ki]{K} Kitchloo, N. The stable symplectic category and quantization.  arXiv:1204.5720; Contemporary Math. (AMS), Vol. 620, 251-280, 2014.
\bibitem[M]{M} O.~Mathieu, Harmonic cohomology classes of symplectic manifolds.  {\em Comment. Math. Helv.} 70, 1-9 (1995)
\bibitem[S]{S} Jedrzej \'Sniatycki, 
\emph{On cohomology groups appearing in geometric quantization},
in: Lecture Notes in Math.\ \textbf{570}, Springer-Verlag, Berlin, 1977, pp.~46--66.
\bibitem[TY]{TY} L.-S.~Tseng and S.-T.~Yau, Cohomology and Hodge theory on symplectic manifolds I.
J. Differential Geom. 91(3): 383-416 (July 2012).

--Cohomology and Hodge theory on symplectic manifolds II.  {\em J. Diff. Geom.} 91, 417-444 (2012)

\bibitem[WW]{WW} K. Wehrheim, C. Woodward.  Functoriality for Lagrangian correspondences in Floer theory.  arXiv:0708.2851
 \bibitem[W]{Weinstein} A.~Weinstein, \emph{Lectures on Symplectic Manifolds},  American Mathematical Society Regional Conference Series in Mathematics, Number 29, 1977.
  
  -- Symplectic Geometry. Bulletin of the American Mathematical Society, 5 (1981), 1-13
  
  -- Symplectic Categories.  arXiv 0911.4133.
  
  \bibitem[Wel]{Wells}  R.O. Wells, {\it Differential analysis on complex manifolds.}  Springer, 1980.

\end{thebibliography}
\end{document}